\documentclass[11pt,twoside,a4paper]{article}
\usepackage{color,amscd,amsmath,amssymb,amsthm,latexsym,stmaryrd,authblk,mathabx,shuffle,hyperref,tikz,tkz-tab} 
\usepackage[latin1]{inputenc}
\usepackage[T1]{fontenc}   
\usepackage[english]{babel}
\setlength{\textwidth}{16cm}
\setlength{\textheight}{25cm}
\topmargin = -25mm
\oddsidemargin = -1mm
\evensidemargin = 0mm
\providecommand{\keywords}[1]{\textbf{\textit{Keywords.}} #1}
\providecommand{\AMSclass}[1]{\textbf{\textit{AMS classification.}} #1}

\newcommand{\rond}[1]{*++[o][F-]{#1}}
\newcommand{\opas}{\mathbf{As}_\Omega^2}

\newcommand{\K}{\mathbb{K}}
\newcommand{\Z}{\mathbb{Z}}

\newcommand{\dimK}{\mathrm{dim}_\K}

\newcommand{\cat}{\mathrm{cat}}
\newcommand{\sch}{\mathrm{schr}}
\newcommand{\as}{\mathbf{As}}
\newcommand{\eas}{\mathbf{EAS}}
\newcommand{\bfP}{\mathbf{P}}
\newcommand{\id}{\mathrm{Id}}
\newcommand{\sym}{\mathrm{Sym}}

\newcommand{\bfF}{\mathbf{F}}

\input{xy}
\xyoption{all}



\newcommand{\bdtroisun}{
\begin{tikzpicture}[line cap=round,line join=round,>=triangle 45,x=0.2cm,y=0.2cm]
\clip(-3,0.) rectangle (1.2,4.);
\draw [line width=.5pt] (0.,0.)-- (0.,1.);
\draw [line width=.5pt] (0.,1.)-- (-1.,2.);
\draw [line width=.5pt] (0.,1.)-- (1.,2.);
\draw [line width=.5pt] (-1.,2.)-- (0.,3.);
\draw [line width=.5pt] (-1.,2.)-- (-2.,3.);
\draw(-0.7,1.) node {\tiny 1};
\draw(-1.5,1.7) node {\tiny 2};
\end{tikzpicture}}
\newcommand{\bdtroisdeux}{
\begin{tikzpicture}[line cap=round,line join=round,>=triangle 45,x=0.2cm,y=0.2cm]
\clip(-2.4,0.) rectangle (2.2,4.);
\draw [line width=.5pt] (0.,0.)-- (0.,1.);
\draw [line width=.5pt] (0.,1.)-- (-1.,2.);
\draw [line width=.5pt] (0.,1.)-- (1.,2.);
\draw [line width=.5pt] (1.,2.)-- (2.,3.);
\draw [line width=.5pt] (1.,2.)-- (0.,3.);
\draw(0.7,1.) node {\tiny 1};
\draw(1.5,1.7) node {\tiny 2};
\end{tikzpicture}}
\newcommand{\bdquatreun}{
\begin{tikzpicture}[line cap=round,line join=round,>=triangle 45,x=0.2cm,y=0.2cm]
\clip(-3.2,0.) rectangle (1.2,4.);
\draw [line width=.5pt] (0.,0.)-- (0.,1.);
\draw [line width=.5pt] (0.,1.)-- (-1.,2.);
\draw [line width=.5pt] (0.,1.)-- (1.,2.);
\draw [line width=.5pt] (-1.,2.)-- (0.,3.);
\draw [line width=.5pt] (-1.,2.)-- (-2.,3.);
\draw [line width=.5pt] (-3.,4.)-- (-2.,3.);
\draw [line width=.5pt] (-2.,3.)-- (-1.,4.);
\draw(-0.7,1.) node {\tiny 1};
\draw(-1.5,1.7) node {\tiny 2};
\draw(-2.3,2.4) node {\tiny 3};
\end{tikzpicture}}
\newcommand{\bdquatredeux}{
\begin{tikzpicture}[line cap=round,line join=round,>=triangle 45,x=0.2cm,y=0.2cm]
\clip(-2.2,0.) rectangle (1.2,4.);
\draw [line width=.5pt] (0.,0.)-- (0.,1.);
\draw [line width=.5pt] (0.,1.)-- (-1.,2.);
\draw [line width=.5pt] (0.,1.)-- (1.,2.);
\draw [line width=.5pt] (-1.,2.)-- (-2.,3.);
\draw [line width=.5pt] (-1.,2.)-- (0.,3.);
\draw [line width=.5pt] (-1.,4.)-- (0.,3.);
\draw [line width=.5pt] (0.,3.)-- (1.,4.);
\draw(-0.7,1.) node {\tiny 1};
\draw(-1.5,1.7) node {\tiny 2};
\draw(0.1,2.4) node {\tiny 3};
\end{tikzpicture}}
\newcommand{\bdquatretrois}{
\begin{tikzpicture}[line cap=round,line join=round,>=triangle 45,x=0.2cm,y=0.2cm]
\clip(-1.2,0.) rectangle (2.2,4.);
\draw [line width=0.4pt] (0.,0.)-- (0.,1.);
\draw [line width=0.4pt] (0.,1.)-- (-1.,2.);
\draw [line width=0.4pt] (0.,1.)-- (1.,2.);
\draw [line width=0.4pt] (1.,2.)-- (2.,3.);
\draw [line width=0.4pt] (1.,2.)-- (0.,3.);
\draw [line width=0.4pt] (-1.,4.)-- (0.,3.);
\draw [line width=0.4pt] (0.,3.)-- (1.,4.);
\draw(0.7,1.) node {\tiny 1};
\draw(1.5,1.7) node {\tiny 2};
\draw(-0.1,2.4) node {\tiny 3};
\end{tikzpicture}}
\newcommand{\bdquatrequatre}{
\begin{tikzpicture}[line cap=round,line join=round,>=triangle 45,x=0.2cm,y=0.2cm]
\clip(-1.2,0.) rectangle (3.2,4.);
\draw [line width=.5pt] (0.,0.)-- (0.,1.);
\draw [line width=.5pt] (0.,1.)-- (-1.,2.);
\draw [line width=.5pt] (0.,1.)-- (1.,2.);
\draw [line width=.5pt] (1.,2.)-- (0.,3.);
\draw [line width=.5pt] (1.,2.)-- (2.,3.);
\draw [line width=.5pt] (1.,4.)-- (2.,3.);
\draw [line width=.5pt] (2.,3.)-- (3.,4.);
\draw(0.7,1.) node {\tiny 1};
\draw(1.5,1.7) node {\tiny 2};
\draw(2.3,2.4) node {\tiny 3};
\end{tikzpicture}}
\newcommand{\bdquatrecinq}{
\begin{tikzpicture}[line cap=round,line join=round,>=triangle 45,x=0.2cm,y=0.2cm]
\clip(-2.2,0.) rectangle (2.2,3.);
\draw [line width=.5pt] (0.,0.)-- (0.,1.);
\draw [line width=.5pt] (0.,1.)-- (-2.,3.);
\draw [line width=.5pt] (0.,1.)-- (1.,2.);
\draw [line width=.5pt] (1.,2.)-- (2.,3.);
\draw [line width=.5pt] (1.,2.)-- (0.5,3.);
\draw [line width=.5pt] (-0.5,3.)-- (-1.,2.);
\draw(-0.7,1.) node {\tiny 1};
\draw(-1.5,1.7) node {\tiny 2};
\draw(1.5,1.7) node {\tiny 3};
\end{tikzpicture}}



\title{Generalized associative algebras}
\date{}
\author{Lo\"\i c Foissy}
\affil{\small{Univ. Littoral Côte d'Opale, UR 2597
LMPA, Laboratoire de Mathématiques Pures et Appliquées Joseph Liouville
F-62100 Calais, France}.\\ Email: \texttt{foissy@univ-littoral.fr}}

\theoremstyle{plain}
\newtheorem{theo}{Theorem}[section]
\newtheorem{lemma}[theo]{Lemma}
\newtheorem{cor}[theo]{Corollary}
\newtheorem{prop}[theo]{Proposition}
\newtheorem{defi}[theo]{Definition}

\theoremstyle{remark}
\newtheorem{remark}{Remark}[section]
\newtheorem{notation}{Notations}[section]
\newtheorem{example}{Example}[section]

\begin{document}

\maketitle

\begin{abstract}
We study diverse parametrized versions of the operad of associative algebra, where the parameter are taken
in an associative semigroup $\Omega$ (generalization of matching or family associative algebras) 
or in its cartesian square (two-parameters associative algebras). 
We give a description of the free algebras on these operads, study their formal series and prove that they are Koszul
when the set of parameters is finite. We also study operadic morphisms between the operad of classical associative algebras
and these objects, and links with other types of algebras (diassociative, dendriform, post-Lie$\ldots$).
\end{abstract}

\keywords{Family associative algebras, matching associative algebras, two-parameters associative algebras,
associative semigroups.}\\

\AMSclass{16S10 18M60 20M75 16W99}

\tableofcontents

\section*{Introduction}

Recently, numerous parametrization of well-known operads were introduced. Choosing a set of parameters $\Omega$,
any product defining the considered operad is replaced by a bunch of products indexed by $\Omega$
and various relations are defined on them, mimicking the relations defining the initial operads.
One can for example require that any linear span of the parametrized products also satisfy the relations
of the initial operads: this is the \emph{matching} parametrization. For example, matching Rota-Baxter algebras,
associative, dendriform, prelie algebras are introduced in \cite{ZhangGaoGuo,FoissyPrelie}.
Another way is to use  one or more semigroup structures on $\Omega$: this it the \emph{family} parametrization.
In this spirit, family Rota-Baxter algebras,  dendriform, prelie algebras are introduced and studied in
\cite{AguiarDendriform,ZhangGao,ZhangGaoManchon,ManchonZhang}. 
A way to obtain both these parametrizations for dendriform algebras is introduced in \cite{Foissydendriforme},
with the help of a generalization of diassociative semigroups, called extended diassociative semigroups (briefly, EDS).
Finally, a two-parameters version is given for dendriform algebras and prelie algebras is described in \cite{FoissyManchonZhang}.\\

Our aim in this paper is the study of these parametrizations for the operad of associative algebras,
which surprisingly did not receive a lot of attention for now. 
We start with two-parameters associative algebras \cite{FoissyManchonZhang}.
If $(\Omega,\rightarrow)$ is a semigroup,  an $\Omega$-two-parameters associative algebra is given products
$*_{\alpha,\beta}$, with $\alpha,\beta \in \Omega$, satisfying the following axiom:
\begin{align*}
(x*_{\alpha,\beta} y)*_{\alpha \rightarrow \beta,\gamma} z&=x*_{\alpha,\beta \rightarrow \gamma}(y*_{\beta,\gamma} z).
\end{align*}
When $(\Omega,*)=(\Z/2\Z,\times)$, the two-parameters $\Omega$-associative algebras were described
in \cite{ChapotonHivertNovelli}, as an operad on bicolored trees. 
When $\Omega$ is finite, the associated operad $\opas$ is finitely generated and quadratic. We prove that it is Koszul
(Proposition \ref{prop1.3}), and describe its Poincaré-Hilbert formal series $P(X)$:
if $|\Omega|=\omega>1$, then
\begin{align*}
P(X)&=\frac{1-\omega X-\sqrt{1+2\omega(1-2\omega)X+\omega^2X^2}}{2\omega(\omega-1)}\\
&=X+\omega^2X^2+(2\omega-1)\omega^3X^3+(5\omega^2-5\omega+1)\omega^4X^4\\
&+(2\omega-1)(7\omega^2-7\omega+1)\omega^5X^5+(42\omega^4-84\omega^3+56\omega^2-14\omega+1)\omega^6X^6
+\ldots
\end{align*}
We deduce a formula for the dimension $p_n(\omega)$ of $\opas(n)$ with the help of Narayana numbers,
(Corollary \ref{cor1.5}), as well as properties of $p_n(\omega)$, seen as a polynomial in $\omega$
(Corollary \ref{cor1.4}). 
We also give a combinatorial description of Koszul dual of $\opas$ in terms of words (Proposition \ref{prop1.7})
and a description of free ${\opas}^!$-algebras. \\

In the second and third parts of this paper, we introduce and study $\Omega$-associative algebras.
Here, the set of parameters $\Omega$ is given two operations $\rightarrow$ and $\triangleright$.
An $\Omega$-associative algebra is given bilinear products $*_\alpha$, with $\alpha \in \Omega$,
with the following axioms:
\[x*_\alpha (y *_\beta z)=(x*_{\alpha \triangleright \beta} y)*_{\alpha \rightarrow \beta} z.\]
In order to have a suitable parametrised operad, we impose that free $\Omega$-associative algebras are of the form
\[T_A(V)=\bigoplus_{n=1}^\infty \K\Omega^{\otimes (n-1)}\otimes V^{\otimes n}.\]
Tensors of $\K\Omega^{\otimes (n-1)}\otimes V^{\otimes n}$ will be called $A$-typed words of length $n$
and will be denoted $\alpha_1\ldots \alpha_{n-1}v_1\ldots v_n$. 
We impose that the products $*_\alpha$ satisfy, among other conditions, that for any $v_1,v_2\in V$, 
\[v_1*_\alpha v_2=\alpha v_1 v_2.\]
We prove in Theorem \ref{theo3.2} that this holds if, and only if, the triple $(\Omega,\rightarrow,\triangleright)$
satisfies the following axioms:
\begin{align*}
\alpha \rightarrow (\beta \rightarrow \gamma)&=(\alpha \rightarrow \beta) \rightarrow \gamma,\\
 (\alpha \triangleright (\beta \rightarrow \gamma))\rightarrow (\beta \triangleright \gamma)
&=(\alpha \rightarrow \beta)\triangleright \gamma,\\
(\alpha \triangleright (\beta \rightarrow \gamma))\triangleright (\beta \triangleright \gamma)
&=\alpha \triangleright \beta.
\end{align*}
Such a triple $(\Omega,\rightarrow,\triangleright)$ will be called an extended associative semigroup (briefly, EAS). 
For example:
\begin{itemize}
\item If $\Omega$ is a set, its trivial EAS structure is given, for any $\alpha,\beta\in \Omega$,
\[\alpha \rightarrow\beta=\beta \triangleright \alpha=\beta.\]
In this case, the $\Omega$-family algebras are the matching associative algebras \cite{ZhangGaoGuo};
the particular case when $\Omega$ contains two elements appears also in \cite{Pirashvili}.
The underlying operads are also used in \cite{CombeGiraudo}.
\item If $(\Omega,\rightarrow)$ is a semigroup, one can make it an EAS with, 
for any $\alpha,\beta\in \Omega$,
\[\alpha \triangleright\beta= \alpha.\]
In this case, the $\Omega$-family algebras are the family associative algebras of \cite{ZhangGao}.
\item If $(\Omega,\star)$ is a group, one can make it an EAS with, for any $\alpha,\beta\in \Omega$,
\begin{align*}
\alpha \rightarrow \beta&=\beta,&\alpha \triangleright \beta=\alpha\star \beta^{\star -1}.
\end{align*}
\end{itemize}
We give more examples of EAS, including a classification of EAS of cardinality two, in the second section.
We in fact generalize these results in a linear setting: we first observe that if $(\Omega,\rightarrow,\triangleright)$
is a set with two operations, we consider the map:
\begin{align*}
\phi&:\left\{\begin{array}{rcl}
\Omega^2&\longrightarrow&\Omega^2\\
(\alpha,\beta)&\longrightarrow&(\alpha \rightarrow \beta,\alpha \triangleright \beta),
\end{array}\right.
\end{align*}
Then $(\Omega,\rightarrow,\triangleright)$ is an EAS if, and only if:
\begin{align*}
(\id \times \phi)\circ (\phi\times \id)\circ (\id \times \phi)&=(\phi \times \id)\circ (\id \times \tau)\circ (\phi\times \id),
\end{align*}
where $\tau:\Omega^2\longrightarrow\Omega^2$ is the usual flip:
\begin{align*}
\tau&:\left\{\begin{array}{rcl}
\Omega^2&\longrightarrow&\Omega^2\\
(\alpha,\beta)&\longrightarrow&(\beta,\alpha).
\end{array}\right.
\end{align*}
This can easily be generalized in the category of vector spaces: a linear extended associative semigroup (briefly, $\ell$EAS)
is a pair $(A,\Phi)$, where $\Phi:A\otimes A\longrightarrow A\otimes A$ is a linear map such that:
\begin{align*}
(\id \otimes \Phi)\circ (\Phi\otimes \id)\circ (\id \otimes \Phi)&=(\Phi \otimes \id)\circ (\id \otimes \tau)\circ (\Phi\otimes \id),
\end{align*}
where $\tau:A\otimes A\longrightarrow A\otimes A$ is the usual flip.
In particular, if $(\Omega,\rightarrow,\triangleright)$ is an EAS, then its algebra $\K\Omega$ is an $\ell$EAS.
We then introduce the notion of $\Phi$-associative algebra (Definition \ref{defialgebras})
and we describe  free $\Phi$-associative algebras $T_\Phi(V)$  in term of tensor algebras in Theorem \ref{theo3.2}.
In particular, as a vector space,
\[T_\Phi(V)=\bigoplus_{n=1}^\infty A^{\otimes(n-1)}\otimes V^{\otimes n}.\]
We prove in Proposition \ref{prop3.3} that if $V$ is a $\Phi$-associative algebra, 
then $V\otimes A$ is naturally an associative algebra;
if $\Phi$ is invertible, we prove conversely that any convenient associative product
 on $V\otimes A$ gives rise to a $\Phi$-associative algebra structure on $V$.
Following these results, we study the algebra structure of $T_\Phi(V)\otimes A$ and,
if $\Phi$ is invertible, we prove that  it is freely generated by $V\otimes A$
(Proposition \ref{prop3.4}). 

The description of free $\Phi$-algebras induce a combinatorial description of the operad $\as_\Phi$
of $\Phi$-associative algebras (Proposition \ref{prop3.5}). We prove that, when $A$ is finite-dimensional, 
that the operad $\as_\Phi$ is Koszul, and that its Koszul dual is the operad of $\as_{\Phi^*}$-algebras,
generalizing  a well-known result for the operad $\as$ of "classical" associative algebras (Proposition \ref{prop3.6}
and Theorem \ref{theo3.7}). We study operad morphisms between the operad of associative algebras
and $\as_\Phi$, which is related to eigenvectors of $\Phi$ (Proposition \ref{propassociative}).
We then give results on operadic maps between the operads $\as$ and  $\as_\Phi$, and between the operads 
$\opas$ and $\as_\Phi$ (Propositions \ref{prop3.14} and \ref{prop3.15}).
The paper ends with various links with other types of algebras, such that diassociative, post-Lie, dendriform, tridendriform
or duplicial algebras, and their Koszul duals. \\

\textbf{Acknowledgements}. 
The author acknowledges support from the grant ANR-20-CE40-0007
\emph{Combinatoire Algébrique, Résurgence, Probabilités Libres et Opérades}.\\

\begin{notation} 
Let $\K$ be a commutative field. Any vector space in this text will be taken over $\K$.
\end{notation}

\section{Two-parameters $\Omega$-associative algebras}

\begin{notation}
In all this section, $(\Omega,\rightarrow)$ is an associative semigroup.
\end{notation}

\subsection{Definition}

In the spirit of the notion of two-parameters dendriform or duplicial algebras of \cite{FoissyManchonZhang},
we now introduce the notion of two-parameters associative algebras, which can be found in \cite{AguiarDendriform}:

\begin{defi}
A two-parameters $\Omega$-associative algebra is a family $(V,(*_{\alpha,\beta})_{\alpha,\beta \in \Omega})$,
where $V$ is a vector space and,  for any $(\alpha,\beta)\in \Omega^2$, $*_{\alpha,\beta}:V\otimes V\longrightarrow V$
is a linear map such that:
\begin{align*}
&\forall \alpha,\beta,\gamma \in \Omega,\: \forall x,y,z\in V,&
(x*_{\alpha,\beta} y)*_{\alpha \rightarrow \beta,\gamma} z&=x*_{\alpha,\beta \rightarrow \gamma}(y*_{\beta,\gamma} z).
\end{align*}
\end{defi}

\begin{remark}
If $|\Omega|=1$, $\Omega$-associative algebras are associative algebras.
\end{remark}

Such a structure is related to $(\Omega,\rightarrow)$-graded associative products on $V\otimes \K\Omega$. 
For the sake of simplicity, we shall denote
the tensor product $x\otimes \alpha$, with $x\in V$ and $\alpha \in \Omega$, by $x\alpha$. 

\begin{prop} \label{prop1.2}
Let $V$ be a vector space, endowed with bilinear products $*_{\alpha,\beta}$ for any $(\alpha,\beta)\in \Omega^2$.
We define a product $*$ on $V\otimes \K\Omega$ by:
\begin{align*}
&\forall x,y\in V,\: \forall \alpha,\beta \in \Omega,&x\alpha *y\beta&=x*_{\alpha,\beta} y(\alpha \rightarrow \beta).
\end{align*}
Then $*$ is associative if, and only if, $(V,(*_{\alpha,\beta})_{\alpha,\beta \in \Omega})$ is a
two-parameters $\Omega$-associative algebra.
\end{prop}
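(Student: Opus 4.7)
The plan is to compute both sides of the associativity axiom $(x\alpha * y\beta) * z\gamma = x\alpha * (y\beta * z\gamma)$ for elementary tensors and compare them termwise in the $V$ and $\mathbb{K}\Omega$ factors. Since $*$ is bilinear on $V\otimes \mathbb{K}\Omega$ and both sides are linear in each variable, associativity for all elements of $V\otimes \mathbb{K}\Omega$ is equivalent to associativity on the spanning set $\{x\alpha\mid x\in V,\,\alpha\in \Omega\}$, so it suffices to work with such elementary tensors.

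First I would expand the left-hand side using the definition twice:
\begin{align*}
(x\alpha * y\beta) * z\gamma &= \bigl((x*_{\alpha,\beta} y)(\alpha \rightarrow \beta)\bigr) * z\gamma\\
&= (x*_{\alpha,\beta} y) *_{\alpha \rightarrow \beta,\gamma} z \cdot \bigl((\alpha \rightarrow \beta)\rightarrow \gamma\bigr),
\end{align*}
and symmetrically the right-hand side:
\begin{align*}
x\alpha * (y\beta * z\gamma) &= x\alpha * \bigl((y*_{\beta,\gamma} z)(\beta \rightarrow \gamma)\bigr)\\
&= x *_{\alpha,\beta \rightarrow \gamma}(y*_{\beta,\gamma} z) \cdot \bigl(\alpha \rightarrow (\beta \rightarrow \gamma)\bigr).
\end{align*}
Because $(\Omega,\rightarrow)$ is an associative semigroup, the $\mathbb{K}\Omega$-components $(\alpha \rightarrow \beta)\rightarrow \gamma$ and $\alpha \rightarrow (\beta \rightarrow \gamma)$ coincide automatically. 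Thus associativity of $*$ reduces to the equality of the $V$-components, which is precisely
\[(x*_{\alpha,\beta} y) *_{\alpha \rightarrow \beta,\gamma} z = x *_{\alpha,\beta \rightarrow \gamma}(y*_{\beta,\gamma} z).\]

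For the forward direction I would run this equivalence with arbitrary $\alpha,\beta,\gamma\in \Omega$ and $x,y,z\in V$ to recover the defining axiom. For the backward direction, the same chain of equalities shows associativity holds on elementary tensors, and bilinearity gives it on all of $V\otimes \mathbb{K}\Omega$. There is no real obstacle here: the only subtle point is noting that the two elementary tensors appearing as the final products already share their $\mathbb{K}\Omega$-component thanks to associativity of $\rightarrow$, so equality of elementary tensors in $V\otimes \mathbb{K}\Omega$ reduces cleanly to equality of their $V$-parts.
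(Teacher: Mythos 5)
Your proposal is correct and follows essentially the same route as the paper: expand both sides of associativity on elementary tensors, note that the $\K\Omega$-components agree by associativity of $\rightarrow$, and conclude that associativity of $*$ is equivalent to the two-parameters axiom. The extra remarks on bilinearity and on cancelling the common (nonzero) basis element $\alpha\rightarrow\beta\rightarrow\gamma$ are exactly the details the paper leaves implicit in its ``the result is then immediate.''
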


\begin{proof}
For any $x,y,z\in V$, any $\alpha,\beta,\gamma \in \Omega$:
\begin{align*}
(x\alpha * y\beta)*z\gamma
&=(x*_{\alpha,\beta} y)*_{\alpha \rightarrow \beta,\gamma} z (\alpha \rightarrow\beta\rightarrow \gamma),\\
x\alpha *(y\beta*z\gamma)
&=x*_{\alpha,\beta \rightarrow \gamma}(y*_{\beta,\gamma} z)(\alpha \rightarrow\beta\rightarrow \gamma).
\end{align*}
The result is then immediate. \end{proof}

\subsection{The operad of two-parameters $\Omega$-associative algebras}

We refer to \cite{LodayVallette,Markl,MarklShnider,Mendez,Yau} for notations and usual results on operads.

\begin{notation}
We denote by $\opas$ the nonsymmetric operad of two-parameters $\Omega$-associative algebras.
It is generated by $*_{\alpha,\beta} \in \opas(2)$, with $\alpha,\beta \in \Omega$, and the relations
\begin{align*}
&\forall \alpha,\beta,\gamma \in \Omega,&
*_{\alpha \rightarrow\beta,\gamma}\circ_1 *_{\alpha,\beta}=*_{\alpha,\beta \rightarrow \gamma}\circ_2*_{\beta,\gamma}.
\end{align*}
\end{notation}

We assume in this section that $\Omega$ is finite, of cardinality denoted by $\omega$. Then the components of $\opas$
are finite-dimensional, and the following Proposition allows to inductively compute their dimension:

\begin{prop}\label{prop1.3}
The operad $\opas$ is Koszul.
For any $n\geqslant 1$, let us put $p_n=\dimK(\opas(n))$ and
\[P(X)=\sum_{n=1}^\infty p_n X^n \in \mathbb{Q}[[X]].\]
Then: 
\begin{align}
\label{EQ2} p_n&=\omega (\omega-1)\sum_{k=1}^{n-1}p_kp_{n-k}+\omega p_{n-1},
\end{align}
or equivalently, if $|\omega|\geqslant 2$:
\begin{align}
\label{EQ1} 
P(X)&=\frac{1-\omega X-\sqrt{1+2\omega(1-2\omega)X+\omega^2X^2}}{2\omega(\omega-1)}.
\end{align}
\end{prop}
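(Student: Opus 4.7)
The plan is to establish Koszulness and the recursion \eqref{EQ2} simultaneously by exhibiting a convergent rewriting system on tree monomials (a PBW basis in the sense of \cite{LodayVallette}), and then to solve the induced functional equation for $P$. I orient the quadratic relation with the left-combed expression as leading monomial, giving the rewrite
\[
(x *_{\alpha,\beta} y) *_{\alpha\rightarrow\beta,\gamma} z \;\longrightarrow\; x *_{\alpha,\beta\rightarrow\gamma}(y *_{\beta,\gamma} z).
\]
Termination follows by assigning to a tree $T$ the weight $w(T)=\sum_N r(N)$, where $r(N)$ is the number of internal nodes contained in the left subtree of the internal node $N$: each application of the rule moves one internal node from a left subtree to a right subtree, strictly decreasing $w$. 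Normal forms are then the labeled planar binary trees such that, at every internal node $N$ whose left child $N_L$ is internal with labels $(\alpha',\beta')$, the left label of $N$ differs from $\alpha'\rightarrow\beta'$.

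Local confluence reduces to a single family of critical pairs in arity $4$, indexed by $(\alpha,\beta,\gamma,\delta)\in\Omega^4$: the fully left-combed tree $((x*_{\alpha,\beta} y)*_{\alpha\rightarrow\beta,\gamma} z)*_{(\alpha\rightarrow\beta)\rightarrow\gamma,\delta} w$ can be reduced at either the inner or the outer left-combed pair. A direct computation, using only the associativity of $\rightarrow$, shows that both orders lead to the same fully right-combed expression $x*_{\alpha,\beta\rightarrow\gamma\rightarrow\delta}(y*_{\beta,\gamma\rightarrow\delta}(z*_{\gamma,\delta} w))$, closing the diamond. By Newman's lemma the rewriting is confluent; normal forms therefore give a PBW basis and $\opas$ is Koszul.

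For the dimensions, I decompose a normal form of arity $n\geq 2$ according to the nature of the root's left child: if it is a leaf, the two root labels are unrestricted ($\omega^2$ choices) and the right subtree is an arbitrary normal form of arity $n-1$; if it is a normal form of arity $k\in\{2,\dots,n-1\}$ with root label $(\alpha',\beta')$, the forbidden value $\alpha'\rightarrow\beta'$ leaves $\omega(\omega-1)$ admissible label pairs for the root. This yields
\[
p_n=\omega^2 p_{n-1}+\omega(\omega-1)\sum_{k=2}^{n-1}p_kp_{n-k}.
\]
Since $p_1=1$, writing $\omega^2p_{n-1}=\omega p_{n-1}+\omega(\omega-1)p_1p_{n-1}$ recovers \eqref{EQ2}. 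Multiplying by $X^n$ and summing over $n\geq2$ gives the quadratic equation $P(X)-X=\omega XP(X)+\omega(\omega-1)P(X)^2$, and \eqref{EQ1} follows from the quadratic formula by selecting the root with $P(0)=0$. The main obstacle is the confluence check of step one; once that critical pair closes, Koszulness, the normal-form basis, and the dimensional recursion are obtained in one package.
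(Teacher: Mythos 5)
Your proposal is correct and follows essentially the same route as the paper: the same orientation of the quadratic relation (left comb rewrites to right comb), the same single family of critical monomials on the fully left-combed arity-$4$ trees, the same confluence check via associativity of $\rightarrow$, and the same count of normal forms according to whether the root's left child is a leaf, leading to the identical recursion and functional equation. The only addition is your explicit termination argument via the weight $w(T)$, which the paper leaves implicit.
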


\begin{proof}
we shall use the rewriting method of \cite{Dotsenko,LodayVallette}. 
We shall write elements of the free nonsymmetric operad generated by $\opas(2)$ as planar trees which vertices
are decorated by elements of $\Omega^2$. We will write indices on the vertices
on the trees and put the corresponding decorations between parentheses, 
and we delete the symbols $*$ in order to enlighten the notations. 
For example, the operadic tree
\[\xymatrix{\ar@{-}[rd]&&\ar@{-}[ld]&\\
&\rond{*_{\gamma,\delta}}\ar@{-}[rd]&&\ar@{-}[ld]\\
&&\rond{*_{\alpha,\beta}}\ar@{-}[d]&\\
&&&&}\]
will be shortly written $\bdtroisun((\alpha,\beta),(\gamma,\delta))$.
The rewriting rules are:
\[\bdtroisun((\alpha\rightarrow \beta,\gamma),(\alpha,\beta))\longrightarrow 
\bdtroisdeux((\alpha,\beta \rightarrow \gamma),(\beta,\gamma))\]
for any $\alpha,\beta,\gamma \in \Omega$. 
There is only one family of critical monomials, namely the monomials
\[\bdquatreun(((\alpha \rightarrow\beta)\rightarrow \gamma,
\delta), (\alpha \rightarrow\beta,\gamma), (\alpha,\beta)),\]
where $\alpha,\beta,\gamma \in \Omega$. 
Koszularity of $\opas$ comes from the confluence of the following diagram:
\[\xymatrix{&T_1\ar[rd] \ar[ld]&\\
T_2\ar[ddr]&&T_3 \ar[d]\\
&&T_4\ar[ld]\\
&T_5}\]
with:
\begin{align*}
T_1&=\bdquatreun(((\alpha \rightarrow\beta)\rightarrow \gamma,
\delta), (\alpha \rightarrow\beta,\gamma), (\alpha,\beta)),\\
T_2&=\bdquatrecinq((\alpha \rightarrow \beta,\gamma \rightarrow \delta),(\alpha,\beta),(\gamma,\delta)),\\
T_3&=\bdquatredeux((\alpha\rightarrow(\beta \rightarrow \gamma),\delta),(\alpha,\beta \rightarrow \gamma),
(\beta,\gamma)),\\
T_4&=\bdquatretrois((\alpha, (\beta \rightarrow \gamma)\rightarrow \delta), (\beta \rightarrow \gamma,\delta),
(\gamma,\delta)),\\
T_5&=\bdquatrequatre((\alpha,(\beta\rightarrow \gamma)\rightarrow \delta),(\beta,\gamma\rightarrow \delta),
(\gamma,\delta)),\\
&=\bdquatrequatre((\alpha,\beta\rightarrow (\gamma\rightarrow \delta)), (\beta,\gamma\rightarrow \delta),
(\gamma,\delta)).
\end{align*}
Hence, the operad $\opas$ is Koszul. Moreover, $\opas(n)$ has for basis the set of  rooted planar binary trees
with $n-1$ internal vertices decorated by $\Omega^2$, avoiding subtrees
\[\bdtroisun((\alpha\rightarrow \beta,\gamma),(\alpha,\beta)) \]
for any $\alpha,\beta,\gamma \in \Omega$.
For any planar binary tree $T$, let us denote by $p_T$ the number of decorations of the vertices of trees by elements of $\Omega^2$,
avoiding these subtrees. If $T$ is a planar binary tree different from the tree $\mid$ (which is the unit of the operad $\opas$), we denote
by $T_l$ the left subtree born from the root of $T$, by $T_r$ the right subtree born from the root of $T$,
and we write $T=T_l\vee T_r$. Then, looking at the possible decorations of the root:
\[p_T=p_{T_l}p_{T_r}\omega \times\begin{cases}
\omega \mbox{ if }T_2=\mid,\\
\omega-1 \mbox{ otherwise}.
\end{cases}\]
Hence, if $n\geqslant 2$, denoting by $\mathcal{T}_n$ the set of planar binary rooted trees with $n-1$ internal vertices:
\begin{align*}
\nonumber p_n&=\sum_{T\in \mathcal{T}_n} p_T\\
\nonumber &=\omega^2 \sum_{T\in \mathcal{T}_{n-1}}p_T+\omega(\omega-1)\sum_{k=2}^{n-1}
\sum_{T_l\in \mathcal{T}_k}\sum_{T_r\in \mathcal{T}_{n-k}}p_{T_l}p_{T_r}\\
\nonumber &=\omega^2 p_{n-1}+\omega(\omega-1)\sum_{k=2}^{n-1}p_kp_{n-k}\\
&=\omega (\omega-1)\sum_{k=1}^{n-1}p_kp_{n-k}+\omega p_{n-1},
\end{align*}
which gives (\ref{EQ2}). Summing over $n$, with $p_1=1$:
\begin{align}
\label{EQ3} P(X)&=\omega(\omega-1)P(X)^2+\omega XP(X)+X. 
\end{align}
if $\omega=1$, we obtain that
\[P(X)=XP(X)+X,\]
so
\[P(X)=\frac{X}{1+X}=\sum_{n=1}^\infty X^n,\]
recovering the formal series of the nonsymmetric operad of associative algebras. If $\omega\geqslant 2$,
solving (\ref{EQ3}), with the initial condition $P(0)=0$, we obtain (\ref{EQ1}). \end{proof}

\begin{example}
We obtain:
\begin{align*}
p_2(\omega)&=\omega^2,\\
p_3(\omega)&=(2\omega-1)\omega^3,\\
p_4(\omega)&=(5\omega^2-5\omega+1)\omega^4,\\
p_5(\omega)&=(2\omega-1)(7\omega^2-7\omega+1)\omega^5,\\
p_6(\omega)&=(42\omega^4-84\omega^3+56\omega^2-14\omega+1)\omega^6,\\
p_7(\omega)&=(2\omega-1)(66\omega^4-132\omega^3+84\omega^2-18\omega+1)\omega^7,\\
p_8(\omega)&=(429\omega^6-1287\omega^5+1485 \omega^4-825 \omega^3+225\omega^2-27\omega+1)\omega^8,\\
p_9(\omega)&=(2\omega-1)(715\omega^6-2145\omega^5+2431\omega^4-1287\omega^3+319\omega^2-33\omega+1)\omega^9.
\end{align*}
This gives:
\[\begin{array}{|c||c|c|c|c|c|c|c|c|c|}
\hline \omega\setminus n&1&2&3&4&5&6&7\\
\hline \hline 1&1&1&1&1&1&1&1\\
\hline 2&1&4&24&176&1440&12608&115584\\
\hline 3&1&9&135&2511&52245&1164213&27173475\\
\hline 4&1&16&448&15616&609280&25464832&1114882048\\
\hline 5&1&25&1125&63125&3965625&266890625&18816328125\\
\hline 6&1&36&2376&195696&18048096&1783238976&184576081536\\
\hline 7&1&49&4459&506611&64454845&8785674373&1254546699679\\
\hline 8&1&64&7680&1150976&193167360&34733293568&6542642380800\\
\hline 9&1&81&12393&2368521&506935665&116245810017&27925350157593\\
\hline \end{array}\]
\end{example}

\begin{remark}\begin{enumerate}
\item If $\omega=2$, the sequence $(p_n)_{n\geqslant 2}$ is referenced as \href{http://oeis.org/A156017}{A156017} 
in the \href{http://oeis.org/}{OEIS}. 
This is the sequence of dimensions of an operad given in \cite{ChapotonHivertNovelli}, 
generated by four products $\prec$, $\succ$, $\circ$  and $\odot$, with eight relations, see (21) in \cite{ChapotonHivertNovelli}. 
This is a special example of  a type of two-parameters $\Omega$-associative algebras, with $\Omega=(\Z/2\Z,\times)$ and:
\begin{align*}
*_{(\overline{0},\overline{0})}&=\circ,&
*_{(\overline{0},\overline{1})}&=\prec,&
*_{(\overline{1},\overline{0})}&=\succ,&
*_{(\overline{1},\overline{1})}&=\odot.
\end{align*}
Moreover:
\[P(X)_{\mid \omega=2}=\frac{1-2X-\sqrt{1-6(2X)+(2X)^2}}{4},\]
so for any $n\geqslant 2$, $p_n=2^{n-1} \sch_n$, where $\sch_n$ is the $n$-th large Schröder number (sequence 
\href{https://oeis.org/A006318}{A006318} in the \href{http://oeis.org/}{OEIS}):
\[\begin{array}{|c||c|c|c|c|c|c|c|c|c|c|}
\hline n&1&2&3&4&5&6&7&8&9&10\\
\hline \hline \sch_n&1&2&6&22&90&394&1806&8558&41586&206098\\
\hline\end{array}\]
\end{enumerate}
\end{remark}

\begin{cor} \label{cor1.4}
Let $n \geqslant 1$.
\begin{enumerate}
\item $p_n$ is a polynomial in $\Z[\omega]$, of degree $2n-2$. Its leading term is the $n$-th Catalan number $\cat_n$
(Sequence \href{https://oeis.org/A000108}{A000108} in the \href{http://oeis.org/}{OEIS}):
\[\begin{array}{|c||c|c|c|c|c|c|c|c|c|c|}
\hline n&1&2&3&4&5&6&7&8&9&10\\
\hline\hline \cat_n&1&1&2&5&14&42&132&429&1430&4862\\
\hline\end{array}\]
\item If $n\geqslant 2$, there exists a polynomial $q_n \in \Z[\omega]$, such that $p_n=\omega^n q_n$.
Moreover, $q_n(0)=(-1)^n$.
\item If $n$ is odd and $\geqslant 3$, then $p_n\left(\dfrac{1}{2}\right)=0$. 
\end{enumerate}
\end{cor}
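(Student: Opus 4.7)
The plan is to handle the three items in turn, relying on the explicit recurrence (\ref{EQ2}) for (1) and (2), and on the closed form (\ref{EQ1}) for (3).

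For (1), I would induct on $n$. The base $p_1 = 1$ has degree $0 = 2\cdot 1 - 2$ with leading coefficient $\cat_1 = 1$. In the inductive step, applying (\ref{EQ2}) and the induction hypothesis, each product $p_k p_{n-k}$ has degree $2n-4$ with leading coefficient $\cat_k \cat_{n-k}$; multiplication by $\omega(\omega-1)$ gives degree $2n-2$, with top coefficient $\sum_{k=1}^{n-1} \cat_k \cat_{n-k} = \cat_n$ by the usual Catalan recursion. The other summand $\omega p_{n-1}$ has degree $2n-3$, strictly smaller, and so does not affect the leading term. Integrality of all coefficients follows at each step from the recurrence since the coefficients $\omega$ and $\omega(\omega-1)$ are in $\Z[\omega]$.

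For (2), I would again induct on $n$, with base $p_2 = \omega^2$, i.e. $q_2 = 1$ and $q_2(0) = 1 = (-1)^2$. The key computational step is to isolate the contributions $k=1$ and $k=n-1$ in the recurrence, where $p_1 = 1$ carries only $\omega^{n-1}$ rather than $\omega^n$. Writing
\[\sum_{k=1}^{n-1} p_k p_{n-k} = 2p_{n-1} + \sum_{k=2}^{n-2} p_k p_{n-k} = 2\omega^{n-1} q_{n-1} + \omega^n \sum_{k=2}^{n-2} q_k q_{n-k},\]
one sees that, after multiplication by $\omega(\omega-1)$ and addition of $\omega p_{n-1} = \omega^n q_{n-1}$, the whole right-hand side of (\ref{EQ2}) becomes divisible by $\omega^n$. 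Factoring gives
\[q_n = (2\omega - 1)\, q_{n-1} + \omega(\omega-1) \sum_{k=2}^{n-2} q_k q_{n-k}.\]
Evaluating at $\omega = 0$ yields $q_n(0) = -q_{n-1}(0)$, so $q_n(0) = (-1)^n$ by induction.

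For (3), I would substitute $\omega = 1/2$ directly in (\ref{EQ1}). At this value the linear coefficient under the square root vanishes, since $2\omega(1 - 2\omega) = 0$, so the discriminant collapses to $1 + X^2/4$. Hence
\[P(X)_{\mid \omega = 1/2} = X + 2\left(\sqrt{1 + X^2/4} - 1\right),\]
which is the sum of $X$ and a power series in $X^2$. Therefore the coefficient of $X^n$ is zero for every odd $n \geqslant 3$, i.e. $p_n(1/2) = 0$. The main obstacle I anticipate is the bookkeeping in (2): one must carefully quarantine the endpoint terms $k=1$ and $k=n-1$ of the sum, and verify that the extra factor of $\omega$ in front of the whole sum exactly restores the missing power needed for divisibility by $\omega^n$.
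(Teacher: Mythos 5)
Your proposal is correct and follows essentially the same route as the paper: induction on the recurrence (\ref{EQ2}) for points 1 and 2 (including the same isolation of the $k=1$ and $k=n-1$ terms to get $q_n=(2\omega-1)q_{n-1}+\omega(\omega-1)\sum_{k=2}^{n-2}q_kq_{n-k}$ and hence $q_n(0)=-q_{n-1}(0)$), and specialization of the closed form at $\omega=1/2$ for point 3, where the series collapses to $X-2+\sqrt{4+X^2}$, which is $X$ plus an even series.
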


\begin{proof}
1. and 2. We proceed by induction on $n$. As $p_1=1$, this is obvious. 
Let us assume the result at all ranks $<n$, with $n\geqslant 2$. The induction hypothesis gives that the following is a polynomial
in $\Z[\omega]$, of degree $2n-2$:
\[\omega (\omega-1)\sum_{k=1}^{n-1}p_kp_{n-k}.\]
Its leading term is 
\[\sum_{k=1}^{n-1}\cat_k \cat_{n-k}=\cat_n.\]
We also obtain that $\omega p_{n-1}$ is a polynomial in $\Z[\omega]$, of degree $2n-3$. Summing in (\ref{EQ2}),
we obtain the first point for $p_n$. Still by (\ref{EQ2}):
\begin{align*}
p_n&=\omega(\omega-1)\sum_{k=2}^{n-2}p_kp_{n-k}+2\omega(\omega-1)p_{n-1}+\omega p_{n-1}\\
&=\omega(\omega-1)\sum_{k=2}^{n-2}p_kp_{n-k}+\omega(2\omega-1)p_{n-1}\\
&=\omega^{n+1}(\omega -1)\sum_{k=2}^{n-2} q_kq_{n-k}+\omega^n(2\omega-1)q_{n-1}\\
&=\omega^n\left(\underbrace{\omega(\omega -1)\sum_{k=2}^{n-2} q_kq_{n-k}+(2\omega-1)q_{n-1}}_{q_n}\right).
\end{align*}
Moreover, $q_n(0)=0-q_{n-1}(0)=(-1)^n$, which proves the second point. \\

3. For $\omega=\dfrac{1}{2}$, we obtain:
\[P(X)_{\mid \omega=\frac{1}{2}}=X-2+\sqrt{4+X^2}=X+2\sum_{k=2}^\infty \frac{(-1)^k (2k-2)!}{2^{4k-1}k!(k-1)!}X^{2k}.
\qedhere\]
\end{proof}

\begin{cor} \label{cor1.5}
For any $n\geqslant 2$,
\[p_n=\frac{\omega^n}{n-1}\left(\sum_{k=1}^{n-1}\binom{n-1}{k}\binom{n-1}{k-1}\omega^{n-1-k}
(\omega-1)^{k-1}\right).\]
\end{cor}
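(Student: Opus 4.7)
The plan is to apply the Lagrange inversion formula to the functional equation~\eqref{EQ3} established during the proof of Proposition~\ref{prop1.3}. Rearranging that equation as
\[X(1+\omega P)=P\bigl(1-\omega(\omega-1)P\bigr),\]
we see that $P=X\,\phi(P)$ with $\phi(P)=\dfrac{1+\omega P}{1-\omega(\omega-1)P}$. Since $\phi(0)=1$, Lagrange inversion (in $\mathbb{Q}(\omega)[[X]]$, treating $\omega$ as a formal indeterminate) yields
\[n\,p_n \;=\; n\,[X^n]P(X) \;=\; [P^{n-1}]\phi(P)^n \;=\; [P^{n-1}]\frac{(1+\omega P)^n}{\bigl(1-\omega(\omega-1)P\bigr)^n}.\]

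The key observation for a clean coefficient extraction is the decomposition
\[1+\omega P=\bigl(1-\omega(\omega-1)P\bigr)+\omega^2 P,\qquad \text{hence}\qquad \phi(P)=1+\frac{\omega^2 P}{1-\omega(\omega-1)P}.\]
Applying the binomial theorem to $\phi(P)^n$ collapses what would otherwise be a double Cauchy product into a single sum:
\[\phi(P)^n=\sum_{k=0}^{n}\binom{n}{k}\,\omega^{2k}\,\frac{P^k}{\bigl(1-\omega(\omega-1)P\bigr)^k}.\]
Expanding $(1-\omega(\omega-1)P)^{-k}=\sum_{j\geq 0}\binom{k+j-1}{j}\omega^{j}(\omega-1)^{j}P^{j}$ via the generalized binomial series and reading off $[P^{n-1}]$ (only $1\leq k\leq n-1$ contribute, with $j=n-1-k$), I obtain
\[n\,p_n=\sum_{k=1}^{n-1}\binom{n}{k}\binom{n-2}{n-1-k}\omega^{n-1+k}(\omega-1)^{n-1-k}.\]

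The last step is purely combinatorial. Reversing the summation via $k\mapsto n-k$ recasts the identity as
\[n\,p_n=\omega^{n}\sum_{k=1}^{n-1}\binom{n}{k}\binom{n-2}{k-1}\omega^{n-1-k}(\omega-1)^{k-1},\]
and the elementary identity
\[\frac{1}{n}\binom{n}{k}\binom{n-2}{k-1}=\frac{1}{n-1}\binom{n-1}{k}\binom{n-1}{k-1},\]
whose common value is $\dfrac{(n-1)!\,(n-2)!}{k!\,(k-1)!\,(n-k)!\,(n-1-k)!}$, yields the stated closed form, with the Narayana numbers $N(n-1,k)=\tfrac{1}{n-1}\binom{n-1}{k}\binom{n-1}{k-1}$ emerging automatically. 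The only nontrivial step is spotting the decomposition of $\phi$ above, which turns a two-index coefficient extraction into a single binomial sum; after that, the computation is formal.
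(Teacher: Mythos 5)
Your proof is correct, but it follows a genuinely different route from the paper. The paper proves the corollary by recognizing $P$ in terms of the bivariate Narayana generating function: it quotes the closed form $\mathbf{N}(z,t)=\sum N(n,k)z^nt^{k-1}$ and checks, by comparison with the square-root expression (\ref{EQ1}), that $P(X)=X+X\,\mathbf{N}\bigl(\omega^2X,\tfrac{\omega-1}{\omega}\bigr)$, after which the coefficient extraction is immediate. You instead work directly from the quadratic functional equation (\ref{EQ3}), never touching the square root: you rewrite it as $P=X\phi(P)$ with $\phi(P)=\frac{1+\omega P}{1-\omega(\omega-1)P}$ and apply Lagrange inversion. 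Your computation checks out at every step: the decomposition $1+\omega P=(1-\omega(\omega-1)P)+\omega^2P$ is valid and does reduce the extraction to a single sum; the coefficient $\binom{n}{k}\binom{n-2}{n-1-k}\omega^{n-1+k}(\omega-1)^{n-1-k}$ is right (only $1\leqslant k\leqslant n-1$ contribute since the $k=0$ term of $\phi(P)^n$ is the constant $1$); the reindexing $k\mapsto n-k$ and the identity $\frac{1}{n}\binom{n}{k}\binom{n-2}{k-1}=\frac{1}{n-1}\binom{n-1}{k}\binom{n-1}{k-1}$ are both correct (I verified the formula against $p_2=\omega^2$ and $p_3=(2\omega-1)\omega^3$). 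What each approach buys: the paper's argument is shorter once one is willing to cite the closed form of the Narayana generating function, and it makes the combinatorial interpretation (Narayana numbers, weighted Catalan paths as in the subsequent remark) transparent; yours is self-contained modulo Lagrange inversion, avoids any manipulation of the algebraic (square-root) closed form, and has the Narayana numbers emerge at the end rather than being fed in. Either proof is acceptable.
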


\begin{proof}
Let us consider the Narayana numbers \cite{Narayana}:
\begin{align*}
&\forall k,n\geqslant 1,&N(n,k)&=\frac{1}{n}\binom{n}{k}\binom{n}{k-1},
\end{align*}
and their formal series
\[\mathbf{N}(z,t)=\sum_{k,n\geqslant 1}N(n,k) z^n t^{k-1}=\frac{1-z(t+1)-\sqrt{1-2z(t+1)+z^2(t-1)^2}}{2tz}.\]
Then:
\begin{align*}
P(X)&=X+X\mathbf{N}\left(\omega^2X,\frac{\omega-1}{\omega}\right)\\
&=X+\sum_{k,n\geqslant 1}N(n,k)\omega^{2n}X^{n+1}\left(\frac{\omega-1}{\omega}\right)^{k-1}\\
&=X+\sum_{n=2}^\infty \left(\sum_{k=1}^{n-1} N(n-1,k)\omega^{2n-2}\left(\frac{\omega-1}{\omega}\right)^{k-1}\right) X^n\\
&=X+\sum_{n=2}^\infty \left(\sum_{k=1}^{n-1} N(n-1,k)\omega^{2n-1-k}(\omega-1)^{k-1}\right) X^n.\qedhere
\end{align*}
\end{proof}

\begin{remark}
These numbers appear in \cite{ChenPan}, where they are interpreted in terms of Catalan paths. 
More precisely, with  the notations of \cite[Definition (1.13)]{ChenPan}:
\[p_n=\frac{1}{n-1}\sum_{k=1}^{n-1}\binom{n-1}{k}\binom{n-1}{k-1}(\omega^2)^{n-k}(\omega(\omega-1))^{k-1}
=C_{n-1}^{(\omega^2,\:\omega(\omega-1))}.\]
\end{remark}

\subsection{Koszul dual of $\opas$}

In all this paragraph, $(\Omega,\rightarrow)$ is a finite semigroup.

\begin{prop}
Koszul dual ${\opas}^!$ of the operad $\opas$ is the quotient of $\opas$ by the trees
\begin{align*}
&\bdtroisun((\alpha,\beta),(\gamma,\delta))=(\alpha,\beta)\circ_1(\gamma,\delta)
 \mbox{ with }\alpha\rightarrow \beta\neq \delta,\\
&\bdtroisdeux((\alpha,\beta),(\gamma,\delta))
=(\alpha,\beta)\circ_2(\gamma,\delta) \mbox{ with }\beta\neq \gamma\rightarrow \delta.
\end{align*}
In other terms, a ${\opas}^!$-algebra is a family $(V,(*_{\alpha,\beta})_{\alpha,\beta \in \Omega})$,
where $V$ is a vector space and,  for any $(\alpha,\beta)\in \Omega^2$, $*_{\alpha,\beta}:V\otimes V\longrightarrow V$
is a linear map such that:
\begin{align*}
&\forall \alpha,\beta,\gamma \in \Omega,\: \forall x,y,z\in V,&
(x*_{\alpha,\beta} y)*_{\alpha \rightarrow \beta,\gamma} z&=x*_{\alpha,\beta \rightarrow \gamma}(y*_{\beta,\gamma} z),\\
&\forall \alpha,\beta,\gamma,\delta \in \Omega,\:\forall x,y,z\in V,&
(x*_{\alpha,\beta} y)*_{\gamma,\delta} z&=0 \mbox{ if }\alpha \rightarrow \beta\neq \gamma,\\
&&x*_{\alpha,\beta}(y*_{\gamma,\delta} z)&=0\mbox{ if }\beta \neq \gamma \rightarrow \delta.
\end{align*}
For any $n\geqslant 2$, $\dimK({\opas}^!(n))=\omega^n$.

\end{prop}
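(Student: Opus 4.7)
The plan is to apply the standard construction of the Koszul dual of a binary quadratic nonsymmetric operad. Writing $\opas$ with generators $E=\opas(2)$, of basis $\{*_{\alpha,\beta}:\alpha,\beta\in\Omega\}$, and quadratic relators $r_{\alpha,\beta,\gamma}=*_{\alpha\rightarrow\beta,\gamma}\circ_1 *_{\alpha,\beta}-*_{\alpha,\beta\rightarrow\gamma}\circ_2 *_{\beta,\gamma}$ spanning a space $R$ of dimension $\omega^3$ inside the arity-three part of the free operad (of total dimension $2\omega^4$), the Koszul dual ${\opas}^!$ is presented by the same generators and by $R^\perp$, computed with respect to the canonical pairing on the two tree shapes $\circ_1,\circ_2$: orthonormal on each, with opposite signs so that the classical associator of $\as$ is self-orthogonal.

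The first step is to exhibit an explicit basis of $R^\perp$. I claim it is the disjoint union of three families: the relators $r_{\alpha,\beta,\gamma}$ themselves, whose self-pairing vanishes because the $+1$ contribution from the $\circ_1$ tree cancels the $-1$ from the $\circ_2$ tree; the inconsistent $\circ_1$-trees $*_{a,b}\circ_1 *_{c,d}$ with $a\neq c\rightarrow d$, orthogonal to every $r_{\alpha,\beta,\gamma}$ since a nonzero pairing would force $a=\alpha\rightarrow\beta=c\rightarrow d$; and the inconsistent $\circ_2$-trees $*_{a,b}\circ_2 *_{c,d}$ with $b\neq c\rightarrow d$, treated symmetrically. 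The count $\omega^3+(\omega^4-\omega^3)+(\omega^4-\omega^3)=2\omega^4-\omega^3$ matches $\dim R^\perp$, and linear independence is immediate since the two trees appearing in each $r_{\alpha,\beta,\gamma}$ are consistent. This yields the three algebra conditions in the statement.

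For the dimension formula $\dimK({\opas}^!(n))=\omega^n$, the rewriting rule $\bdtroisun((\alpha\rightarrow\beta,\gamma),(\alpha,\beta))\longrightarrow\bdtroisdeux((\alpha,\beta\rightarrow\gamma),(\beta,\gamma))$ established in Proposition \ref{prop1.3} still applies in ${\opas}^!$ and remains confluent, so a normal form is a right comb. The extra vanishing relations restrict attention to the consistent right combs: labelling the internal vertices from the root upwards by $(\alpha_1,\beta_1),\ldots,(\alpha_{n-1},\beta_{n-1})$, the second vanishing condition forces $\beta_i=\alpha_{i+1}\rightarrow\beta_{i+1}$ for $1\le i\le n-2$. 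The parameters $\alpha_1,\ldots,\alpha_{n-1},\beta_{n-1}$ may be chosen freely in $\Omega$, yielding $\omega^n$ normal forms. The most delicate step is the sign bookkeeping in the Koszul pairing, needed to certify that the associativity relators themselves lie in $R^\perp$; once fixed, the rest is a count. As a sanity check, $f_{\opas^!}(X)=X+\omega^2X^2/(1-\omega X)$ satisfies the Koszul duality equation $f_{\opas}(-f_{\opas^!}(-X))=X$ with $f_{\opas}$ given by (\ref{EQ3}).
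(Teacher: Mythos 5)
Your argument is correct, and it both fills in and supplements what the paper actually does. For the presentation, the paper only says that it ``comes from a direct computation''; you carry out that computation, exhibiting the basis of $R^\perp$ as the relators themselves together with the inconsistent $\circ_1$- and $\circ_2$-trees, checking orthogonality against the sign convention of the pairing (the same one the paper uses later in Proposition \ref{prop3.6}), and closing the argument with the dimension count $\omega^3+2(\omega^4-\omega^3)=2\omega^4-\omega^3=\dim R^\perp$. This is exactly the intended computation, done in the standard way; note in passing that your indexing of the vanishing $\circ_1$-trees ($*_{a,b}\circ_1 *_{c,d}=0$ when $a\neq c\rightarrow d$) agrees with the algebra-level axioms in the statement, whereas the tree-level condition printed in the proposition ($\alpha\rightarrow\beta\neq\delta$) appears to be a typo. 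For the dimension formula the paper argues purely by generating functions: since $\opas$ is Koszul by Proposition \ref{prop1.3}, $Q(X)$ is the compositional inverse of $-P(-X)$, which equation (\ref{EQ3}) turns into $Q(X)=(\omega(\omega-1)X^2+X)/(1-\omega X)$. You instead lead with a combinatorial normal-form count (consistent right combs, $\omega^n$ of them), which has the merit of producing an explicit basis matching the free-algebra description in the corollary that follows; the one soft spot is that you assert rather than verify confluence of the enlarged rewriting system. That assertion is only needed for linear independence of the normal forms, and since termination alone already gives the upper bound $\dimK({\opas}^!(n))\leqslant\omega^n$ while your closing ``sanity check'' is precisely the paper's generating-function argument and yields the exact value, the proof is complete as written.
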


\begin{proof}
The presentation of ${\opas}^!$ comes from a direct computation. 
Let us denote by $Q(X)$ the Poincaré-Hilbert formal series of ${\opas}^!$. As $\opas$ is Koszul, 
$Q(X)$ is the inverse for the composition of $-P(-X)$. From (\ref{EQ3}):
\begin{align*}
X&=\frac{P(X)-\omega(\omega-1)P(X)^2}{\omega P(X)+1}
=\frac{-P(-X)+\omega(\omega-1)(-P(-X))^2}{1-\omega(-P(-X))},
\end{align*}
so $Q(X)$ is given by:
\begin{align*}
Q(X)&=\frac{\omega(\omega-1)X^2+X}{1-\omega X}=X+\sum_{n=2}^\infty \omega^n X^n. \qedhere
\end{align*} \end{proof}

Let us give a combinatorial presentation of ${\opas}^!$:

\begin{prop}\label{prop1.7}
For any $n\geqslant 1$, let us put $\bfP(n)=(\K \Omega)^{\otimes n}$. Elements of $\bfP(n)$ are linear spans of words
$\alpha_1\ldots \alpha_n$ in $\Omega$. Then $\bfP=(\bfP(n))_{n\geqslant 1}$ is given a structure of nonsymmetric operad
with the following composition: for any $\alpha_1,\ldots,\alpha_n \in \Omega$,
for any $\beta_{i,j}\in \Omega$,
\[\alpha_1\ldots \alpha_n \circ (\beta_{1,1}\ldots \beta_{1,k_1},\ldots,\beta_{n,1}\ldots \beta_{n,k_n})
=\left(\prod_{i=1}^n \delta_{\alpha_i, \beta_{i,1}\rightarrow \ldots\rightarrow \beta_{i,k_i}}\right)
\beta_{1,1}\ldots \beta_{1,k_1}\ldots \beta_{n,1}\ldots \beta_{n,k_n}.\]
The unit is:
\[I=\sum_{\alpha \in \Omega} \alpha.\]
We define a suboperad $\bfP_0$ isomorphic to ${\opas}^!$ by:
\[\bfP_0(n)=\begin{cases}
\K I\mbox{ if }n=1,\\
\bfP(n)\mbox{ if }n\geqslant 2.
\end{cases}\]
\end{prop}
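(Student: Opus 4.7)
The strategy is to define a morphism of nonsymmetric operads $\phi: {\opas}^! \to \bfP_0$ sending each generator $*_{\alpha,\beta}$ to the word $\alpha\beta$ in $\bfP_0(2) = (\K\Omega)^{\otimes 2}$, verify that $\phi$ respects the three defining relations of ${\opas}^!$, then show that $\phi$ is bijective using a dimension count. As a preliminary step I would confirm that $\bfP$ is itself a nonsymmetric operad with $I$ as unit. The unit axiom follows from $\sum_\alpha \delta_{\alpha,\sigma} = 1$: for $w = \beta_1 \ldots \beta_n$ one finds $I \circ w = \sum_\alpha \delta_{\alpha, \beta_1 \to \cdots \to \beta_n}\, \beta_1 \ldots \beta_n = w$, and similarly $w \circ (I, \ldots, I) = w$. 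Associativity of the composition reduces, via the Kronecker delta bookkeeping, to the associativity of $\to$ on $\Omega$: if each inner letter $\beta_{i,j}$ is forced by a delta factor to equal $\gamma_{i,j,1} \to \cdots \to \gamma_{i,j,l_{i,j}}$, then $\beta_{i,1} \to \cdots \to \beta_{i,k_i}$ coincides with the full product $\gamma_{i,1,1} \to \cdots \to \gamma_{i,k_i,l_{i,k_i}}$, making the two iterated compositions agree. Closure of $\bfP_0$ under operadic composition is immediate from $w \circ_i I = w$, so plugging a multiple of $I$ in arity one does not leave $\bfP_0$ when the outer operation has arity $\geq 2$.

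Next, I would verify that $\phi$ respects the relations. Working in $\bfP_0$:
\[
\alpha\beta \circ_1 \gamma\delta = (\alpha\beta) \circ (\gamma\delta, I) = \delta_{\alpha, \gamma \to \delta}\, \gamma\delta\beta, \qquad \alpha\beta \circ_2 \gamma\delta = \delta_{\beta, \gamma \to \delta}\, \alpha\gamma\delta,
\]
which directly yields the two vanishing relations of ${\opas}^!$. Direct computation shows that both $\phi(*_{\alpha\to\beta,\gamma} \circ_1 *_{\alpha,\beta})$ and $\phi(*_{\alpha,\beta\to\gamma} \circ_2 *_{\beta,\gamma})$ equal $\alpha\beta\gamma$, so the two-parameters associativity relation also holds; hence $\phi$ is a well-defined morphism of nonsymmetric operads.

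To conclude, I would establish surjectivity and invoke dimensions. Set $\gamma_k = \alpha_1 \to \cdots \to \alpha_k$ and define recursively, for $n \geq 2$, $T_2 = *_{\alpha_1,\alpha_2}$ and $T_n = *_{\gamma_{n-1},\alpha_n} \circ_1 T_{n-1}$. A straightforward induction, using associativity of $\to$ to ensure the relevant Kronecker deltas equal $1$, gives $\phi(T_n) = \alpha_1 \ldots \alpha_n$, so $\phi$ is surjective in every arity. Since the preceding proposition gives $\dimK({\opas}^!(n)) = \omega^n = \dimK(\bfP_0(n))$ for $n \geq 2$, and both sides have dimension one in arity one, surjectivity forces $\phi$ to be an isomorphism. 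The main technical obstacle is the verification of associativity for the composition in $\bfP$: the two nested compositions produce a rich product of Kronecker deltas whose equality requires careful index tracking, though it ultimately rests on nothing more than the associativity of $\to$.
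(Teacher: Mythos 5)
Your proposal is correct and follows essentially the same route as the paper: check that $\bfP$ is an operad (with associativity reducing, via the identity $|w_{i,1}\ldots w_{i,k_i}|=\bigl||w_{i,1}|\ldots|w_{i,k_i}|\bigr|$, to associativity of $\rightarrow$), verify the three relations of ${\opas}^!$ on the generators $\alpha\beta$, establish surjectivity by writing each word of length $n\geqslant 2$ as an iterated $\circ_1$-composition of arity-two elements, and conclude by comparing dimensions. The only cosmetic difference is that the paper phrases surjectivity via the decomposition $\alpha_1\ldots\alpha_n=(\alpha_1\rightarrow\alpha_2)\alpha_3\ldots\alpha_n\circ(\alpha_1\alpha_2,I,\ldots,I)$ rather than your bottom-up recursion, which is the same induction.
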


\begin{proof}
For any word  $w=\alpha_1\ldots\alpha_n$ in $\alpha$, we put $|w|=\alpha_1\rightarrow \ldots \rightarrow \alpha_n$.
The composition $\circ$ can be rewritten in the following way: for any words $w,w_1,\ldots,w_n$ with letters in $\Omega$,
$w$ being of length $n$,
\[w\circ (w_1,\ldots,w_n)=\delta_{w,|w_1|\ldots |w_n|} w_1\ldots w_n.\]
Let us prove the associativity: for any word $w$ of length $n$, $w_i$, $1\leqslant i\leqslant n$ of respective lengths $k_i$,
$w_{i,j}$ with $1\leqslant i \leqslant n$ and $1\leqslant j\leqslant k_i$, all with letters in $\Omega$:
\begin{align*}
&w\circ (w_1\circ (w_{1,1},\ldots,w_{1,k_1}),\ldots,w_n\circ (w_{n,1},\ldots,w_{n,k_n}))\\
&=\left(\prod_{i=1}^n \delta_{w_i,|w_{i,1}|\ldots |w_{i,k_i}|}
\delta_{w,\left| |w_{1,1}|\ldots |w_{1,k_1}|\right|\ldots \left| |w_{n,1}|\ldots |w_{n,k_n}|\right|}\right)w_{1,1}\ldots w_{n,k_n}\\
&=\left(\delta_{w_1\ldots w_n, |w_{1,1}|\ldots |w_{n,k_n}|}\delta_{w,|w_1|\ldots |w_n|}\right)w_{1,1}\ldots w_{n,k_n}\\
&=(w\circ (w_1,\ldots,w_n))\circ (w_{1,1},\ldots,w_{n,k_n}).
\end{align*}
Let us prove that $I$ is a unit. Let $w=\alpha_1\ldots \alpha_n$ be a word with letters in $\Omega$.
\begin{align*}
I\circ \alpha_1\ldots \alpha_n&=\sum_{\alpha\in \Omega} \delta_{\alpha,|\alpha_1\ldots \alpha_n}\alpha_1\ldots \alpha_n
=\alpha_1\ldots \alpha_n,\\
\alpha_1\ldots \alpha_n\circ (I,\ldots,I)&=\sum_{\beta_1,\ldots,\beta_n \in \Omega}
\prod_{i=1}^n \delta_{\alpha_i,\beta_i} \beta_1\ldots\beta_n\\
&=\alpha_1\ldots \alpha_n.
\end{align*}
So $\bfP$ is an operad. Obviously, $\bfP_0$ is a suboperad. Moreover, for any word $\alpha_1\ldots \alpha_n$
of length $\geqslant 3$:
\[\alpha_1\ldots \alpha_n=(\alpha_1\rightarrow \alpha_2)\alpha_3\ldots \alpha_n\circ (\alpha_1\alpha_2,I,\ldots,I).\]
A direct induction then proves that $\bfP_0$ is generated by $\bfP(2)$. Moreover, for any $\alpha,\beta,\gamma,\delta\in \Omega$:
\begin{align*}
(\alpha \rightarrow \beta)\gamma \circ (\alpha\beta,I)&=\alpha (\beta \rightarrow \gamma)\circ (I,\beta\gamma)
=\alpha\beta \gamma,\\
\gamma \delta\circ (\alpha\beta,I)&=0\mbox{ if }\alpha\rightarrow \beta \neq \gamma,\\
\alpha\beta \circ (I,\gamma\delta)&=0\mbox{ if }\gamma \rightarrow \delta \neq \beta.
\end{align*}
Therefore, the relations defining ${\opas}^!$ are satisfied in $\bfP_0$. 
Hence, there exists a surjective operad morphism:
\[\left\{\begin{array}{rcl}
{\opas}^!&\longrightarrow&\bfP_0\\
*_{\alpha,\beta}&\longrightarrow&\alpha\beta.
\end{array}\right.\]
Comparing the formal series of ${\opas}^!$ and $\bfP_0$, we deduce that this is an isomorphism. \end{proof}

\begin{cor}
Let $V$ be a vector space. The free ${\opas}^!$-algebra generated by $V$ is:
\[T_\Omega^2(V)=V\oplus \bigoplus_{n=2}^\infty (\K\Omega \otimes V)^{\otimes n}.\]
The product $*_{\alpha,\beta}$ is given in the following way: for any $u,v\in V$,
for any $\alpha_1u_1,\ldots, \alpha_ku_k$, 
$\beta_1v_1,\ldots, \beta_lv_l\in \K\Omega \otimes V$, with $k,l\geqslant 1$,
\begin{align*}
u*_{\alpha,\beta} v&=\alpha u\beta v,\\
\alpha_1u_1\ldots \alpha_ku_k *_{\alpha,\beta} v&=\left(\delta_{\alpha,\alpha_1\rightarrow\ldots \rightarrow \alpha_k}\right)
\alpha_1u_1\ldots \alpha_k u_k \beta v,\\
u *_{\alpha,\beta} \beta_1v_1\ldots \beta_lv_l&=\left(\delta_{\beta,\beta_1\rightarrow\ldots \rightarrow \beta_l}\right)
\alpha u \beta_1v_1\ldots \beta_lv_l,\\
\alpha_1u_1\ldots \alpha_ku_k*_{\alpha,\beta} \beta_1v_1\ldots \beta_lv_l&=
\left(\delta_{\alpha,\alpha_1\rightarrow\ldots \rightarrow \alpha_k}\delta_{\beta,\beta_1\rightarrow\ldots \rightarrow \beta_l}
\right)\alpha_1u_1\ldots \alpha_ku_k \beta_1v_1\ldots \beta_lv_l.
\end{align*}
\end{cor}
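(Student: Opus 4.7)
The plan is to use the identification ${\opas}^! \cong \bfP_0$ from Proposition \ref{prop1.7} and apply the standard construction of the free algebra over a nonsymmetric operad. Recall that if $\mathcal{P}$ is a nonsymmetric operad, the free $\mathcal{P}$-algebra generated by $V$ is
\[
\mathcal{F}_{\mathcal{P}}(V) \;=\; \bigoplus_{n\geqslant 1} \mathcal{P}(n)\otimes V^{\otimes n},
\]
with the $\mathcal{P}(k)$-ary operation on it induced by operadic composition. Applying this with $\mathcal{P} = \bfP_0$, the summand for $n=1$ is $\K I\otimes V\cong V$, and for $n\geqslant 2$ we have $\bfP_0(n) = (\K\Omega)^{\otimes n}$, so the summand is
\[
(\K\Omega)^{\otimes n}\otimes V^{\otimes n}.
\]
First I would make the identification of vector spaces
\[
(\K\Omega)^{\otimes n}\otimes V^{\otimes n} \;\cong\; (\K\Omega\otimes V)^{\otimes n},\qquad
\alpha_1\ldots\alpha_n\otimes u_1\ldots u_n \;\longmapsto\; \alpha_1 u_1\ldots \alpha_n u_n,
\]
which yields the announced description $T_\Omega^2(V) = V\oplus \bigoplus_{n\geqslant 2}(\K\Omega\otimes V)^{\otimes n}$.

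Next I would compute the products. Under the isomorphism ${\opas}^!\cong \bfP_0$, the generator $*_{\alpha,\beta}$ corresponds to the word $\alpha\beta \in \bfP_0(2)$, and an element $\alpha_1 u_1\ldots \alpha_k u_k$ of the summand for $n=k$ corresponds to $\alpha_1\ldots\alpha_k\otimes u_1\ldots u_k$, while an element $u\in V$ corresponds to $I\otimes u$. The product is then obtained by operadic composition: for example,
\[
\alpha_1 u_1\ldots \alpha_k u_k *_{\alpha,\beta} \beta_1 v_1\ldots \beta_l v_l
\]
corresponds to $\bigl(\alpha\beta \circ (\alpha_1\ldots\alpha_k,\,\beta_1\ldots\beta_l)\bigr) \otimes u_1\ldots u_k v_1\ldots v_l$, and the composition formula of Proposition \ref{prop1.7} reads
\[
\alpha\beta\circ(\alpha_1\ldots\alpha_k,\beta_1\ldots\beta_l)
= \delta_{\alpha,\alpha_1\rightarrow\ldots\rightarrow\alpha_k}\,
\delta_{\beta,\beta_1\rightarrow\ldots\rightarrow\beta_l}\,
\alpha_1\ldots\alpha_k\beta_1\ldots\beta_l.
\]
Translating back via the identification above yields exactly the displayed formula. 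The three degenerate cases (one or both arguments in $V$) are handled identically, using that the unit $I = \sum_{\gamma\in\Omega}\gamma$ satisfies $\alpha\beta\circ(I,w) = \alpha w$ when $|w|=\beta$ and vanishes otherwise, and symmetrically on the right.

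The only genuine obstacle is bookkeeping: checking that the tensor reordering does not introduce spurious signs or Koszul twists (none arise, since we are in the nonsymmetric setting) and that the indexing conventions in the composition $\circ$ of $\bfP$ match those of the operadic product. Once this is done, the universal property of $T_\Omega^2(V)$ as a free $\bfP_0$-algebra, hence as a free ${\opas}^!$-algebra, is automatic from the general fact recalled at the start of this plan.
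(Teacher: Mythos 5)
Your proof is correct and is exactly the intended argument: the paper states this as an immediate corollary of Proposition \ref{prop1.7} without further proof, relying precisely on the standard description of the free algebra over a nonsymmetric operad as $\bigoplus_n \mathcal{P}(n)\otimes V^{\otimes n}$ applied to the combinatorial model $\bfP_0$. Your verification of the composition formulas, including the treatment of the unit $I=\sum_{\gamma\in\Omega}\gamma$ in the degenerate cases, matches the displayed products exactly.
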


\section{Extended associative semigroups}

We here give the definition and few examples of extended associative semigroups.
More results can be found in \cite{FoissyEAS}.

\subsection{Definition and examples}

\begin{defi}
An associative extended semisgroup (briefly, EAS) is a triple $(\Omega,\rightarrow,\triangleright)$, where $\Omega$ is a
nonempty  set and $\rightarrow,\triangleright:\Omega^2\longrightarrow \Omega$ are maps such that, 
for any $\alpha,\beta,\gamma \in \Omega$:
\begin{align}
\label{EQ5} \alpha \rightarrow (\beta \rightarrow \gamma)&=(\alpha \rightarrow \beta) \rightarrow \gamma,\\
\label{EQ6} (\alpha \triangleright (\beta \rightarrow \gamma))\rightarrow (\beta \triangleright \gamma)
&=(\alpha \rightarrow \beta)\triangleright \gamma,\\
\label{EQ7} (\alpha \triangleright (\beta \rightarrow \gamma))\triangleright (\beta \triangleright \gamma)
&=\alpha \triangleright \beta.
\end{align}
\end{defi}

\begin{example}
\begin{enumerate}
\item Let $\Omega$ be a set.  We put:
\begin{align*}
&\forall \alpha,\beta \in \Omega,&\begin{cases}
 \alpha \rightarrow \beta=\beta,\\
\alpha \triangleright \beta=\alpha.
\end{cases} \end{align*}
Then $(\Omega,\rightarrow,\triangleright)$ is an EAS, denoted by $\eas(\Omega)$. 

\item Let $(\Omega,\star)$  be an associative semigroup. We put:
\begin{align*}
&\forall \alpha,\beta \in \Omega,& \alpha \triangleright \beta&=\alpha.
\end{align*}
Then $(\Omega,\star,\triangleright)$  is an EAS, which we denote by $\eas(\Omega,\star)$.  
\item Let $(\Omega,\star)$ be a group. We put, for any $\alpha,\beta\in \Omega$:
\begin{align*}
\alpha \rightarrow \beta&=\beta,&
\alpha \triangleright \beta&=\alpha \star \beta^{\star-1}.
\end{align*}
Then $(\Omega,\rightarrow,\triangleright)$ is an EAS, denoted by $\eas'(\Omega,\star)$. 
\end{enumerate}
\end{example}

\begin{defi} \label{defi9}
Let $(\Omega,\rightarrow,\triangleright)$ be an EAS. We shall say that it is nondegenerate if the following map is bijective:
\[\phi:\left\{\begin{array}{rcl}
\Omega^2&\longrightarrow&\Omega^2\\
(\alpha,\beta)&\longrightarrow&(\alpha\rightarrow\beta,\alpha \triangleright\beta).
\end{array}\right.\]
\end{defi}

\begin{example}\begin{enumerate}
\item Let $\Omega$ be a set. In $\eas(\Omega)$,  for any $\alpha,\beta \in \Omega$,
$\phi(\alpha,\beta)=(\beta,\alpha)$, so $\eas(\Omega)$  is nondegenerate, and $\phi^{-1}=\phi$.
\item Let $(\Omega,\star)$ be a group. Then $\eas(\Omega,\star)$ is nondegenerate.
indeed, for any $\alpha,\beta \in \Omega$,  $\phi(\alpha,\beta)=(\alpha\star\beta,\alpha)$, so $\phi$  is a bijection, of inverse
given by $\phi^{-1}(\alpha,\beta)=(\beta,\beta^{\star-1}\star \alpha)$. 
\item Let $(\Omega,\star)$ be an associative semigroup with the right inverse condition. Then $\eas'(\Omega,\star)$ 
is nondegenerate. Indeed, for any $\alpha,\beta \in \Omega$, 
$\phi(\alpha,\beta)=(\beta,\alpha\triangleright \beta)$, so $\phi$  is a bijection, 
of inverse given by $\phi^{-1}(\alpha,\beta)=(\beta\star \alpha,\alpha)$.
\end{enumerate}\end{example}

\subsection{EAS of cardinality two}

Here is a classification of EAS of cardinality two. The underlying set is $\Omega=\{a,b\}$
and the products will be given by two matrices
\begin{align*}
&\begin{pmatrix}
a\rightarrow a&a\rightarrow b\\
b\rightarrow a&b\rightarrow b
\end{pmatrix},&
\begin{pmatrix}
a\triangleright a&a\triangleright b\\
b\triangleright a&b\triangleright b
\end{pmatrix}.
\end{align*}
We shall use the two maps:
\begin{align*}
\pi_a&:\left\{\begin{array}{rcl}
\Omega&\longrightarrow&\Omega\\
\alpha&\longrightarrow&a,
\end{array}\right.&
\pi_b&:\left\{\begin{array}{rcl}
\Omega&\longrightarrow&\Omega\\
\alpha&\longrightarrow&b.
\end{array}\right.&
\end{align*}
We respect the indexation of EDS of \cite{Foissydendriforme}.

\begin{align*}
&\begin{array}{|c|c|c|l|}
\hline \mbox{Case}&\rightarrow&\triangleright&\mbox{Description}\\
\hline \hline \mathbf{A1}&\begin{pmatrix}
a&a\\a&a
\end{pmatrix}&
\begin{pmatrix}
a&a\\a&a
\end{pmatrix}&\eas(\Omega,\rightarrow,\pi_a)\\
\hline \mathbf{A2}&\begin{pmatrix}
a&a\\a&a
\end{pmatrix}&
\begin{pmatrix}
a&a\\b&b
\end{pmatrix}&\eas(\Omega,\rightarrow)\\
\hline \mathbf{C1} &\begin{pmatrix}
a&a\\a&b
\end{pmatrix}&
\begin{pmatrix}
a&a\\a&a
\end{pmatrix}&\eas(\Omega,\rightarrow,\pi_{\overline{0}})\\
\hline \mathbf{C3}&\begin{pmatrix}
a&a\\a&b
\end{pmatrix}&
\begin{pmatrix}
a&a\\b&b
\end{pmatrix}&\eas(\Z/2\Z,\times)\\
\hline \mathbf{C5} &\begin{pmatrix}
a&a\\a&b
\end{pmatrix}&
\begin{pmatrix}
b&b\\b&b
\end{pmatrix}&\eas(\Z/2\Z,\pi_{\overline{1}})\\
\hline \mathbf{C6}&\begin{pmatrix}
a&a\\a&b
\end{pmatrix}&
\begin{pmatrix}
a&a\\b&a
\end{pmatrix}&\\
\hline \mathbf{E1'-E2'}&\begin{pmatrix}
a&a\\b&b
\end{pmatrix}&
\begin{pmatrix}
a&a\\a&a
\end{pmatrix}&\eas(\Omega,\rightarrow,\pi_a)\\
\hline \mathbf{E3'}&\begin{pmatrix}
a&a\\b&b
\end{pmatrix}&
\begin{pmatrix}
a&a\\b&b
\end{pmatrix}&\eas(\Omega,\rightarrow)\\
\hline\mathbf{ F1}&\begin{pmatrix}
a&b\\a&b
\end{pmatrix}&
\begin{pmatrix}
a&a\\a&a
\end{pmatrix}&\eas(\Omega,\rightarrow,\pi_a)\\
\hline \mathbf{F3}&\begin{pmatrix}
a&b\\a&b
\end{pmatrix}&
\begin{pmatrix}
a&a\\b&b
\end{pmatrix}&\eas(\Omega)\\
\hline \mathbf{F4}&\begin{pmatrix}
a&b\\a&b
\end{pmatrix}&
\begin{pmatrix}
a&b\\b&a
\end{pmatrix}&\eas'(\Z/2\Z,+)\\
\hline \mathbf{H1}&\begin{pmatrix}
a&b\\b&a
\end{pmatrix}&
\begin{pmatrix}
a&a\\a&a
\end{pmatrix}&\eas(\Z/2\Z,+,\pi_{\overline{0}})\\
\hline \mathbf{H2}&\begin{pmatrix}
a&b\\b&a
\end{pmatrix}&
\begin{pmatrix}
a&a\\b&b
\end{pmatrix}&\eas(\Z/2\Z,+)\\
\hline \end{array}
\end{align*}
The nondegenerate EAS are $F_3$, $F_4$ and $H_2$.

\section{Generalized associative algebras}

\subsection{Discrete and linear versions}

\begin{defi}
Let $(\Omega,\rightarrow,\triangleright)$ be a set with two binary operations.
Let $(V,(*_\alpha)_{\alpha\in \Omega})$ be a family such that $V$ is a vector space and, for any $\alpha \in \Omega$,
$*_\alpha:V\otimes V\longrightarrow V$ is a linear map. We shall say that it is an $\Omega$-associative algebra if:
\begin{align}
\label{eq8}
&\forall x,y,z\in V,\:\forall \alpha,\beta \in \Omega,&
x*_\alpha (y *_\beta z)&=(x*_{\alpha \triangleright \beta} y)*_{\alpha \rightarrow \beta} z.
\end{align}
\end{defi}

\begin{example}
\begin{enumerate}
\item If $\Omega$ is a set, for $\eas(\Omega)$, (\ref{eq8}) becomes:
\begin{align*}
&\forall x,y,z\in V,\:\forall \alpha,\beta \in \Omega,&
x*_\alpha (y *_\beta z)&=(x*_\alpha y)*_\beta z.
\end{align*}
As a consequence, any linear span of $*_\alpha$ is associative. 
We recover the notion of matching associative algebra \cite{ZhangGaoGuo}.
\item If $(\Omega,\star)$ is a semigroup, for $\eas(\Omega,\star)$, (\ref{eq8}) becomes:
\begin{align*}
&\forall x,y,z\in V,\:\forall \alpha,\beta \in \Omega,&
x*_\alpha (y *_\beta z)&=(x*_\alpha y)*_{\alpha \star\beta} z.
\end{align*}
These are $(\Omega,\star)$-family associative algebras.
\end{enumerate}
\end{example}

\begin{remark}
This does not include the multiassociative and the dual multiassociative algebras introduced by Giraudo in \cite{Giraudo}.
We shall see that the dimension of the $n$-th components of the operad of $\Omega$-associative algebras
is $|\Omega|^{n-1}$ for any $n\geqslant 2$, whereas it is constant for dual multiassociative algebras and described
by Narayana numbers for dual multiassociative algebras.
\end{remark}

In order to linearize these axioms, let us first consider the following lemma, proved in \cite{FoissyPrelie2}:

\begin{lemma}\label{lem2.6}
Let $(\Omega,\rightarrow,\triangleright)$ be a set with two binary operations. We consider the maps
\begin{align*}
\phi&:\left\{\begin{array}{rcl}
\Omega^2&\longrightarrow&\Omega^2\\
(\alpha,\beta)&\longrightarrow&(\alpha \rightarrow \beta,\alpha \triangleright \beta),
\end{array}\right.&
\tau&:\left\{\begin{array}{rcl}
\Omega^2&\longrightarrow&\Omega^2\\
(\alpha,\beta)&\longrightarrow&(\beta,\alpha).
\end{array}\right.
\end{align*}
Then $(\Omega,\rightarrow,\triangleright)$ is an EAS if, and only if:
\begin{align}
\label{eq23}
(\id \times \phi)\circ (\phi\times \id)\circ (\id \times \phi)&=(\phi \times \id)\circ (\id \times \tau)\circ (\phi\times \id).
\end{align}
\end{lemma}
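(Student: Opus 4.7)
The plan is a direct verification: I would compute both sides of equation~(\ref{eq23}) on an arbitrary triple $(\alpha,\beta,\gamma)\in \Omega^3$, obtain a triple on each side, and then read off the three componentwise equalities, which will turn out to be exactly the three EAS axioms (\ref{EQ5}), (\ref{EQ6}), (\ref{EQ7}). Since $\phi$ is defined component by component and $\tau$ is just the flip, nothing beyond unwinding definitions is required; there is no real obstacle, only bookkeeping.

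Concretely, for the left-hand side I would first apply $\id \times \phi$ to get $(\alpha,\beta\rightarrow\gamma,\beta\triangleright\gamma)$, then $\phi\times\id$ to obtain
\[\bigl(\alpha\rightarrow(\beta\rightarrow\gamma),\;\alpha\triangleright(\beta\rightarrow\gamma),\;\beta\triangleright\gamma\bigr),\]
then $\id\times\phi$ once more to land on
\[\bigl(\alpha\rightarrow(\beta\rightarrow\gamma),\;(\alpha\triangleright(\beta\rightarrow\gamma))\rightarrow(\beta\triangleright\gamma),\;(\alpha\triangleright(\beta\rightarrow\gamma))\triangleright(\beta\triangleright\gamma)\bigr).\]
For the right-hand side I would successively apply $\phi\times\id$ to get $(\alpha\rightarrow\beta,\alpha\triangleright\beta,\gamma)$, then $\id\times\tau$ to get $(\alpha\rightarrow\beta,\gamma,\alpha\triangleright\beta)$, then $\phi\times\id$ to land on
\[\bigl((\alpha\rightarrow\beta)\rightarrow\gamma,\;(\alpha\rightarrow\beta)\triangleright\gamma,\;\alpha\triangleright\beta\bigr).\]

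Equating the two triples componentwise gives exactly the three identities (\ref{EQ5}), (\ref{EQ6}), (\ref{EQ7}), so equation~(\ref{eq23}) is equivalent to the conjunction of the EAS axioms. Since $(\alpha,\beta,\gamma)$ was arbitrary, both directions of the equivalence are proved simultaneously: the forward direction reads off the three equalities from (\ref{eq23}), and the converse reassembles them into the functional identity. The only thing worth checking carefully is the parenthesization and the order of the arguments in each tensor factor, which is the sole place a sign-style bookkeeping error could slip in.
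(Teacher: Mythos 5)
Your computation is correct: both sides applied to $(\alpha,\beta,\gamma)$ yield exactly the triples you write, and the three componentwise equalities are precisely (\ref{EQ5}), (\ref{EQ6}) and (\ref{EQ7}), so the equivalence follows. The paper itself defers the proof of Lemma \ref{lem2.6} to an external reference, but this direct unwinding of the definitions is the expected argument and there is nothing to add.
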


This naturally leads to the following definition:

\begin{defi}
Let $A$ be a vector space and let $\Phi:A\otimes A\longrightarrow A\otimes A$ be a linear map.
We shall say that $(A,\Phi)$ is a linear extended associative semigroup (briefly, $\ell$EAS) if:
\begin{align}
\label{eq29}
(\id \otimes \Phi)\circ (\Phi\otimes \id)\circ (\id \otimes \Phi)&=(\Phi \otimes \id)\circ (\id \otimes \tau)\circ (\Phi\otimes \id),
\end{align}
where $\tau:A\otimes A\longrightarrow A\otimes A$ is the usual flip:
\[\tau:\left\{\begin{array}{rcl}
A\otimes A&\longrightarrow& A\otimes A\\
a\otimes b&\longrightarrow&b\otimes a.
\end{array}\right.\]
We shall say that $(A,\Phi)$ is nondegenerate if $\Phi$ is invertible.
\end{defi}

\begin{example}
\begin{enumerate}
\item Let $(\Omega,\rightarrow,\triangleright)$ be an EAS and let $A=\K\Omega$ be its algebra, that is to say the vector space
generated by $\Omega$. We define:
\[\Phi:\left\{\begin{array}{rcl}
\K\Omega\otimes \K\Omega&\longrightarrow&\K\Omega\otimes \K\Omega\\
\alpha\otimes \beta&\longrightarrow&(\alpha \rightarrow \beta)\otimes (\alpha \triangleright \beta),
\end{array}\right.\]
where $\alpha,\beta \in \Omega$. Lemma \ref{lem2.6} implies that $(\K\Omega,\Phi)$ is an $\ell$EAS,
which we call the linearization of $(\Omega,\rightarrow,\triangleright)$. 
\item Here are examples of $\ell$EAS of dimension 2, which are not linearization of an EAS.
In these examples, $A$ is a two-dimensional space with basis $(x,y)$,
and  the maps $\Phi$ are  given by their matrices in the basis $(x\otimes x,x\otimes y,y\otimes x,y\otimes y)$.
\begin{align*}
&\begin{pmatrix}0&0&1&0\\0&0&0&0\\0&0&0&0\\0&0&0&0\end{pmatrix},&
&\begin{pmatrix}0&0&0&0\\0&0&\lambda&0\\0&0&0&0\\0&0&0&0\end{pmatrix},&
&\begin{pmatrix}1&0&0&0\\0&0&0&0\\0&0&0&0\\0&0&0&0\end{pmatrix},&
&\begin{pmatrix}1&0&0&0\\0&0&1&0\\0&0&0&0\\0&0&0&0\end{pmatrix},\\
&\begin{pmatrix}1&0&0&0\\0&0&0&0\\0&1&0&0\\0&0&0&0\end{pmatrix},&
&\begin{pmatrix}1&0&0&0\\0&0&0&0\\0&0&1&0\\0&0&0&0\end{pmatrix},&
&\begin{pmatrix}1&0&0&0\\0&0&0&0\\0&0&0&0\\0&0&1&0\end{pmatrix},&
&\begin{pmatrix}1&0&0&0\\0&0&1&0\\0&0&1&0\\0&0&0&0\end{pmatrix},\\
&\begin{pmatrix}1&0&0&0\\0&0&0&0\\0&1&0&0\\0&0&1&0\end{pmatrix},&
&\begin{pmatrix}1&0&0&0\\0&0&0&0\\0&1&1&0\\0&0&0&0\end{pmatrix},&
&\begin{pmatrix}1&0&0&0\\0&0&1&0\\0&0&1&0\\0&0&1&0\end{pmatrix},&
&\begin{pmatrix}1&0&0&0\\0&0&0&0\\0&1&0&0\\0&1&-1&0\end{pmatrix},\\
&\begin{pmatrix}1&0&0&0\\0&0&0&0\\0&0&0&0\\0&0&0&1\end{pmatrix},&
&\begin{pmatrix}1&0&0&0\\0&0&0&0\\0&1&0&0\\0&0&0&1\end{pmatrix},&
&\begin{pmatrix}1&1&0&0\\0&0&0&0\\0&0&0&0\\0&0&1&1\end{pmatrix},&
&\begin{pmatrix}1&0&0&0\\0&0&0&0\\0&1&0&0\\0&0&1&1\end{pmatrix},\\
&\begin{pmatrix}1&0&1&0\\0&0&-1&0\\0&1&-1&0\\0&0&2&1\end{pmatrix},
\end{align*}
where $\lambda$ is a scalar. More details on these examples car be found in \cite{FoissyEAS}.
\end{enumerate} \end{example}

\begin{notation}
Let $(A,\Phi)$ be a pair, such that $A$ is a vector space and $\Phi:A\otimes A\longrightarrow A\otimes A$
is a linear map. We use the Sweedler notation:
\[\Phi(a\otimes b)=\sum a'\rightarrow b' \otimes a''\triangleright b''.\]
Note that the operations $\rightarrow$ and $\triangleright$ may not exist, nor the coproducts $a'\otimes a''$
or $b'\otimes b''$.
With this notation, (\ref{eq29}) can be rewritten as:
\begin{align}
\tag{\ref{eq29}'}
&\sum\sum\sum a' \rightarrow (b' \rightarrow c')'\otimes 
 (a'' \triangleright (b' \rightarrow c')'')'\rightarrow (b'' \triangleright c'')'
 \otimes  (a'' \triangleright (b' \rightarrow c'))''\triangleright (b'' \triangleright c'')''\\
\nonumber &=\sum\sum 
 (a' \rightarrow b')'\rightarrow c' \otimes (a' \rightarrow b')''\triangleright c''
 \otimes a'' \triangleright b''.
\end{align}\end{notation}

\begin{defi}\label{defialgebras}
Let $V$ be a vector space and $*$ a linear map:
\[*:\left\{\begin{array}{rcl}
A&\longrightarrow&\hom(V\otimes V,V)\\
a&\longrightarrow&*_a:\left\{\begin{array}{rcl}
V\otimes V&\longrightarrow&v\\
x\otimes y&\longrightarrow&x*_a y.
\end{array}\right.
\end{array}\right.\]
We shall say that $(V,*)$ is a $\Phi$-associative algebra if:
\begin{align}
&\forall x,y,z\in V,\:\forall a,b\in A,&
x*_a (y *_b z)&=\sum (x*_{a''\triangleright b''} y)*_{a' \rightarrow b'} z.
\end{align}
We shall say that $(V,*)$ is an opposite $\Phi$-associative algebra if:
\begin{align}
&\forall x,y,z\in V,\:\forall a,b\in A,&
\sum x*_{a'\rightarrow b'} (y *_{a''\triangleright b''} z)&=(x*_b y)*_a z.
\end{align}
\end{defi}

\begin{remark}\begin{enumerate}
\item We define $*^{op}$ by $x *^{op}_a y=y*_a x$. Then $(V,*)$ is a $\Phi$-associative algebra
if, and only if, $(V,*^{op})$ is an opposite $\Phi$-associative algebra.
\item If $\Phi$ is invertible, opposite $\Phi$-associative algebras and $\Phi^{-1}$-associative algebras are the same.
\end{enumerate}\end{remark}

\begin{remark}
If $(\Omega,\rightarrow,\triangleright)$ is an EAS and if $(\K\Omega,\Phi)$ is its linearization,
then the categories of $\Omega$-associative algebras and of $\Phi$-associative algebras are isomorphic:
if $(V,(*_\alpha)_{\alpha\in \Omega})$ is an $\Omega$-algebra, then we obtain a $\Phi$-associative algebra with the map:
\[*:\left\{\begin{array}{rcl}
\K\Omega&\longrightarrow&\hom(V\otimes V,V)\\
\alpha \in \Omega&\longrightarrow&*_\alpha.
\end{array}\right.\]
In this way, $\Omega$-associative algebras can be seen as particular examples of $\Phi$-associative algebras.
\end{remark}

\subsection{Free objects}

\begin{notation}
Let $V$ be a vector space.  We put:
\[T_A(V)=\bigoplus_{n=1}^\infty A^{\otimes (n-1)}\otimes V^{\otimes n}.\]
If $a_1,\ldots, a_{n-1}\in A$, $x_1,\ldots,x_n \in V$, we shall denote their tensor product in $A^{\otimes (n-1)}\otimes V^{\otimes n}$
by $a_1\ldots a_{n-1}x_1\ldots x_n$. Such a tensor will be called an \emph{$A$-typed word of length} $n$. 
We shall use the following map:
\[\cdot:\left\{\begin{array}{rcl}
T_A(V)\otimes A\otimes V&\longrightarrow&T_A(V)\\
a_1\ldots a_{n-1}x_1\ldots x_n\otimes a\otimes x&\longrightarrow&
a_1\ldots a_{n-1}x_1\ldots x_n\cdot ax=aa_1\ldots a_{n-1}ax_1\ldots x_nx.
\end{array}\right.\]
\end{notation}

\begin{theo}\label{theo3.2}
For any vector space $V$, we define bilinear products $*_a$ on $T_A(V)$ in the following way,
by induction on the length of $A$-typed words:
\begin{align*}
w*_a z&=w\cdot a z,&
u*_a (v\cdot b z)&=\sum (u*_{a'' \triangleright b''} v)\cdot (a' \rightarrow b')z,
\end{align*}
where $u,v,w\in T_A(V)$, $z\in V$ and $a,b \in A$. 
The following conditions are equivalent:
\begin{enumerate}
\item $(A,\Phi)$ is an $\ell$EAS.
\item For any vector space $V$, $(T_A(V),*)$ is a $\Phi$-associative algebra.
\item There exists a nonzero vector space $V$ such that $(T_A(V),*)$ is a $\Phi$-associative algebra.
\end{enumerate}
Moreover, if these conditions hold, then $(T_A(V),*)$ is the free $\Phi$--associative algebra generated by $V$.
\end{theo}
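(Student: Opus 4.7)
The proof decomposes into (2) $\Rightarrow$ (3) (trivial), (3) $\Rightarrow$ (1), (1) $\Rightarrow$ (2), and the freeness claim.

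For (3) $\Rightarrow$ (1), I plan to test the $\Phi$-associativity relation on the smallest inputs in $T_A(V)$ that actually see the axiom. Fixing a nonzero $v \in V$, I take $x = y = v$ of length $1$ and $z = cvv \in A \otimes V^{\otimes 2}$ of length $2$, with $a, b, c \in A$ arbitrary. Two applications of the recursive definition of $*$ expand each side of the axiom as a length-$4$ word in $A^{\otimes 3} \otimes V^{\otimes 4}$. A direct tracking of the three applications of $\Phi$ appearing on each side shows that the coefficients in $A^{\otimes 3}$ are the two sides of (\ref{eq29}) evaluated at $a \otimes b \otimes c$, read with the tensor factors reversed. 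Cancelling the nonzero factor $v^{\otimes 4}$ and noting that reversal is an involution on $A^{\otimes 3}$, one recovers (\ref{eq29}).

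The direction (1) $\Rightarrow$ (2) is where the real work lies. I would proceed by induction on the length $n$ of the rightmost argument $z$. The base case $n = 1$ is a one-line check: both sides of the associativity relation collapse to $\sum (x *_{a'' \triangleright b''} y) \cdot (a' \rightarrow b') z$ via the base clause of the recursive definition. For $n \geq 2$, I write $z = z_1 \cdot cw$ with $w \in V$ and $z_1$ of length $n-1$, then expand both sides of the associativity relation by the recursive rule for $*$. On the left, an additional application of the induction hypothesis to the product $x *_{?} (y *_{?} z_1)$ (where $z_1$ is strictly shorter than $z$) converts the expression into a form matching the right-hand side, \emph{provided} the triple Sweedler sum in $A^{\otimes 3}$ coming from the outer products equals the corresponding triple sum on the right. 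The combinatorial content of this equality is exactly (\ref{eq29}). The main obstacle here is purely notational: the nested Sweedler indices multiply rapidly, and the cleanest bookkeeping is to track each side as the iterated composition of $\id \otimes \Phi$, $\Phi \otimes \id$, and $\tau$ dictated by the recursion, which is precisely what (\ref{eq29}) controls.

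For the freeness statement, let $(W, *)$ be any $\Phi$-associative algebra and $f : V \to W$ a linear map. I define $\tilde f : T_A(V) \to W$ on a typed word by
\[\tilde f(a_1 \ldots a_{n-1} x_1 \ldots x_n) = \bigl( \cdots \bigl((f(x_1) *_{a_1} f(x_2)) *_{a_2} f(x_3)\bigr) \cdots \bigr) *_{a_{n-1}} f(x_n),\]
which is linear and extends $f$. That $\tilde f$ is a morphism, i.e.\ $\tilde f(u *_a v) = \tilde f(u) *_a \tilde f(v)$, is proved by induction on the length of $v$: the base case reduces directly to the defining recursion for $\tilde f$, while the inductive step combines that recursion with the induction hypothesis and one application of the $\Phi$-associativity of $W$. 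Uniqueness follows from the identity $a_1 \ldots a_{n-1} x_1 \ldots x_n = \bigl( \cdots (x_1 *_{a_1} x_2) \cdots \bigr) *_{a_{n-1}} x_n$ in $T_A(V)$, which forces the value of any extension of $f$.
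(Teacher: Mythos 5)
Your proof is correct and follows essentially the same route as the paper's: the same test (a length-$2$ typed word in the third slot, with the $A^{\otimes 3}$ coefficient read off in reversed order) for $3\Rightarrow 1$, the same induction on the length of the rightmost argument combined with one use of (\ref{eq29}) for $1\Rightarrow 2$, and the same left-nested recursive extension with the same uniqueness argument for freeness. Only a cosmetic slip: the right-hand side of (\ref{eq29}) is realized by two applications of $\Phi$ and one flip rather than three applications of $\Phi$, but this does not affect the argument.
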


\begin{proof} Obviously, $2.\Longrightarrow 3$. \\

$3.\Longrightarrow 1$. Let $x,y,z,t$ be four nonzero elements of any nonzero vector space $V$. 
For any $a,b,c\in A$:
\begin{align*}
x *_a (y*_b c zt)&=
\sum\sum\sum (a'' \triangleright (b' \rightarrow c'))''\triangleright (b'' \triangleright c'')''\\
&\hspace{2cm}(a'' \triangleright (b' \rightarrow c')'')'\rightarrow (b'' \triangleright c'')'\\
&\hspace{2cm}a' \rightarrow (b' \rightarrow c')' xyzt,\\
\sum (x*_{a' \triangleright b'} y)*_{a'' \rightarrow b''} c zt
&=\sum\sum (a'' \triangleright b'')((a' \rightarrow b')''\triangleright c'')((a' \rightarrow b')'
\rightarrow c')  xyzt.
\end{align*}
This immediately gives (\ref{eq29}).\\

$1.\Longrightarrow 2$. Let us prove that for any $A$-typed words $u,v,w$, for any $a,b \in A$,
\[u*_a (v *_b w)=\sum(u*_{a' \triangleright b'} v)*_{a'' \rightarrow b''} w.\]
We proceed by induction on the length $n$ of $w$. If $n=1$, we put $w=z\in V$. 
\begin{align*}
u*_a (v *_b w)&=u*_a (v\cdot b z)=\sum(u*_{a' \triangleright b'} v)\cdot (a'' \rightarrow b'') z
=\sum(u*_{a' \triangleright b'} v)*_{a'' \rightarrow b''} z.
\end{align*}
Let us assume the result at rank $n-1$. We put $w=w'\cdot c z$. 
\begin{align*}
u*_a (v *_b w)&=\sum u*_a((v*_{b' \triangleright c'}w')\cdot (b'' \rightarrow c'')z)\\
&=\sum\sum u *_{a'' \triangleright(b' \rightarrow c')''}(v*_{b'' \triangleright c''} w')\cdot 
a'\rightarrow(b' \rightarrow c')'' z\\
&=\sum\sum\sum(u*_{(a'' \triangleright(b' \rightarrow c')'')'' \triangleright (b'' \triangleright c'')''}v)
*_{(a'' \triangleright(b' \rightarrow c')'')' \rightarrow (b'' \triangleright c'')'} w'
\cdot a'\rightarrow(b' \rightarrow c')' z\\
&=\sum\sum(u*_{a'' \triangleright b''}v)*_{(a' \rightarrow b')'' \triangleright c''} w'
\cdot (a'\rightarrow b')' \rightarrow c' z\\
&=\sum(u*_{a'' \triangleright b''}v)*_{a' \rightarrow b'} (w'\cdot c z)\\
&=\sum(u*_{a'' \triangleright b''}v)*_{a' \rightarrow b'} w.
\end{align*}
We use the induction hypothesis for the third equality and (\ref{eq29})  for the fourth one.\\

\textit{Freeness}. Let us assume that conditions 1 and 2 hold. Let $W$ be an $A$-associative algebra,
and $\theta:V\longrightarrow W$ be any linear map. Let us prove that it can be uniquely extended as
a $\Phi$-associative algebra morphism $\Theta$ from $T_A(V)$ to $W$. \\

\textit{Existence of $\Theta$}. We inductively define $\Theta(w)$ for any $A$-typed word $w$ by induction on its length $n$.
If $n=1$, then $w\in V$ and we put $\Theta(w)=\theta(w)$. Otherwise, let us write $w=w'\cdot a z$. We put:
\[\Theta(w)=\Theta(w')*_a\theta(z).\]
Let $u,v$ be two $A$-typed words, and let us prove that for any $a \in A$,
$\Theta(u*_a v)=\Theta(u)*_a \Theta(v)$. We proceed by induction on the length $n$ of $v$.
If $n=1$, then by definition of $\Theta$, this is true. Otherwise, let us put $v=v'\cdot b z$. Then:
\begin{align*}
\Theta(u*_a v)&=\sum \Theta((u*_{a'' \triangleright b''} v')\cdot (a' \rightarrow b') z)\\
&=\sum\Theta(u*_{a'' \triangleright b''} v')*_{a' \rightarrow b'} \theta(z)\\
&=\sum(\Theta(u)*_{a'' \triangleright b''}\Theta( v'))*_{a' \rightarrow b'} \theta(z)\\
&=\Theta(u)*_a (\Theta(v')*_b \theta(z))\\
&=\Theta(u)*_a \Theta(v).
\end{align*}
So $\Theta$ is a $\Phi$-associative algebra morphism.\\

\textit{Uniqueness of $\Theta$}. If $\Theta'$ is another morphism extending $\theta$, for any $A$-typed word $u$, for any $a\in A$,
for any $z\in V$:
\[\Theta'(u\cdot a z)=\Theta'(u*_a z)=\Theta'(u)*_a \theta(z).\]
An easy induction on the length proves that for any $A$- typed word $u$, $\Theta'(u)=\Theta(u)$. \end{proof}

\subsection{Links with associative algebras}

\begin{prop} \label{prop3.3}
Let $(A,\Phi)$ such that $A$ is a vector space and $\Phi:A\otimes A\longrightarrow A\otimes A$
is a linear map. We assume that $V$ is a vector space and $*:A\longrightarrow\hom(V\otimes V,V)$ is a linear map. 
We define a product on $A\otimes V$ by:
\begin{align*}
&\forall x,y\in V,\:\forall a,b \in A,&
xa \star yb&=\sum x *_{a'' \triangleright b''} y a' \rightarrow b'.
\end{align*}
Then:
\begin{enumerate}
\item If $(A,\Phi)$ is an $\ell$EAS and $(V,*)$ is
a $\Phi$-associative algebra, then $(A\otimes V,\star)$ is an associative algebra.
\item  If $(A,\Phi)$ is a nondegenerate $\ell$EAS and $(A\otimes V,\star)$ is an associative algebra,
then $(V,*)$ is a $\Phi$-associative algebra.
\item Let $V$ be a nonzero vector space. If $(A\otimes T_A(V),\star)$ is an associative algebra, then $(A,\Phi)$ is an $\ell$EAS.
\end{enumerate}
\end{prop}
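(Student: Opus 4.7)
My plan is to handle all three parts through a single Sweedler-level computation of both sides of the associativity of $\star$, performed in $V\otimes A$ for parts (1) and (2) and in $T_A(V)\otimes A$ for part (3), followed in each case by either a direct matching, an inversion, or a specialization argument. I would first expand $(xa\star yb)\star zc$ and $xa\star(yb\star zc)$ by two applications of the defining formula of $\star$; setting $\Lambda(\alpha,\beta)=(x*_\alpha y)*_\beta z$ and $\rho(\alpha,\beta)=x*_\alpha(y*_\beta z)$, one obtains
\[(xa\star yb)\star zc=\sum \Lambda(P_3,P_2)\otimes P_1,\qquad xa\star(yb\star zc)=\sum\rho(R_2,R_3)\otimes R_1,\]
where $P_1\otimes P_2\otimes P_3=\Psi_r(a\otimes b\otimes c)$ with $\Psi_r=(\Phi\otimes\id)\circ(\id\otimes\tau)\circ(\Phi\otimes\id)$, and $R_1\otimes R_2\otimes R_3=\Theta(a\otimes b\otimes c)$ with $\Theta=(\Phi\otimes\id)\circ(\id\otimes\Phi)$. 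Observe that $\Psi_l=(\id\otimes\Phi)\circ\Theta$ is the left-hand side of the $\ell$EAS axiom (\ref{eq29}) and that $\Phi$-associativity of $(V,*)$ is the functional identity $\rho=\Lambda\circ\tau\circ\Phi$.

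For (1), the $\ell$EAS axiom $\Psi_r=\Psi_l$ rewrites $(P_1,P_2,P_3)$ as the Sweedler of $(\id\otimes\Phi)\Theta(a\otimes b\otimes c)$; combining this with $\Phi$-associativity of $V$ immediately turns the right-hand expression into the left-hand one. For (2), I would introduce the linear map $F\colon A^{\otimes 3}\to V\otimes A$ defined by $F(\alpha\otimes\beta\otimes\gamma)=(\rho-\Lambda\circ\tau\circ\Phi)(\beta\otimes\gamma)\otimes\alpha$. The same rewriting using $\Psi_r=\Psi_l$ identifies the associativity defect of $\star$ with $F\circ\Theta$ applied to $a\otimes b\otimes c$, so associativity of $\star$ gives $F\circ\Theta=0$ on $A^{\otimes 3}$. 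Since $\Phi$ is invertible, so is $\Theta$, whence $F=0$, and picking any nonzero $\alpha\in A$ forces $\rho=\Lambda\circ\tau\circ\Phi$, that is, $\Phi$-associativity of $(V,*)$.

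For (3), I would restrict to $x,y,z\in V\subset T_A(V)$. The formulas of Theorem \ref{theo3.2} give $x*_\alpha y=\alpha xy$ and turn the subsequent products into length-$3$ $A$-typed words, yielding
\[(xa\star yb)\star zc=\sum P_3P_2\,xyz\otimes P_1,\qquad xa\star(yb\star zc)=\sum Q_3Q_2\,xyz\otimes Q_1,\]
with $Q_1\otimes Q_2\otimes Q_3=\Psi_l(a\otimes b\otimes c)$. Specializing $x=y=z=u$ for any nonzero $u\in V$ and using $u\otimes u\otimes u\ne 0$ in $V^{\otimes 3}$, associativity collapses to $\sum P_3\otimes P_2\otimes P_1=\sum Q_3\otimes Q_2\otimes Q_1$ in $A^{\otimes 3}$; flipping positions $1$ and $3$ gives $\Psi_r=\Psi_l$, which is (\ref{eq29}). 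The main obstacle throughout will be the careful bookkeeping of two nested Sweedler expansions of $\Phi$ across both tensor slots and $*$-subscripts; once packaged via $\Psi_r$, $\Psi_l$, $\Theta$, the three implications reduce respectively to the $\ell$EAS axiom with $\Phi$-associativity, to invertibility of $\Theta$, and to a nonzero-vector specialization.
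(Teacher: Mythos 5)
Your proposal is correct and follows essentially the same route as the paper: both sides of associativity are identified with $(\Phi\otimes\id)\circ(\id\otimes\tau)\circ(\Phi\otimes\id)$ and $(\Phi\otimes\id)\circ(\id\otimes\Phi)$ applied to $a\otimes b\otimes c$, part (1) is a direct matching via (\ref{eq29}), part (2) precomposes with the inverse of $\Theta$ (the paper's $\Psi$), and part (3) reads off the $A^{\otimes 3}$ components from length-$3$ typed words built on nonzero vectors of $V$. The only cosmetic discrepancy is the order of the two $A$-letters in the length-$3$ words of part (3), which is applied consistently to both sides and therefore does not affect the conclusion.
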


\begin{proof}
Let $a,b,c \in A$ and $x,y,z\in V$. In $A\otimes V$:
\begin{align}
\label{EQ9}
xa \star(yb\star zc)&=\sum\sum x*_{a'' \triangleright (b' \rightarrow c')''} (y*_{b'' \triangleright c''}z)
a' \rightarrow (b' \rightarrow c')',\\
\nonumber (xa\star yb)\star zc&= \sum\sum (x*_{a'' \triangleright b''}y)*_{(a' \rightarrow b')''\triangleright c''} z
(a' \rightarrow b')'\rightarrow c'.
\end{align}

1. We put 
\[x=\sum\sum a' \rightarrow (b' \rightarrow c')'\otimes a'' \triangleright (b' \rightarrow c')''
\otimes b'' \triangleright c''=(\Phi\otimes \id)\circ (\id \otimes \Phi)(a\otimes b\otimes c).\]
By (\ref{eq29}):
\begin{align*}
(\id \otimes \Phi)(x)&=(\Phi\otimes \id_A)\circ (\id_A \otimes \tau)\circ (\Phi\otimes \id_A)(a\otimes b\otimes c)\\
&=\sum\sum (a' \rightarrow b')'\rightarrow c'\otimes (a' \rightarrow b')''\triangleright c''
\otimes a'' \triangleright b''.
\end{align*}
As $V$ is $\Phi$-associative, we obtain that $\star$ is associative.\\

2. By composition, the following map is bijective:
\[\Psi=(\Phi\otimes \id_A)\circ (\id_A\times \Phi):
\left\{\begin{array}{rcl}
V^{\otimes 3}&\longrightarrow&V^{\otimes 3}\\
a\otimes b\otimes c&\longrightarrow&a' \rightarrow(b' \rightarrow c')'\otimes
a'' \triangleright (b' \rightarrow c')'' \otimes b'' \triangleright c''.
\end{array}\right.\]
Let $a\otimes b\otimes c\in A^{\otimes 3}$ and $a_1\otimes b_1\otimes c_1=\Psi^{-1}(a\otimes b\otimes c)$. 
For any $x,y,z\in A$:
\begin{align*}
xa_1 \star (yb_1 \star zc_1)&=\left(x*_b (y*_c z)\right)a,\\
(xa_1\star yb_1)\star zc_1&=\sum\left((x*_{b'' \triangleright c''} y)*_{b' \rightarrow c'} z \right)a.
\end{align*}
The associativity of $\star$ induces the axiom of $\Phi$-associative algebra for $V$.\\

3. Let $x,y,z\in V$, nonzero (not necessarily distinct). 
From the associativity of $\star$, we immediately deduce from (\ref{EQ9}) that:
\begin{align*}
&\sum \sum \sum
a' \rightarrow (b' \rightarrow c')'\otimes 
(a'' \triangleright (b' \rightarrow c')'')'\rightarrow (b'' \triangleright c'')'\otimes
(a'' \triangleright (b' \rightarrow c')'')''\triangleright (b'' \triangleright c)''\\
&=\sum\sum(a' \rightarrow b')' \rightarrow c'\otimes
(a' \rightarrow b')''\triangleright c''\otimes a'' \triangleright b''.
\qedhere \end{align*}
So $(A,\Phi)$ is an $\ell$EAS.
\end{proof}

\begin{remark}
As a corollary, if $(\Omega,\rightarrow,\triangleright)$ is an EAS, then $\Omega$-associative algebras
are 2-parameters associative algebras with $*_{\alpha,\beta}=*_{\alpha \triangleright \beta}$. 
This will be formalized in Proposition \ref{prop3.14} by an operad morphism.
\end{remark}

\begin{prop}\label{prop3.4}
Let $(A,\Phi)$ be an $\ell$EAS and let $V$ be a nonzero vector space.
\begin{enumerate}
\item The following conditions are equivalent:
\begin{enumerate}
\item The associative algebra $T_A(V)\otimes A$ is generated by $V\otimes A$.
\item $\Phi$ is surjective.
\end{enumerate}
\item The following conditions are equivalent:
\begin{enumerate}
\item The subalgebra $T_A(V)\otimes A$ generated by $V\otimes A$ is free.
\item $\Phi$ is injective.
\end{enumerate}
\end{enumerate}
\end{prop}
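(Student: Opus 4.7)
The plan is to reduce both equivalences to an explicit formula for the iterated product of $n$ generators in $T_A(V)\otimes A$. Setting $\tilde{\Phi}:=\tau\circ\Phi$, the definitions in Proposition~\ref{prop3.3} and Theorem~\ref{theo3.2} give
\[x_1a_1\star x_2a_2 = \sum (a_1''\triangleright a_2'')\,x_1x_2\otimes(a_1'\rightarrow a_2'),\]
which, under the natural isomorphism $A\otimes V^{\otimes 2}\otimes A\cong A^{\otimes 2}\otimes V^{\otimes 2}$, equals $\tilde{\Phi}(a_1\otimes a_2)\otimes x_1\otimes x_2$. The first key step is to prove by induction on $n$ that
\[x_1a_1\star\cdots\star x_na_n = \Psi_n(a_1\otimes\cdots\otimes a_n)\otimes x_1\otimes\cdots\otimes x_n,\]
where $\Psi_n:=\tilde{\Phi}_{n-1,n}\circ\tilde{\Phi}_{n-2,n-1}\circ\cdots\circ\tilde{\Phi}_{12}:A^{\otimes n}\to A^{\otimes n}$ and each $\tilde{\Phi}_{i,i+1}$ denotes $\tilde{\Phi}$ acting on tensor slots $i,i+1$. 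The inductive step exploits the recursion $w*_a z=w\cdot az$ for $z\in V$: this appends the new $A$-letter at the end of the current $A$-word, so that the next $\star$ with a fresh generator acts on the outer slot and the new letter precisely via $\tilde{\Phi}$.

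Second, the space $T_A(V)\otimes A$ is graded by $A$-typed word length, with length-$n$ component $A^{\otimes(n-1)}\otimes V^{\otimes n}\otimes A\cong A^{\otimes n}\otimes V^{\otimes n}$, and the above formula identifies the canonical product map $\mu_n:(V\otimes A)^{\otimes n}\to T_A(V)\otimes A$ with $\id_{V^{\otimes n}}\otimes\Psi_n$. It follows that $V\otimes A$ generates $T_A(V)\otimes A$ iff each $\Psi_n$ is surjective, and that the subalgebra generated by $V\otimes A$ is free on $V\otimes A$ iff each $\mu_n$ is injective, i.e., iff each $\Psi_n$ is injective (since freeness amounts to the canonical graded morphism from the tensor algebra $T(V\otimes A)$ being an isomorphism onto that subalgebra).

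Third, I relate $\Psi_n$ to $\Phi$. Since $\tau$ is bijective, $\tilde{\Phi}$ is surjective (respectively injective) iff $\Phi$ is, and each $\tilde{\Phi}_{i,i+1}=\id^{\otimes(i-1)}\otimes\tilde{\Phi}\otimes\id^{\otimes(n-i-1)}$ inherits both properties from $\tilde{\Phi}$ by flatness over a field. Hence $\Phi$ surjective implies every $\Psi_n$ is surjective, so $V\otimes A$ generates; conversely, if $V\otimes A$ generates, then in particular the length-$2$ component is reached, forcing $\Psi_2=\tilde{\Phi}$ surjective and hence $\Phi$ surjective. The identical reasoning with ``injective'' in place of ``surjective'' yields part~2.

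The main obstacle I anticipate is the inductive bookkeeping: one must carefully verify that the insertion convention in the $\cdot$ operation makes the $n$-th application of $\tilde{\Phi}$ act on precisely the outer slot and the freshly adjoined letter, so that the iterated product produces the ordered composition $\tilde{\Phi}_{n-1,n}\circ\cdots\circ\tilde{\Phi}_{12}$ rather than some other word in the $\tilde{\Phi}_{i,i+1}$. Once this combinatorial core is established, the equivalences are essentially formal.
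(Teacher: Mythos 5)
Your proof is correct and follows essentially the same route as the paper: the explicit formula $x_1a_1\star\cdots\star x_na_n=(\id_{V^{\otimes n}}\otimes\Psi_n)(\cdots)$ with $\Psi_n$ a composition of copies of $\tau\circ\Phi$ acting on adjacent slots is exactly the map $\Phi'=\tau\circ\Phi$ computation the paper uses, and the reduction of both generation and freeness to surjectivity/injectivity of $\Psi_2=\tau\circ\Phi$ via the length grading matches the paper's degree arguments. The only (cosmetic) difference is that you derive all four implications uniformly from the single formula for $\Psi_n$, whereas the paper handles the implications $(b)\Rightarrow(a)$ of part 1 by an explicit inductive construction of preimages and the implications $(a)\Rightarrow(b)$ by direct degree-two computations.
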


\begin{proof}
We denote by $W$ the subalgebra of $T_A(V)\otimes A$ generated by $V\otimes A$.
Note that it is graded by the length of words.\\

1. $(a)\Longrightarrow (b)$. Let $a\otimes b\in A^{\otimes 2}$. Let us choose a nonzero element $x$ of $V$.
Then $xxa b \in A$. Because of the graduation, we can write this element under the form:
\[xxa b=\sum_{i=1}^n x_i a_i\star y_i b_i= \sum_{i=1}^n \sum x_i y_i (a_i''\triangleright b''_i)
(a_i'\rightarrow b_i').\]
Applying an element $f$ of $V^*$ such that $f(x)=1$, we obtain
\[\Phi\left(\sum_{i=1}^n f(x_i)f(y_i)a_i\otimes b_i\right)=a\otimes b,\]
 so $\Phi$ is surjective.\\

1. $(b)\Longrightarrow (a)$. Let $x_1\ldots x_n a_1\ldots a_n$ be a word of length $n$,
and let us prove that it belongs to $W$ by induction on $n$. This is obvious if $n=1$. 
Otherwise, there exists $x=\sum b_{n-1}\otimes b_n \in A^{\otimes 2}$, such that 
\[\Phi\left(\sum b_{n-1}\otimes b_n\right)=a_n \otimes a_{n-1}.\] 
By the induction hypothesis, $x_1\ldots x_{n-1}a_1\ldots a_{n-2}b_{n-1}\in W$,
so:
\begin{align*}
\sum x_1\ldots x_{n-1}a_1\ldots a_{n-2}b_{n-1}\star x_n b_n
&=\sum \sum x_1\ldots x_n a_1\ldots a_{n-2}(b''_{n-1}\triangleright b''_n)(b'_{n-1}\rightarrow b'_n)\\
&=x_1\ldots x_n a_1\ldots a_n \in W.
\end{align*}

2. $(a)\Longrightarrow (b)$. Because of the graduation, $W$ is freely generated by $V\otimes A$.
Let $x$ be a nonzero element of $V$.
If $\sum a_n\otimes b_n\neq 0$, by freeness, $\sum xa_n\star xb_n\neq 0$ and:
\begin{align*}
\sum xa_n\star xb_n&=\sum\sum xx(a_n'' \triangleright b_n'')(a_n' \rightarrow b_n')\neq 0,
\end{align*}
So $\Phi\left(\sum a_n \otimes b_n\right)\neq 0$.\\

2. $(b)\Longrightarrow (a)$. We shall use the following map:
\[\Phi'=\tau \circ \Phi:\left\{\begin{array}{rcl}
A\otimes A&\longrightarrow&A\otimes A\\
a\otimes b&\longrightarrow&\sum a'' \triangleright b'' \otimes a' \rightarrow b'.
\end{array}\right.\]
As $\Phi$ is injective, $\Phi'$ is injective. Let $x_1,\ldots,x_n\in V$ and let $a_1,\ldots,a_n\in A$.
An easy induction on $n$ proves that:
\begin{align*}
&x_1a_1\star \ldots\star x_na_n\\
&=x_1\ldots x_n \left(\id_A^{\otimes (n-2)}\otimes \Phi'\right)\circ
\left(\id_A^{\otimes (n-3)}\otimes \Phi'\otimes \id_A\right)\circ \ldots
\circ \left(\Phi'\otimes\id_A^{\otimes (n-2)}\right)(a_1\otimes \ldots \otimes a_n).
\end{align*}
As a consequence, the following algebra map is injective:
\[\left\{\begin{array}{rcl}
T(V\otimes A)&\longrightarrow&T_A(V)\otimes A\\
x_1a_1\ldots x_na_n&\longrightarrow&x_1a_1\star\ldots \star x_n a_n.
\end{array}\right.\]
So the  image of this morphism, which is $W$,  is freely generated by $V\otimes A$. \end{proof}

\begin{remark}
Consequently, for any vector space $V$, $(T_A(V),\star)$ is freely generated by $V\otimes A$
if, and only if, $(A,\Phi)$ is nondegenerate.
\end{remark}

\subsection{Operadic aspects and Koszul duality}

In this section, $(A,\Phi)$ is an $\ell$EAS.\\

\begin{notation}
We denote  the nonsymmetric operad of $\Phi$-associative algebras by $\as_\Phi$,
and the nonsymmetric operad of opposite $\Phi$-associative algebras by $\as_\Phi'$.
In other words, $\as_\phi$ is the nonsymetric operad generated by $A=\as_\phi(2)$, with the relations
\[a\circ_2 b=\sum a'\rightarrow b'\circ_1 a''\triangleright b'',\]
whereas $\as_\phi'$ is the nonsymetric operad generated by $A=\as_\phi'(2)$, with the relations
\[a\circ_1 b=\sum a'\rightarrow b'\circ_2 a''\triangleright b''.\]
We denote by $\sym\as_\Phi$, respectively by $\sym \as_\Phi'$, the operad of $\Phi$-associative algebras,
respectively of opposite $\Phi$-associative algebras.
\end{notation}

\begin{remark}\label{remark5}
\begin{enumerate}
\item $\sym \as_\phi$, respectively $\sym \as_\Phi'$, is the symmetrisation of the nonsymmetric operad
$\as_\Phi$, respectively $\as_\phi'$.
\item If $\Phi$  is nondegenerate, then $\as_\Phi'=\as_{\Phi^{-1}}$.
\item $\sym \as_\Phi$ and $\sym\as_\Phi'$ are isomorphic operads, through the morphism
\[\left\{\begin{array}{rcl}
\sym\as_\Phi&\longrightarrow&\sym \as_\Phi'\\
a\in A&\longrightarrow&a^{op}=a^{(12)}.
\end{array}\right.\]
\end{enumerate}\end{remark}

From the description of free $\Phi$-associative algebras, we obtain a combinatorial description of $\as_\Phi$:

\begin{prop}\label{prop3.5}
For any $n\geqslant 1$, $\as_\Phi(n)$ is the vector space $A^{\otimes (n-1)}$. 
For any $a_k\ldots a_1 \in A^{\otimes k}=\as_\Phi(k+1)$,
for any $b_l\ldots b_1\in A^{\otimes l}=\as_\Phi(l+1)$, for any $i\in [k+1]$:
\begin{align*}
&a_k\ldots a_1\circ_i b_l\ldots b_1\\
&=\begin{cases}
b_l\ldots b_1 a_k\ldots a_1\mbox{ if }i=1,\\
a_k\ldots a_i (\Phi\otimes \id^{\otimes (l-2)})\circ \ldots
\circ (\id\otimes \Phi\otimes \id^{\otimes (l-3)})
\circ (\id^{\otimes (l-2)}\otimes \Phi)(a_{i-1}b_l\ldots b_1)a_{i-2}\ldots a_1\\
\hspace{1cm}\mbox{ if }i\geqslant 2.
\end{cases}
\end{align*}
\end{prop}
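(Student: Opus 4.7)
The plan is to derive the statement from the explicit description of the free $\Phi$-associative algebra in Theorem \ref{theo3.2}, by realising $\as_\Phi(n)$ as the multilinear component of $T_A(V)$ on $n$ generators. Concretely, for a nonsymmetric operad, $\as_\Phi(n)$ is canonically isomorphic to the subspace of the free algebra on generators $\{x_1,\ldots,x_n\}$ consisting of those elements in which the string $x_1\ldots x_n$ appears (each variable exactly once, in order). By Theorem \ref{theo3.2}, this subspace is $A^{\otimes(n-1)}\otimes (x_1\otimes\cdots\otimes x_n)$, which I would identify with $A^{\otimes(n-1)}$. The identification of a tensor $a_k\ldots a_1$ with the corresponding multilinear operation then follows by iterating the first inductive rule $w *_a z = w\cdot az$: a left-comb product of the form $(\cdots((x_1 *_{c_1} x_2) *_{c_2} x_3)\cdots) *_{c_k} x_{k+1}$ produces a typed word whose $A$-part reads off $c_1\ldots c_k$, so every element of $A^{\otimes k}$ arises this way from a unique operation of arity $k+1$.

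The $i=1$ composition is then immediate. Substituting a typed word $\beta$ of length $l+1$ into the first slot of $\alpha$ requires computing $\beta *_{a_1} x_{l+2}$, then $*_{a_2} x_{l+3}$, and so on. Each of these uses only the first rule $w *_a z = w\cdot az$ and hence merely appends one letter to the $A$-part and one to the $V$-part. The resulting typed word therefore has $A$-part equal to the concatenation of the $A$-parts of $\beta$ and $\alpha$, which is exactly the formula $b_l\ldots b_1 a_k\ldots a_1$ stated for $\circ_1$.

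The substantive content of the proposition is the formula for $i\geq 2$. I would handle it through a lemma computing $u *_a v$ for two arbitrary typed words, proved by induction on the length $l+1$ of $v$. Writing $v = v'\cdot bz$ and applying the second rule $u *_a (v'\cdot bz) = \sum (u *_{a''\triangleright b''} v')\cdot (a'\rightarrow b')z$, one ``peels off'' the rightmost $(b,z)$ from $v$: the surviving component $a'\rightarrow b'$ is placed in the corresponding slot of the output, while the migrating component $a''\triangleright b''$ continues to interact with the next letter to its left via another application of $\Phi$. After $l$ such peels one obtains a chain of shifted $\Phi$-applications acting on the tensor $a\otimes b_l\otimes\cdots\otimes b_1$, which matches the formula in the proposition when specialised to $a = a_{i-1}$. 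The prefix $a_k\ldots a_i$ and suffix $a_{i-2}\ldots a_1$ in the statement then arise from the positions of $\alpha$ that are untouched by the substitution, handled by the same left-comb reasoning as for $\circ_1$. The main obstacle will be the bookkeeping: tracking the exact order in which successive $\Phi$'s are applied, reconciling the reversed-index convention $a_k\ldots a_1$ of the statement with the left-to-right ordering of typed words, and correctly handling the degenerate small cases (such as $l=1$, where the iterated $\Phi$-chain collapses to a single $\Phi$, and $i\in\{1,k+1\}$, where one of the untouched prefix/suffix is empty).
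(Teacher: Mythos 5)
Your proposal is correct and takes essentially the same route as the paper, which offers no written proof beyond the remark that the proposition follows ``from the description of free $\Phi$-associative algebras'': identifying $\as_\Phi(n)$ with the multilinear component of $T_A(V)$ via Theorem \ref{theo3.2}, reading off operations from left-comb products, and computing $\circ_i$ by peeling the rightmost letter of the inserted word with the rule $u*_a(v\cdot bz)=\sum (u*_{a''\triangleright b''}v)\cdot(a'\rightarrow b')z$ is exactly the intended argument (the same chain of shifted $\Phi$'s appears explicitly in the paper's proof of Proposition \ref{prop3.4}). The bookkeeping you defer is indeed where all the content lies --- in particular you should verify carefully which letter $a_{i-1}$ of the outer word enters the $\Phi$-chain and where the chain sits relative to the untouched prefix and suffix, since the reversed indexing $a_k\ldots a_1$ makes this easy to get off by a reflection.
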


\begin{example}
Let us consider linearizations of EAS.
\begin{enumerate}
\item  For $\eas(A,\star)$, this simplifies as:
\begin{align*}
\alpha_1\ldots \alpha_k\circ_i \beta_1\ldots \beta_l&=\alpha_1\ldots \alpha_{i-1}
(\alpha_{i-1}\star \beta_1)\ldots (\alpha_{i-1}\star \beta_l)\alpha_i\ldots \alpha_k.
\end{align*}
\item For $\eas(\Omega)$, this simplifies as:
\begin{align*}
\alpha_1\ldots \alpha_k\circ_i \beta_1\ldots \beta_l&=\alpha_1\ldots \alpha_{i-1}
\beta_1\ldots \beta_l\alpha_i\ldots \alpha_k.
\end{align*}
This operad is used in \cite{CombeGiraudo}. When $\Omega$ has two elements, this gives the operad 
of duplexes of vertices of cubes defined in \cite[Section 6.3]{Pirashvili}.
\item If $(A,\star)$ is a group, we obtain for $\eas'(A,\star)$:
\begin{align*}
\alpha_1\ldots \alpha_k\circ_i \beta_1\ldots \beta_l&=\alpha_1\ldots \alpha_{i-2}
\left(\alpha_{i-1}\star \beta_l^{-1}\star \ldots\star\beta_1^{-1}\right)\beta_1\ldots \beta_l \alpha_i\ldots \alpha_k.
\end{align*}\end{enumerate}\end{example}

\begin{prop}\label{prop3.6}
Let us assume that $A$ is finite-dimensional.
\begin{enumerate}
\item Koszul dual of the nonsymmetric operad $\as_\Phi$ is isomorphic to $\as_{\Phi^*}'$.
\item Koszul dual of the nonsymmetric operad $\as_\Phi'$ is isomorphic to $\as_{\Phi^*}$.
\item Koszul dual of the  operad $\sym \as_\Phi$ is isomorphic to $\sym \as_{\Phi^*}$.
\end{enumerate}
\end{prop}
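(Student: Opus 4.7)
The plan is to verify the statement by a direct Koszul-duality computation in the standard nonsymmetric setting, and then to transfer the result to the symmetric operads in part (3).

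First I would read off the quadratic data of $\as_\Phi$. It is generated by $E = A$ concentrated in arity $2$, and its space of relations sits inside
\[F(E)(3) = (A \otimes A)_{\circ_1} \oplus (A \otimes A)_{\circ_2}.\]
The defining identity $a \circ_2 b = \sum (a' \rightarrow b') \circ_1 (a'' \triangleright b'')$ gives
\[R_\Phi = \{(-\Phi(x),\, x) \,:\, x \in A \otimes A\},\]
so $R_\Phi$ is the graph of $-\Phi$ and in particular $\dim R_\Phi = (\dim A)^2$. An entirely parallel reading of the defining identity of $\as_{\Phi^*}'$, with the roles of $\circ_1$ and $\circ_2$ exchanged, gives
\[R_{\Phi^*}' = \{(y,\, -\Phi^*(y)) \,:\, y \in A^* \otimes A^*\} \subset (A^* \otimes A^*)_{\circ_1} \oplus (A^* \otimes A^*)_{\circ_2}.\]

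Next I would invoke the standard signed pairing between the arity-$3$ parts of the free nonsymmetric operads on $E$ and $E^*$, namely $\langle f \circ_i g,\, a \circ_j b \rangle = (-1)^{i+1} \delta_{ij}\, f(a)g(b)$, which is the convention making $\as^! \cong \as$. A single line of computation then yields
\[\langle (y,\, -\Phi^*(y)),\, (-\Phi(x),\, x) \rangle = -\langle y, \Phi(x)\rangle + \langle \Phi^*(y), x\rangle = 0\]
for all $x \in A \otimes A$ and $y \in A^* \otimes A^*$, so $R_{\Phi^*}' \subseteq R_\Phi^{\perp}$. Both subspaces have dimension $(\dim A)^2$ inside the $2(\dim A)^2$-dimensional ambient space, so they coincide, which proves (1). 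Part (2) follows either by rerunning the same orthogonality computation with the roles of $\Phi$ and $\Phi^*$ exchanged, or more economically by applying $(-)^!$ to (1) and invoking Koszul biduality together with the identification $\Phi^{**} = \Phi$ provided by the finite-dimensionality of $A$.

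For (3), I would use that symmetrisation and Koszul duality commute for binary quadratic operads: the only difference between the symmetric and nonsymmetric duals is a twist by the sign representation of $S_2$, and tensoring the regular representation $\K S_2$ with $\mathrm{sgn}_2$ merely permutes its summands, so no correction appears at the level of the underlying operad. Hence $(\sym \as_\Phi)^! \cong \sym(\as_\Phi^!) \cong \sym \as_{\Phi^*}'$ by part (1), and Remark \ref{remark5}(3) applied to the $\ell$EAS $(A^*, \Phi^*)$ identifies this with $\sym \as_{\Phi^*}$, as required. The main obstacle is purely bookkeeping: pinning down the sign convention in the pairing so that $\as$ is self-dual and then tracking it consistently when transposing the graph of $-\Phi$ into the graph of $-\Phi^*$; once that is fixed, the rest is a routine orthogonality plus dimension count.
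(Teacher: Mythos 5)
Your proposal is correct and follows essentially the same route as the paper: the same signed pairing on the arity-$3$ component of the free nonsymmetric operad, the same orthogonality computation showing the relations of $\as_{\Phi^*}'$ annihilate those of $\as_\Phi$, the same dimension count $\dim I = \dim I' = (\dim A)^2$ inside a $2(\dim A)^2$-dimensional space, and the same passage to parts (2) and (3) by duality and symmetrisation combined with Remark \ref{remark5}. Your "graph of $-\Phi$" packaging is a clean way to organize the computation, and your sign bookkeeping is in fact more careful than the paper's.
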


\begin{proof}
\begin{enumerate}
\item We identify $\as_\Phi(2)^*=A$ and $A^*$. This identification induces a pairing between the free nonsymmetric
operad $\bfF_A$ generated by $A$ and the free nonsymmetric operad $\bfF_{A^*}$ generated by $A^*$.
In particular, if $a,b\in A$, $f,g\in A^*$,
\begin{align*}
\langle f\circ_1 g,a\circ_1 b\rangle&=f(a)g(b),&
\langle f\circ_2 g,a\circ_2 b\rangle&=-f(a)g(b),\\
\langle f\circ_1 g,a\circ_2 b\rangle&=0,&
\langle f\circ_2 g,a\circ_1 b\rangle&=0.
\end{align*}
We denote by $I$ the space of relations of $\as_\Phi(3)$: this is the subspace of $\bfF_A$ generated by the elements
\[\sum a'\rightarrow b'\circ_1 a''\triangleright b''-a\circ_2 b,\]
with $a,b\in A$. Note that $\as_\Phi^!$ is the quotient of $\bfF_{A^*}$ by the operadic ideal generated by $I^\perp$.
We also denote by $I'$ the space of relations of $\as_{\Phi^*}(3)$: this is the subspace of $\bfF_{A^*}$ generated by the elements
\[\sum f'\rightarrow g'\circ_2 f''\triangleright g''-f\circ_1 g,\]
with $f,g\in A^*$. Let $a,b\in A$ and $f,g\in A^*$. 
\begin{align*}
&\langle \sum f'\rightarrow g'\circ_2 f''\triangleright g''-f\circ_1 g,\sum a'\rightarrow b'\circ_1 a''\triangleright b''-a\circ_2 b\rangle\\
&=-\Phi^*(f\otimes g)(a\otimes b)-(f\otimes g)(\Phi(a\otimes b))\\
&=0,
\end{align*}
so $I'\subseteq I^\perp$. Moreover:
\begin{align*}
\dim(\bfF_A(3))&=2\dim(A)^2, &\dim(I)&=\dim(I')=\dim(A)^2,
\end{align*}
so $\dim(I^\perp)=2\dim(A)^2-\dim(A)^2=\dim(A)^2=\dim(I')$ and finally $I^\perp=I'$.
\item By duality.
\item By symmetrisation, $(\sym\as_\Phi)^!=\sym\as_{\Phi^*}'$, which  is isomorphic to $\sym\as_{\Phi^*}$, 
see Remark \ref{remark5}. \qedhere
\end{enumerate}
\end{proof}

\begin{example}
Let $\Omega$ be a finite EAS. Koszul dual of the operad $\as_A$ of $\Omega$-associative algebra
is generated by the products $\star_\alpha$, with $\alpha\in \Omega$, and the relations
\begin{align*}
&\forall \alpha,\beta \in \Omega,&
\left(\sum_{\substack{(\alpha',\beta')\in \Omega^2,\\ \phi(\alpha',\beta')=(\alpha,\beta)}}
\star_{\alpha'}\circ (I,\star_{\beta'})\right)=\star_\alpha(\star_\beta,I).
\end{align*}
\end{example}

\begin{theo}\label{theo3.7}
If $A$ is finite-dimensional, the nonsymmetric operads $\as_{\Phi}$ and $\as'_{\Phi}$
as well as the operad $\sym\as_\Phi$ are Koszul.
\end{theo}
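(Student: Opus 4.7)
The plan is to adapt the rewriting method already used in Proposition~\ref{prop1.3} for $\opas$. Fix a basis of $A$ and orient the defining relations of $\as_\Phi$ as
\[
a\circ_2 b \;\longrightarrow\; \sum a'\rightarrow b' \circ_1 a''\triangleright b'',
\]
one rule for each ordered pair $(a,b)$ of basis elements. Normal forms under this rewriting are left combs whose vertices are labelled by elements of $A$; the space of such left combs in arity $n$ is canonically $A^{\otimes(n-1)}$, which is consistent with Proposition~\ref{prop3.5}. Termination is immediate, since each application of the rule strictly decreases the number of position-$2$ edges in the tree, a nonnegative quantity.

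The crux of the proof will be confluence. Among planar binary trees with three internal vertices, the unique one containing two distinct overlapping occurrences of the pattern ``$\bullet\circ_2\bullet$'' is the right comb $a\circ_2(b\circ_2 c)$, which is therefore the only critical monomial to examine. Reducing the outer pair $(a,b)$ first and then the new $\circ_2$-edge created inside the resulting sum, one obtains after operadic reassociation a left comb in $A^{\otimes 3}$ with successive labels
\[
(a'\rightarrow b')'\rightarrow c', \qquad (a'\rightarrow b')''\triangleright c'', \qquad a''\triangleright b''.
\]
Reducing the inner pair $(b,c)$ first and finishing yields instead the left comb with labels
\[
a'\rightarrow(b'\rightarrow c')', \qquad (a''\triangleright(b'\rightarrow c')'')'\rightarrow(b''\triangleright c'')', \qquad (a''\triangleright(b'\rightarrow c')'')''\triangleright(b''\triangleright c'')''.
\]
The equality of these two elements is exactly the Sweedler rewriting of the $\ell$EAS axiom (\ref{eq29}), which holds by assumption. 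Hence the critical diagram is confluent, the oriented relations constitute a Gr\"obner basis of the operadic ideal they generate, and by the rewriting criterion of \cite{LodayVallette}, $\as_\Phi$ is Koszul.

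For $\as'_\Phi$, I will either run the analogous rewriting with the opposite orientation $a\circ_1 b \to \sum a'\rightarrow b'\circ_2 a''\triangleright b''$ (whose unique critical monomial is now the left comb $a\circ_1(b\circ_1 c)$ and whose confluence is once again the $\ell$EAS axiom for $(A,\Phi)$); or observe that $(A^*,\Phi^*)$ is again an $\ell$EAS, since (\ref{eq29}) is self-dual under transposition, apply the previous paragraph to see that $\as_{\Phi^*}$ is Koszul, and invoke Proposition~\ref{prop3.6} together with the fact that Koszul duality preserves Koszulness in order to deduce that the Koszul dual $\as_{\Phi^*}^! \cong \as'_\Phi$ is Koszul. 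Finally, Koszulness of the symmetric operad $\sym\as_\Phi$ is equivalent to Koszulness of the nonsymmetric $\as_\Phi$ by the standard symmetrisation principle \cite{LodayVallette}, so it follows from the case already settled.

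The principal obstacle is purely combinatorial: in the critical-pair diagram one has to carry three nested applications of $\Phi$ on each side and match the labels of the resulting left combs position by position. Conceptually, however, the diamond closes exactly by the defining axiom of an $\ell$EAS, so once Lemma~\ref{lem2.6} and the Sweedler form of (\ref{eq29}) are available, no further input is needed.
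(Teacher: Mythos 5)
Your proposal is essentially the paper's own proof: the same orientation of the relations ($a\circ_2 b\to\sum a'\rightarrow b'\circ_1 a''\triangleright b''$), the same unique critical monomial $a\circ_2(b\circ_2 c)$, and the same observation that the two reductions of the critical pair land on left combs whose equality is precisely the Sweedler form of the $\ell$EAS axiom (\ref{eq29}). You are in fact more explicit than the paper about $\as'_\Phi$ (opposite orientation, or Koszul duality via Proposition \ref{prop3.6}) and about deducing the symmetric case by symmetrisation, both of which the paper leaves implicit.

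One step as written would fail: the termination measure. The number of internal edges in position $2$ does \emph{not} strictly decrease under the rule. If the pattern $a\circ_2 b$ sits inside a larger tree with an internal vertex grafted at input $1$ of $b$ (the middle leaf of the pattern), then after rewriting that vertex hangs at input $2$ of the new lower vertex $a''\triangleright b''$; one position-$2$ edge is destroyed at the root of the pattern but another is created below it, so the count is unchanged. A measure that does work is $\sum_{v}r(v)$, where $r(v)$ is the number of leaves of the subtree grafted at input $2$ of the internal vertex $v$: the rule sends $|T_2|+2|T_3|$ to $|T_2|+|T_3|$ locally and leaves all other contributions unchanged, so it drops by at least $1$. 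Alternatively, you can simply rely on the path-lexicographic order built into the rewriting method of \cite{LodayVallette}, for which the right-comb pattern is automatically the leading term; either fix is routine and the rest of your argument stands.
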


\begin{proof} We shall use the rewriting method of \cite{Dotsenko,LodayVallette}. 
We shall write elements of the free nonsymmetric operad generated by $\as_A(2)$ as planar trees which vertices
are decorated by elements of $A$. The rewriting rules are:
\[\bdtroisdeux(a,b)\longrightarrow \sum\bdtroisun(a'' \rightarrow b'',a'' \triangleright b'')\]
for any $a,b \in A$. There is only one family of critical monomials, which are the trees
\[\bdquatrequatre(a,b,c)\]
with $a,b,c \in A$.  Koszularity of $\as_A$ comes from the confluence of the following diagram:
\begin{align}
\label{EQ10}
\xymatrix{&T_1\ar[rd] \ar[ld]&\\
T_2\ar[ddr]&&T_3 \ar[d]\\
&&T_4\ar[ld]\\
&T_5}
\end{align}
with:
\begin{align*}
T_1&=\bdquatrequatre(a,b,c),\\
T_2&=\sum\bdquatrecinq(a' \rightarrow b',a'' \triangleright b'',c),\\
T_3&=\sum\bdquatretrois(a,b' \rightarrow c',b'' \triangleright c''),\\
T_4&=\sum\sum\bdquatredeux(a' \rightarrow (b' \rightarrow c')',a'' \triangleright (b' \rightarrow c')'',
b''\triangleright c'')\\
T_5&=\sum\sum\bdquatreun((a' \rightarrow b')'\rightarrow c',(a' \rightarrow b')''\triangleright c'', 
a'' \triangleright b'')\\
&=\sum\sum\sum\bdquatreun(a' \rightarrow (b'\rightarrow c')', (a'' \triangleright (b' \rightarrow c')'')'
\rightarrow (b''\triangleright c'')', (a'' \triangleright (b' \rightarrow c')'')''
\triangleright (b''\triangleright c'')'').
\end{align*}
The equality between the two expressions of  $T_5$ is equivalent to (\ref{eq29}).
\end{proof}

Here is another application of Diagram (\ref{EQ10}):

\begin{prop}
Let $\bfP$ be a nonsymmetric set operad such that for any $n\geqslant 1$, 
the following map is a linear isomorphism:
\[\iota_n:\left\{\begin{array}{rcl}
\bfP(2)^{\otimes (n-1)}&\longrightarrow&\bfP(n)\\
p_1\otimes \ldots \otimes p_{n-1}&\longrightarrow&p_1\circ_1(p_2\circ_1(\ldots \circ_1(p_{n-2}\circ_1p_{n-1})\ldots)).
\end{array}\right.\]
Then there exists an $\ell$EAS $(A,\Phi)$ such that $\bfP$ is isomorphic to $\as_\Phi$. 
\end{prop}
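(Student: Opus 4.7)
The plan is to read $\Phi$ off from the structure of $\bfP(3)$, verify (\ref{eq29}) by computing a single element of $\bfP(4)$ along two operadic paths, and then produce an isomorphism with $\as_\Phi$ via its generator/relation presentation.

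First, set $A=\bfP(2)$ and define $\Phi:A\otimes A\longrightarrow A\otimes A$ by $\Phi:=\iota_3^{-1}\circ {\circ_2}$, where ${\circ_2}:A\otimes A\longrightarrow \bfP(3)$ is the operadic composition at the second slot. Concretely, $\Phi(a\otimes b)$ is the unique element $\sum a'\otimes a''$ of $A^{\otimes 2}$ satisfying $a\circ_2 b=\sum a'\circ_1 a''$ in $\bfP(3)$; existence and uniqueness are guaranteed by the hypothesis that $\iota_3$ is a linear isomorphism.

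Second, I check that $(A,\Phi)$ is an $\ell$EAS. For $a,b,c\in A$, the sequential operad axiom gives the identity $a\circ_2(b\circ_2 c)=(a\circ_2 b)\circ_3 c$ in $\bfP(4)$, and I compute $\iota_4^{-1}$ of each side. Expanding $a\circ_2 b=\sum a'\circ_1 a''$ via $\Phi$, then invoking the parallel operad axiom to rewrite $(a'\circ_1 a'')\circ_3 c=(a'\circ_2 c)\circ_1 a''$, then expanding $a'\circ_2 c$ via $\Phi$, and finally re-associating the iterated $\circ_1$ by the sequential axiom, one obtains $(\Phi\otimes \id)\circ(\id \otimes \tau)\circ (\Phi\otimes \id)(a\otimes b\otimes c)$. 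Expanding instead $b\circ_2 c=\sum b'\circ_1 b''$, then using the sequential axiom twice (first $a\circ_2(b'\circ_1 b'')=(a\circ_2 b')\circ_2 b''$, then $(a^{(1)}\circ_1 a^{(2)})\circ_2 b''=a^{(1)}\circ_1(a^{(2)}\circ_2 b'')$), and expanding once more via $\Phi$, one obtains $(\id \otimes \Phi)\circ (\Phi\otimes \id)\circ (\id \otimes \Phi)(a\otimes b\otimes c)$. Injectivity of $\iota_4$ forces these two elements of $A^{\otimes 3}$ to agree, which is exactly (\ref{eq29}).

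Third, I produce the isomorphism. By the presentation of $\as_\Phi$ recalled just before Proposition \ref{prop3.5}, there exists a unique operad morphism $\theta:\as_\Phi\longrightarrow \bfP$ restricting to the identity on $A=\as_\Phi(2)=\bfP(2)$, because the defining relations $a\circ_2 b=\sum a'\rightarrow b'\circ_1 a''\triangleright b''$ hold in $\bfP$ by the very construction of $\Phi$. Proposition \ref{prop3.5} tells us that the analogous map $\iota_n^{\as_\Phi}:A^{\otimes(n-1)}\longrightarrow \as_\Phi(n)$ built from iterated $\circ_1$ on generators is a linear isomorphism; since $\theta$ fixes $A$ and preserves $\circ_1$, one has $\theta\circ \iota_n^{\as_\Phi}=\iota_n^{\bfP}$, and as both sides are linear isomorphisms, so is $\theta$ in each arity.

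The main obstacle is the bookkeeping in the second step: one must invoke the correct operad axiom (sequential vs.\ parallel) at each stage and track which tensor slot each copy of $\Phi$ acts upon, so that both sides of (\ref{eq29}) are recovered exactly rather than up to some unintended flip.
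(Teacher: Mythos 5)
Your proposal is correct and follows essentially the same route as the paper: define $\Phi=\iota_3^{-1}\circ\,\circ_2$ on $A=\bfP(2)$, derive (\ref{eq29}) by evaluating $a\circ_2(b\circ_2 c)$ in $\bfP(4)$ along the two rewriting paths and invoking injectivity of $\iota_4$, then transport the presentation of $\as_\Phi$ to $\bfP$. Your third step merely makes explicit (via the comparison $\theta\circ\iota_n^{\as_\Phi}=\iota_n^{\bfP}$) the bijectivity argument that the paper leaves implicit.
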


\begin{proof}
We put $A=\bfP(2)$ as a vector space. As $\iota_3$ is bijective, for any $a\otimes b\in A\otimes A$,
there exists a unique $\Phi(a\otimes b)=\sum a'\rightarrow b'\otimes a''\triangleright b'' \in A\otimes A$ such that
\[a \circ_2 b=\sum (a' \rightarrow b')\circ_1 (a'' \triangleright b''),\]
or, equivalently:
\[\bdtroisdeux(a,b)=\sum \bdtroisun(a' \rightarrow b',a'' \triangleright b'').\]
For any $a,b,c \in A$, let us compute $a \circ_2 (b\circ_2 c)$ into two different ways.
This element is the tree $T_1$ of (\ref{EQ10}), and, following the two paths of this diagram, we obtain
that in $\bfP(3)$:
\begin{align*}
&\sum \sum (a' \rightarrow b')'\rightarrow c'\circ_1 ((a' \rightarrow b')''\triangleright c''\circ_1
(a'' \triangleright b''))\\
&=\sum\sum  \sum a' \rightarrow (b'\rightarrow c')'\circ_1  (a'' \triangleright (b' \rightarrow c')'')'
\rightarrow (b''\triangleright c'')'\circ( (a'' \triangleright (b' \rightarrow c')'')''
\triangleright (b''\triangleright c'')'')). 
\end{align*}
As $\iota_4$ is an isomorphism, we obtain the axioms of $\ell$EAS for $(A,\Phi)$. 
Hence, we obtain an operad isomorphism from $\as_\Phi$ to $\bfP$, sending $*_a$ to $a$ for any $a \in A$.  \end{proof}

\subsection{Associative products}

We now look for operad morphisms from the operad of associative algebras to the operad $\sym\as_\Phi$,
where $(A,\Phi)$ is an $\ell$EAS, or equivalently to products $m\in \sym\as_\Phi(2)$ which are associative,
that is to say such that $m\circ_1 m=m\circ_2 m$.

\begin{prop}\label{propassociative}
Let $(A,\Phi)$ be an $\ell$EAS. The associative products in $\sym\as_\Phi(2)$ are the elements of the form
\begin{align*}
m&=*_a&&\mbox{ or }&m&=*_a^{op},
\end{align*}
where $a\in A$ is such that $\Phi(a\otimes a)=a\otimes a$.
The products $m\in \sym\as_\Phi(2)$ such that $m\circ_2 m=0$ are the elements of the form
\[m=*_a,\]
where $a\in A$ is such that $\Phi(a\otimes a)=0$.
\end{prop}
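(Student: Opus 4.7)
The plan is to write a general element of $\sym\as_\Phi(2)=A\oplus A^{op}$ as $m=*_a+*_b^{op}$ with $a,b\in A$, and to test each of the two conditions by evaluating the corresponding ternary identities in the free $\Phi$-associative algebra $T_A(V)$ of Theorem \ref{theo3.2} on three linearly independent vectors $x,y,z$ of a test space $V$. The six permutations of $\{x,y,z\}$ label six linearly independent slots inside $A^{\otimes 2}\otimes V^{\otimes 3}$, so matching coefficients word by word is equivalent to the operadic identities in $\sym\as_\Phi(3)$.

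The two normal-form computations that drive everything follow directly from Theorem \ref{theo3.2}: for any $c,d\in A$,
\begin{align*}
(x *_c y) *_d z &= c\otimes d\otimes x\otimes y\otimes z,\\
x *_c (y *_d z) &= (\tau\circ\Phi)(c\otimes d)\otimes x\otimes y\otimes z.
\end{align*}
Expanding $m\circ_1 m$ and $m\circ_2 m$ by bilinearity, each becomes a sum of four terms. Their $V^{\otimes 3}$-supports are respectively $\{xyz,\,yxz,\,zxy,\,zyx\}$ and $\{xyz,\,xzy,\,yzx,\,zyx\}$, with coefficients (in that order) $a\otimes a,\,b\otimes a,\,\tau\Phi(b\otimes a),\,\tau\Phi(b\otimes b)$ and $\tau\Phi(a\otimes a),\,\tau\Phi(a\otimes b),\,a\otimes b,\,b\otimes b$.

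To prove associativity $m\circ_1 m = m\circ_2 m$, the four crossed words $yxz,\,zxy,\,xzy,\,yzx$ lie on only one side and produce the conditions $b\otimes a=0$, $\Phi(b\otimes a)=0$, $\Phi(a\otimes b)=0$ and $a\otimes b=0$. Vanishing of a pure tensor in $A\otimes A$ forces $a=0$ or $b=0$; the surviving coefficient on $xyz$ (if $b=0$) or on $zyx$ (if $a=0$) then reduces to $\Phi(c\otimes c)=c\otimes c$ for the nonzero element $c\in\{a,b\}$ (using that $\tau$ fixes $c\otimes c$), yielding the first statement. For the second statement, setting $m\circ_2 m = 0$ and reading the $zyx$-coefficient gives $b\otimes b=0$ hence $b=0$, so $m=*_a$; the equation collapses to the $xyz$-coefficient, which reads $\Phi(a\otimes a)=0$.

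The main obstacle is purely clerical: one must carefully track the transposition $\tau$ introduced when right-nested products are rewritten into the left-nested normal form of $T_A(V)$, and correctly pair each summand of $m\circ_i m$ with its associated permutation of $(x,y,z)$. Once organised, the six permutations decouple the associativity equation into six independent identities in $A\otimes A$, each either forcing a pure tensor to vanish or imposing a fixed-point condition on $\Phi$.
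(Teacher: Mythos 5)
Your proposal is correct and follows essentially the same route as the paper: the paper likewise writes $m=*_a+*_b^{op}$, evaluates $m\circ(m\otimes\id)$ and $m\circ(\id\otimes m)$ on $x\otimes y\otimes z$ in the free $\Phi$-associative algebra on three generators, and identifies coefficients of the words $xyz$, $yzx$, $zyx$, etc., to force $a\otimes b=0$ (hence $a=0$ or $b=0$) and the fixed-point (resp.\ vanishing) condition on $\Phi(a\otimes a)$. Your bookkeeping of which permutations occur on which side, and the resulting coefficients, matches the paper's computation exactly (including correcting its typo $zxy$ for $zyx$ in the last term of $m\circ_1 m$).
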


\begin{proof}
Let $m=*_a+*_b^{op}\in \sym\as_\Phi(2)$. Let $V=T_A(\mathrm{Vect}(x,y,z))$ be the free $\Phi$-associative algebra 
generated by three elements $x,y,z$. In $V$:
\begin{align*}
m\circ (\id \otimes m)(x\otimes y\otimes z)&=m(x\otimes (ayz+bzy))\\
&=\tau\circ \Phi(a\otimes a)xyz+ab yzx+\tau\circ \Phi(a\otimes b)xzy+bbzyx,\\
m\circ(m\otimes \id)(x\otimes y\otimes z)&=m((axy+byx)\otimes z)\\
&=aaxyz+bayxz+\tau\circ \Phi(b\otimes a)zxy+\tau\circ \Phi(b\otimes b)zxy.
\end{align*}
\begin{enumerate}
\item If $m$ is associative, identifying the terms in $yzx$, we find $a\otimes b=0$, so $a=0$ or $b=0$. 
Identifying the terms in $xyz$, we find that $\tau\circ \Phi(a\times a)=a\otimes a$. 
Similarly, the identification of the terms in $zyx$ gives that $\Phi(b\otimes b)=b\otimes b$.
Conversely, if $\Phi(a)=a\otimes a$, then
\begin{align*}
a\circ_1 a&=aa,& a\circ_2a&=\Phi(a\otimes a)=aa,
\end{align*}
so $*_a$ is associative, and its opposite $*_a^{op}$ is associative too.\\

\item If $m\circ_2 m=0$, identifying the term in $zyx$, we find that $b=0$. Identifying the term in $xyz$,
we find that $\tau\circ \Phi(a\times a)=0$. Conversely, if $\Phi(a\otimes a)=0$, then
\begin{align*}
a\circ_2a&=\Phi(a\otimes a)=0.\qedhere
\end{align*}\end{enumerate} \end{proof}

\begin{remark}
If $(A,\Phi)$ is the linearization of an EAS $(\Omega,\rightarrow,\triangleright)$, we obtain that:
\begin{itemize}
\item The associative elements $m\in \sym\as_\Phi(2)$ are the elements of 
the form
\begin{align*}
m&=\sum_{\alpha\in \Omega} \lambda_\alpha *_\alpha&\mbox{or}&&
m&=\sum_{\alpha\in \Omega} \lambda_\alpha *_\alpha^{op},
\end{align*}
such that
\begin{align}
\label{EQ11} &\forall (\alpha,\beta)\in \Omega^2,&
\lambda_\alpha \lambda_\beta=\sum_{\substack{(\gamma,\delta)\in \Omega^2,\\ \phi(\gamma,\delta)
=(\alpha,\beta)}}\lambda_\gamma \lambda_\delta.
\end{align}
\item The elements $m\in \sym\as_\Phi(2)$ such that $m\circ_2 m=0$ are the elements of the form
\begin{align*}
m&=\sum_{\alpha\in \Omega} \lambda_\alpha *_\alpha,
\end{align*}
such that
\begin{align}
\label{EQ12} &\forall (\alpha,\beta)\in \Omega^2,&
\sum_{\substack{(\gamma,\delta)\in \Omega^2,\\ \phi(\gamma,\delta)=(\alpha,\beta)}}\lambda_\gamma \lambda_\delta=0.
\end{align}\end{itemize}\end{remark}

\begin{example}
Working with $\eas(\Omega)$, then $\phi(\alpha,\beta)=(\beta,\alpha)$ and Condition (\ref{EQ11}) is empty:
any linear combination of $*_\alpha$ is associative, as well as their opposite. 
\end{example}

\begin{example} Let us give the associative products for EAS of cardinality two.
We only mention the spans of $*_\alpha$, their opposite should be added. Here, $\lambda,\mu$ are scalars.
\[\begin{array}{|c|c|c|}
\hline\mbox{Cases}&\mbox{Associative products}&m\circ_2 m=0\\
\hline \hline \mathbf{A1}&\lambda *_a&\lambda (*_a-*_b)\\
\hline \mathbf{A2}&\lambda *_a&\lambda (*_a-*_b)\\
\hline \mathbf{C1}&\lambda *_a&0\\
\hline \mathbf{C3}&\lambda *_a,\:\lambda *_b&0\\
\hline \mathbf{C5}&\lambda *_b&0\\
\hline \mathbf{C6}&\lambda *_a&0\\
\hline \mathbf{E1'}-\mathbf{E2'}&\lambda *_a&\lambda (*_a-*_b)\\
\hline \mathbf{E3'}&\lambda *_a\:\lambda *_b&\lambda (*_a-*_b)\\
\hline \mathbf{F1}&\lambda *_a&\lambda (*_a-*_b)\\
\hline \mathbf{F3}&\lambda *_a+\mu *_b&0\\
\hline \mathbf{F4}&\lambda (*_a+*_b),\: \lambda *_a&0\\
\hline \mathbf{H1}&\lambda *_a&0\\
\hline \mathbf{H2}&\lambda (*_a+*_b),\: \lambda *_a&0\\
\hline \end{array}\]
\end{example}

\begin{cor}
Let $(\Omega,\star)$ be a group. The nonzero associative products in $\sym\as_{\eas(\Omega,\star)}$
or in  $\sym\as_{\eas'(\Omega,\star)}$ are the elements of one of the form
\begin{align*}
&\lambda \sum_{\alpha \in H} *_\alpha&&\mbox{or}&\lambda \sum_{\alpha \in H} *_\alpha^{op},
\end{align*}
where $\lambda$ is a nonzero scalar and $H$ is a subgroup of $\Omega$.
\end{cor}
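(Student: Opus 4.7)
The plan is to apply the characterization of associative products provided by the Remark after Proposition \ref{propassociative}, which reduces the problem to analyzing equation (\ref{EQ11}) for the specific $\phi$'s of $\eas(\Omega,\star)$ and $\eas'(\Omega,\star)$. Since both EAS are nondegenerate (the map $\phi$ is bijective in each case), each $(\alpha,\beta)\in\Omega^2$ has a unique preimage under $\phi$, so (\ref{EQ11}) becomes a single equation rather than a sum.

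First I would compute these preimages explicitly. For $\eas(\Omega,\star)$, where $\phi(\gamma,\delta)=(\gamma\star\delta,\gamma)$, the unique preimage of $(\alpha,\beta)$ is $(\beta,\beta^{\star-1}\star\alpha)$, so (\ref{EQ11}) reads
\[\lambda_\alpha \lambda_\beta = \lambda_\beta\,\lambda_{\beta^{\star-1}\star\alpha}\quad\text{for all }\alpha,\beta\in\Omega.\]
For $\eas'(\Omega,\star)$, where $\phi(\gamma,\delta)=(\delta,\gamma\star\delta^{\star-1})$, the unique preimage of $(\alpha,\beta)$ is $(\beta\star\alpha,\alpha)$, so (\ref{EQ11}) reads
\[\lambda_\alpha\lambda_\beta = \lambda_{\beta\star\alpha}\,\lambda_\alpha\quad\text{for all }\alpha,\beta\in\Omega.\]

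Next, in each case I would set $H=\{\alpha\in\Omega:\lambda_\alpha\neq 0\}$ and show that $H$ is a subgroup on which $\lambda$ is constant. Concentrating on $\eas(\Omega,\star)$, the equation says that for every $\beta\in H$ the function $\alpha\mapsto \lambda_\alpha$ is invariant under $\alpha\mapsto \beta^{\star-1}\star\alpha$. Specializing $\alpha=\beta$ yields $\lambda_\beta=\lambda_e$, so $\lambda$ takes a common nonzero value $\lambda$ on $H$ and $e\in H$; specializing $\alpha=e$ yields $\lambda_{\beta^{\star-1}}=\lambda_e\neq 0$, so $H$ is closed under inversion; and invariance of $H$ under left multiplication by $\beta^{\star-1}$ for every $\beta\in H$ gives closure under the product. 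The argument for $\eas'(\Omega,\star)$ is entirely symmetric, using right multiplication.

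Finally I would check the converse: for any subgroup $H\le \Omega$ and any scalar $\lambda$, the element $\lambda\sum_{\alpha\in H}*_\alpha$ satisfies (\ref{EQ11}), as both sides equal $\lambda^2\mathbf{1}_{\alpha,\beta\in H}$ (this uses exactly $\beta^{\star-1}\star\alpha\in H\Leftrightarrow \alpha,\beta\in H$ in the first case, and $\beta\star\alpha\in H\Leftrightarrow \alpha,\beta\in H$ in the second). The opposite products $\lambda\sum_{\alpha\in H}*_\alpha^{op}$ arise in the same way from the second family listed in Proposition \ref{propassociative}. I do not expect a serious obstacle; the only subtle point is making sure to extract $\lambda_e\neq 0$ from the diagonal specialization $\alpha=\beta$ (resp.\ $\beta=\alpha^{\star-1}$) before invoking group-theoretic closure, which is what turns the stabilizer condition into genuine subgroup structure.
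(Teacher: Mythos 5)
Your proposal is correct and follows essentially the same route as the paper: reduce to Equation (\ref{EQ11}) via the remark after Proposition \ref{propassociative}, use bijectivity of $\phi$ to turn the sum into a single identity, and deduce that the support $H$ of $(\lambda_\alpha)$ is a subgroup on which $\lambda$ is constant (the paper reaches the same conclusion with the specializations $\beta=e_\Omega$ and $\beta=\alpha^{\star-1}$ instead of your $\alpha=\beta$ and $\alpha=e$). Two harmless imprecisions: for $\eas'(\Omega,\star)$ the nonvanishing of $\lambda_{e_\Omega}$ should be extracted from $\beta=e_\Omega$ (the specialization $\beta=\alpha^{\star-1}$ only relates two a priori unknown values), and in the converse check the equivalence $\beta^{\star-1}\star\alpha\in H\Leftrightarrow\alpha\in H$ holds only conditionally on $\beta\in H$, which is all you need because the factor $\lambda_\beta$ is present; your explicit verification of the converse is a point the paper leaves implicit.
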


\begin{proof}
\textit{Case of $\eas(\Omega,\star)$}. Then (\ref{EQ11}) becomes:
\begin{align*}
&\forall (\alpha,\beta)\in \Omega^2,&\lambda_{\alpha\star\beta} \lambda_\alpha=\lambda_\alpha \lambda_\beta.
\end{align*}
Let $H=\{\alpha \in \Omega, \lambda_\alpha\neq 0\}$. We assume that $H$ is nonempty. If $\alpha \in H$,
for any $\beta \in H$, $\lambda_{\alpha \star \beta}=\lambda_\beta$. In particular:
\begin{itemize}
\item If $\beta \in H$, then $\alpha \star \beta \in H$.
\item If $\beta=e_\Omega$, then $\lambda_\alpha=\lambda_{e_\Omega}\neq 0$: $e_\Omega \in H$.
\item If $\beta=\alpha^{\star -1}$, $\lambda_{e_\Omega}=\lambda_{\alpha^{\star-1}}\neq 0$: $\alpha^{\star-1}\in H$.
\end{itemize}
Therefore, $H$ is a subgroup of $\Omega$. Let $\alpha,\beta \in I$, then $\alpha'=\alpha\star \beta^{-1}\in H$.
From (\ref{EQ11}), we deduce that $\lambda_{\alpha'\star \beta}=\lambda_\alpha=\lambda_\beta$. Let $\lambda$
be the common value of $\lambda_\alpha$ for any $\alpha \in H$; the result is the immediate.\\

\textit{Case of $\eas'(\Omega,\star)$}. Then (\ref{EQ11}) becomes:
\begin{align*}
&\forall (\alpha,\beta)\in \Omega^2,&\lambda_{\alpha\star\beta^{\star-1}} \lambda_\alpha=\lambda_\alpha \lambda_\beta.
\end{align*}
The proof is similar to the case of $\eas(\Omega,\star)$. \end{proof}

\subsection{Operadic morphisms between $\opas$ and $\as_\Omega$}

\begin{prop}\label{prop3.14}
Let $(\Omega,\rightarrow,\triangleright)$ and let $(A,\Phi)$ be its linearization, that is to say $A=\K\Omega$ and:
\[\Phi:\left\{\begin{array}{rcl}
A\otimes A&\longrightarrow&A\otimes A\\
\alpha\otimes \beta&\longrightarrow&\alpha \rightarrow \beta \otimes \alpha \triangleright \beta,
\end{array}\right.\]
where $\alpha,\beta\in \Omega$.
The following defines an operad morphism:
\begin{align*}
\Theta_\Omega&:\left\{\begin{array}{rcl}
\opas&\longrightarrow&\as_\Phi\\
*_{\alpha,\beta}&\longrightarrow&*_{\alpha \triangleright \beta}.
\end{array}\right.
\end{align*}
\end{prop}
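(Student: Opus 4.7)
The plan is to verify that the defining relations of $\opas$ are sent to identities in $\as_\Phi$. Since $\opas$ is presented by the generators $*_{\alpha,\beta}$ and the relations
\[*_{\alpha \rightarrow \beta,\gamma}\circ_1 *_{\alpha,\beta}=*_{\alpha,\beta \rightarrow \gamma}\circ_2 *_{\beta,\gamma},\]
it suffices to check that the corresponding equality holds in $\as_\Phi$ after applying $\Theta_\Omega$ to each generator.

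First I would write out both sides under $\Theta_\Omega$. The left-hand side becomes $*_{(\alpha\rightarrow\beta)\triangleright\gamma}\circ_1 *_{\alpha\triangleright\beta}$, and the right-hand side becomes $*_{\alpha\triangleright(\beta\rightarrow\gamma)}\circ_2 *_{\beta\triangleright\gamma}$. The key observation is that the right-hand side is a $\circ_2$-composition in $\as_\Phi$, and we can rewrite any such composition using the defining relation
\[a\circ_2 b=\sum a'\rightarrow b'\circ_1 a''\triangleright b''\]
of $\as_\Phi$. Applied to $a=\alpha\triangleright(\beta\rightarrow\gamma)$ and $b=\beta\triangleright\gamma$, and recalling that under the linearization $\Phi(x\otimes y)=(x\rightarrow y)\otimes(x\triangleright y)$, this yields
\[*_{\alpha\triangleright(\beta\rightarrow\gamma)}\circ_2 *_{\beta\triangleright\gamma}
=*_{(\alpha\triangleright(\beta\rightarrow\gamma))\rightarrow(\beta\triangleright\gamma)}\circ_1 *_{(\alpha\triangleright(\beta\rightarrow\gamma))\triangleright(\beta\triangleright\gamma)}.\]

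The final step is to invoke the EAS axioms for $(\Omega,\rightarrow,\triangleright)$: axiom (\ref{EQ6}) identifies the first subscript with $(\alpha\rightarrow\beta)\triangleright\gamma$, and axiom (\ref{EQ7}) identifies the second subscript with $\alpha\triangleright\beta$. The right-hand side therefore becomes $*_{(\alpha\rightarrow\beta)\triangleright\gamma}\circ_1 *_{\alpha\triangleright\beta}$, which is precisely the image of the left-hand side of the $\opas$ relation. Hence $\Theta_\Omega$ descends to a well-defined operad morphism.

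I do not expect any serious obstacle: the statement is essentially a bookkeeping exercise that explains \emph{why} axioms (\ref{EQ6}) and (\ref{EQ7}) of an EAS were formulated the way they were. The only thing to be careful about is the direction in which one applies the $\as_\Phi$ relation, and the order of the arguments in $\circ_1$ versus $\circ_2$; tracking these consistently is what makes the two EAS axioms appear in exactly the right roles.
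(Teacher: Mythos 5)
Your proof is correct and follows essentially the same route as the paper: both reduce the claim to checking the single quadratic relation of $\opas$, rewrite the $\circ_2$-composition via the defining relation of $\as_\Phi$ (the paper does this at the level of an arbitrary $\Phi$-associative algebra, you do it directly on the operad presentation), and then invoke axioms (\ref{EQ6}) and (\ref{EQ7}) to match the two subscripts. The only difference is cosmetic, and your version actually states the roles of the two EAS axioms more cleanly than the paper's computation does.
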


\begin{proof}
Let us consider an $\Omega$-associative algebra $(A,(*_\alpha)_{\alpha \in \Omega})$.
For any $(\alpha,\beta)\in \Omega^2$, we put $*_{\alpha,\beta}=*_{\alpha \triangleright \beta}$.
Then, for any $x,y,z\in A$:
\begin{align*}
x*_{\alpha,\beta \rightarrow \gamma}(y *_{\beta,\gamma} z)
&=x*_{\alpha \triangleright(\beta \rightarrow \gamma)} (y *_{\beta \rightarrow \gamma} z)\\
&=(x*_{(\alpha \triangleright(\beta \rightarrow \gamma))\triangleright (\beta \rightarrow \gamma)} y)
*_{(\alpha \triangleright(\beta \rightarrow \gamma))\rightarrow (\beta \rightarrow \gamma)} z\\
&=(x*_{(\alpha\rightarrow \beta) \triangleright\gamma} y)
*_{\alpha \triangleright\beta} z\\
&=(x*_{\alpha\rightarrow \beta,\gamma}y)*_{\alpha,\beta} z.
\end{align*} 
Hence, $(A,(*_{\alpha,\beta})_{\alpha,\beta,\in \Omega})$ is a 2-parameter $\Omega$-associative algebra,
which implies the existence of the operadic morphism $\Theta_\Omega$. 
\end{proof}

\begin{prop}\label{prop3.15}
Let $(\Omega,\rightarrow)$ be an associative semigroup with the right inverse condition. 
We consider the EAS $\Omega'=\eas(\Omega,\rightarrow)\times \eas'(\Omega,\rightarrow)$
and denote by $(A',\Phi')$ its linearization. The following defines a surjective operad morphism:
\begin{align*}
\Theta'_\Omega&:\left\{\begin{array}{rcl}
\opas&\longrightarrow&\as_{\Phi'}\\
*_{\alpha,\beta}&\longrightarrow&*_{(\alpha,\beta)}.
\end{array}\right.
\end{align*}\end{prop}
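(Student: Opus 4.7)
The approach is to define $\Theta'_\Omega$ as the unique extension to $\opas$ of the assignment $*_{\alpha,\beta}\mapsto *_{(\alpha,\beta)}$ on quadratic generators, and to check this by verifying that the defining relation of $\opas$ maps to the defining relation of $\as_{\Phi'}$ evaluated on a carefully chosen pair.

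First I would expand $\Phi'$ explicitly. Since $\Omega'$ is a product of EAS, its two operations are componentwise, and using $\triangleright=$ first projection from $\eas(\Omega,\rightarrow)$ together with $\rightarrow=$ second projection and $\alpha\triangleright\beta=\alpha\rightarrow\beta^{-1}$ from $\eas'(\Omega,\rightarrow)$, this yields
\begin{align*}
(\alpha_1,\alpha_2)\rightarrow^{\Omega'}(\beta_1,\beta_2)&=(\alpha_1\rightarrow\beta_1,\beta_2),\\
(\alpha_1,\alpha_2)\triangleright^{\Omega'}(\beta_1,\beta_2)&=(\alpha_1,\alpha_2\rightarrow\beta_2^{-1}),
\end{align*}
where $\beta_2^{-1}$ denotes a right inverse provided by the hypothesis on $\Omega$. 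In particular, $\Phi'$ produces only pure tensors (no nontrivial Sweedler sum), which simplifies the subsequent match of relations.

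The central computation will be to evaluate $\Phi'$ on $(\alpha,\beta\rightarrow\gamma)\otimes(\beta,\gamma)$: its first output component is immediately $(\alpha\rightarrow\beta,\gamma)$, while the second is $(\alpha,(\beta\rightarrow\gamma)\rightarrow\gamma^{-1})$. Using associativity of $\rightarrow$ together with the right inverse condition, the identity $(\beta\rightarrow\gamma)\rightarrow\gamma^{-1}=\beta\rightarrow(\gamma\rightarrow\gamma^{-1})=\beta$ collapses the second component, giving
$$\Phi'\bigl((\alpha,\beta\rightarrow\gamma)\otimes(\beta,\gamma)\bigr)=(\alpha\rightarrow\beta,\gamma)\otimes(\alpha,\beta).$$
I expect this telescoping to be the only genuinely delicate point of the argument, since it is where the right inverse hypothesis really enters; everything else is a direct relation chase.

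Substituting $a=(\alpha,\beta\rightarrow\gamma)$ and $b=(\beta,\gamma)$ into the defining relation $a\circ_2 b=\sum (a'\rightarrow b')\circ_1(a''\triangleright b'')$ of $\as_{\Phi'}$ then yields
$$*_{(\alpha,\beta\rightarrow\gamma)}\circ_2 *_{(\beta,\gamma)}=*_{(\alpha\rightarrow\beta,\gamma)}\circ_1 *_{(\alpha,\beta)},$$
which is precisely the image under $*_{\alpha,\beta}\mapsto *_{(\alpha,\beta)}$ of the defining relation of $\opas$. Hence the assignment on generators extends uniquely to an operad morphism $\Theta'_\Omega$. Surjectivity is then immediate: the generating family $\{*_{(\alpha,\beta)}:(\alpha,\beta)\in\Omega\times\Omega\}$ of $\as_{\Phi'}$ lies entirely in the image of $\Theta'_\Omega$ by construction.
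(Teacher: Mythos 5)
Your proposal is correct and follows essentially the same route as the paper: both arguments reduce to the single computation $\Phi'\bigl((\alpha,\beta\rightarrow\gamma)\otimes(\beta,\gamma)\bigr)=(\alpha\rightarrow\beta,\gamma)\otimes(\alpha,\beta)$, whose only nontrivial ingredient is the right inverse identity $(\beta\rightarrow\gamma)\triangleright\gamma=\beta$, exactly the step the paper flags as ``using the right inverse property''. The paper runs the verification on elements $x,y,z$ of an $\Omega'$-associative algebra rather than directly on the operadic relation, and leaves surjectivity implicit (the generators $*_{(\alpha,\beta)}$ span $\as_{\Phi'}(2)$), but these are only cosmetic differences from what you wrote.
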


\begin{proof}
The EAS structure of $\Omega'$ is given by:
\begin{align*}
&\forall (\alpha,\beta,\gamma,\delta)\in \Omega^4,&(\alpha,\beta)\rightarrow (\gamma,\delta)
&=(\alpha \rightarrow \gamma,\delta),\\
&&(\alpha,\beta)\triangleright (\gamma,\delta)&=(\alpha,\beta\triangleright \delta).
\end{align*}
Let $(A,(*_{(\alpha,\beta)})_{(\alpha,\beta) \in \Omega^2})$ be an $\Omega'$-associative algebra.
For any $(\alpha,\beta)\in \Omega'$, we put $*_{\alpha,\beta}=*_{\alpha,\beta}$. 
Then, for any $x,y,z\in A$, using the right inverse property for the second equality:
\begin{align*}
(x*_{\alpha,\beta} y)*_{\alpha\rightarrow \beta,\gamma} z
&=(x*_{(\alpha,\beta)}y)*_{(\alpha\rightarrow \beta,\gamma)}z\\
&=(x*_{(\alpha,(\beta\rightarrow \gamma) \triangleright \gamma)}y)*_{(\alpha\rightarrow \beta,\gamma)}z\\
&=(x*_{(\alpha,\beta \rightarrow \gamma) \triangleright (\beta,\gamma)} y)
*_{(\alpha,\beta \rightarrow \gamma) \rightarrow (\beta,\gamma)} z\\
&=x*_{(\alpha,\beta\rightarrow \gamma)}(y*_{(\beta,\gamma)}z)\\
&=x*_{\alpha,\beta\rightarrow \gamma}(y*_{\beta,\gamma}z).
\end{align*}
Hence, $(A,(*_{\alpha,\beta})_{\alpha,\beta \in \Omega})$ is a 2-parameter $\Omega$-associative algebra. 
This implies the existence of the morphism $\Theta'_\Omega$. \end{proof}

\begin{remark}
Except if $\omega=|\Omega|=1$, this morphism is not bijective: the dimension of $\opas(3)$ is $(2\omega-1)\omega^3$, 
whereas the dimension of $\as_{\Phi'}(3)$ is $\omega^4$. 
\end{remark}

\section{Links with other operads}

\subsection{Post-Lie and ComTriAs algebras}

Let us consider post-Lie algebras \cite{Vallette2007}, see also \cite{postLie1,postLie2,postLie3,postLie4,postLie5,postLie6,postLie7} 
for applications and developments. Recall that a post-Lie algebra is a family $(A,*,\{,\})$ where  $A$ is a vector space
 and $*$ and $\star$ are bilinear products on $A$ such that $(A,\{,\})$ is a Lie algebra and,  for any $x,y,z\in A$:
\begin{align*}
x*\{y,z\}&=(x*y)*z - x*(y*z) - (x*z)*y + x*(z*y),\\
\{x,y\}*z&=\{x*z,y\}+\{x,,y*z\}.
\end{align*}
Let us start with the Koszul dual of the operad of post-Lie algebras, namely the operad of ComTriAs algebras \cite{LodayEncyclopedia}:

\begin{defi}
A ComTriAs algebra is a family $(A,\cdot,\star)$, where $A$ is a vector space and $\cdot$ and $\star$ are bilinear products
on $A$ such that for any $x,y,z\in A$:
\begin{align*}
x\cdot y&=y\cdot x,\\
(x\cdot y)\cdot z&=x\cdot (y\cdot z),\\
(x\star y)\star z&=x\star(y\star z),\\
(x\star y)\star z&=x\star(y\cdot z),\\
(x\cdot y)\star z&=x(y\star z).
\end{align*}
Note that the products $\cdot$ and $*$ are respectively denoted by $\perp$ and $\dashv$ in \cite{LodayEncyclopedia}.\\
\end{defi}

Apart from the first one, these axioms are the ones of a particular example of generalized associative algebra:

\begin{prop}
Let $\Omega$ be the EAS $\mathbf{C3}$ (that is to say the EAS associated to the semigroup $(\Z/2\Z,\times)$.
Then any ComTriAs algebra $(A,\cdot,*)$ is an $\Omega$-associative algebra, with $*_{\overline{0}}=\star$
and $*_{\overline{1}}=\cdot$.
\end{prop}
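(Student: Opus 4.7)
The plan is to verify directly that the $\Omega$-associativity axiom
\[x*_\alpha (y*_\beta z) = (x*_{\alpha\triangleright\beta} y) *_{\alpha\rightarrow\beta} z\]
holds for every choice of $(\alpha,\beta)\in \Omega^2$, using the ComTriAs axioms. Since $\Omega = \mathbf{C3}$ is the linearization of $(\Z/2\Z,\times)$ viewed as an EAS via the second item of the examples in Section~2.1, we have $\alpha \rightarrow \beta = \alpha\beta$ (product in $\Z/2\Z$) and $\alpha \triangleright \beta = \alpha$. So the axiom to check becomes
\[x *_\alpha (y *_\beta z) = (x *_\alpha y) *_{\alpha\beta} z\]
for the four pairs $(\alpha,\beta) \in \{\overline{0},\overline{1}\}^2$, after substituting $*_{\overline{0}} = \star$ and $*_{\overline{1}} = \cdot$.

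First I would handle the two diagonal cases. For $(\alpha,\beta) = (\overline{0},\overline{0})$ the identity reduces to $x \star (y \star z) = (x\star y)\star z$, which is the associativity of $\star$ (third ComTriAs axiom). For $(\alpha,\beta) = (\overline{1},\overline{1})$ we obtain $x\cdot (y\cdot z) = (x\cdot y)\cdot z$, which is the associativity of $\cdot$ (second axiom). Next the two mixed cases: for $(\alpha,\beta) = (\overline{0},\overline{1})$ we get $x \star (y\cdot z) = (x\star y)\star z$, exactly the fourth ComTriAs axiom; and for $(\alpha,\beta) = (\overline{1},\overline{0})$ we get $x\cdot (y\star z) = (x\cdot y)\star z$, exactly the fifth ComTriAs axiom (where the unmarked juxtaposition on its right-hand side is read as $\cdot$).

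There is no genuine obstacle: the statement is a direct dictionary between the two presentations. The only minor care needed is to match the EAS tables of $\mathbf{C3}$ with the identification $a=\overline{0}$, $b=\overline{1}$, and to observe that the commutativity axiom $x\cdot y = y\cdot x$ of a ComTriAs algebra is additional and simply not used in the verification — so ComTriAs algebras form a proper subclass of $\mathbf{C3}$-associative algebras.
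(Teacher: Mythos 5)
Your verification is correct and is exactly the argument the paper intends: the paper states this proposition without a written proof, presenting it as an immediate matching of the four instances of the $\Omega$-associativity axiom for $\eas(\Z/2\Z,\times)$ (where $\alpha\rightarrow\beta=\alpha\beta$ and $\alpha\triangleright\beta=\alpha$) with the last four ComTriAs axioms, which is precisely your case-by-case check. Your added remarks — that commutativity of $\cdot$ is not used, so ComTriAs algebras form a proper subclass — are accurate and consistent with the paper's framing.
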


Consequently, we obtain an operad morphism from the operad of $\Omega$-associative algebra to the operad
of ComTriAs algebras. Using Koszul duality:

\begin{cor}
Let $(V,\Phi)$ be the $\ell$EAS dual to $\mathbf{C3}$: $V$ is two-dimensional, with basis $(e_1,e_2)$,
and the basis of $\Phi$ is the basis $(e_1\otimes e_1,e_1\otimes e_2,e_2\otimes e_1,e_2\otimes e_2)$ is
\[\begin{pmatrix}
1&0&0&0\\
1&0&0&0\\
0&1&0&0\\
0&0&0&1
\end{pmatrix}.\]
Then any opposite $\Phi$-associative algebra is a post-Lie algebra,  with for any $x,y\in A$:
\begin{align*}
\{x,y\}&=x *_2 y-y*_2 x,&x*y&=x*_1 y.
\end{align*}
\end{cor}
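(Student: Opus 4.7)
The strategy is to deduce the corollary from the previous proposition by operadic Koszul duality. That proposition amounts to a surjective operad morphism from $\as_{\Phi_\Omega}$ to the operad of ComTriAs algebras, where $\Phi_\Omega$ is the linearization of $\mathbf{C3}$, sending $*_a \mapsto \star$ and $*_b \mapsto \cdot$. I would first compute $\Phi_\Omega$ explicitly from the $\mathbf{C3}$ tables: identifying $(e_1,e_2) = (a,b)$, one finds $\Phi_\Omega(e_1\otimes e_1) = e_1\otimes e_1$, $\Phi_\Omega(e_1\otimes e_2) = e_1\otimes e_1$, $\Phi_\Omega(e_2\otimes e_1) = e_1\otimes e_2$ and $\Phi_\Omega(e_2\otimes e_2) = e_2\otimes e_2$. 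Transposing, $\Phi_\Omega^*$ has exactly the matrix $\Phi$ displayed in the statement, which locates the right $\ell$EAS.

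Next I would invoke Koszul duality. The operad of ComTriAs algebras is Koszul dual to the operad of post-Lie algebras (a classical result, cf.\ \cite{LodayEncyclopedia}), while Proposition \ref{prop3.6} gives $(\as_{\Phi_\Omega})^! \cong \as'_{\Phi_\Omega^*} = \as'_\Phi$. Dualising the morphism above therefore yields an operad morphism from the operad of post-Lie algebras to $\as'_\Phi$; equivalently, every opposite $\Phi$-associative algebra acquires a canonical post-Lie structure, and the formulas in the corollary just record where the two post-Lie generators go.

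To pin down $\{x,y\} = x*_2 y - y*_2 x$ and $x*y = x*_1 y$, I would trace the images of the post-Lie generators through the Koszul pairing. Under the dual pairing of $\as_{\Phi_\Omega}(2) = A$ with $\as'_\Phi(2) = A^*$, the basis elements $*_1,*_2$ pair against $*_a = \star$ and $*_b = \cdot$; the commutativity of $\cdot$ in ComTriAs forces its Koszul-dual generator to be antisymmetric, yielding the Lie bracket $\{x,y\} = x*_2 y - y*_2 x$, while $\star$, which carries no symmetry, dualises to the post-Lie product $x*y = x*_1 y$. As a sanity check I would unwind the four axioms of an opposite $\Phi$-associative algebra (one per column of $\Phi$, including the degenerate identity $(x*_1 y)*_2 z = 0$ coming from the zero column $\Phi(e_2\otimes e_1) = 0$) and verify directly that $\{\cdot,\cdot\}$ satisfies Jacobi and that the two post-Lie axioms hold; the vanishing column is precisely what makes the second post-Lie axiom collapse correctly. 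The main obstacle is bookkeeping the Koszul-pairing conventions together with the opposite/symmetric identification of Remark \ref{remark5}(3), so that the "opposite" qualifier in the statement matches the nonsymmetric form of Proposition \ref{prop3.6}.
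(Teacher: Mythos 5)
Your proposal is correct and follows essentially the paper's own route: the paper obtains this corollary precisely by dualising the preceding proposition via Koszul duality (Proposition \ref{prop3.6} combined with the classical fact that the Koszul dual of $\mathbf{ComTriAs}$ is the post-Lie operad), and your computation of $\Phi=\Phi_\Omega^*$ agrees with the stated matrix. The direct verification you sketch as a sanity check also goes through: the four relations read off the columns of $\Phi$ give associativity of $*_2$, the relation $x*_1(y*_2z)=(x*_1y)*_1z-x*_1(y*_1z)$, the relation $x*_2(y*_1z)=(x*_2y)*_1z$, and the vanishing $(x*_1y)*_2z=0$ from the zero column $\Phi(e_2\otimes e_1)=0$, which together yield exactly the two post-Lie axioms for $\{x,y\}=x*_2y-y*_2x$ and $x*y=x*_1y$.
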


We conjecture that the associated operad morphism from the operad of post-Lie algebras into the operad of opposite 
$\Phi$-associative algebra is injective.

\subsection{Diassociative and dendriform algebras}

\begin{defi}\cite{Loday2001}
A diassociative algebra is a family $(A,\dashv,\vdash)$ where $A$ is a vector space and $\dashv$ and $\vdash$ are bilinear products
on $A$ such that for any $x,y,z\in A$:
\begin{align}
\label{eq2as1} (x\dashv y)\dashv z&=x\dashv (y\dashv z),\\
\label{eq2as1bis} (x\dashv y)\dashv z&=x\dashv (y\vdash z),\\
\label{eq2as2} (x\vdash y)\dashv z&=x\vdash (y\dashv z),\\
\label{eq2as3} (x\dashv y)\vdash z&=x\vdash (y\vdash z),\\
\label{eq2as4} (x\vdash y)\vdash z&=x\vdash (y\vdash z).
\end{align}
\end{defi}

\begin{prop}
Let $(A,\dashv,\vdash)$ be a diassociative algebra.
\begin{enumerate}
\item $(A,\dashv,\vdash)$ is an opposite $\Omega$-associative algebra, with the EAS laws:
\begin{align*}
&\begin{array}{|c||c|c|c|c|}
\hline \rightarrow&\dashv&\vdash\\
\hline\hline \dashv&\dashv&\vdash\\
\hline \vdash&\vdash&\vdash\\
\hline
\end{array}
&&\begin{array}{|c||c|c|c|c|}
\hline \triangleright&\dashv&\vdash\\
\hline\hline \dashv&\dashv&\dashv\\
\hline \vdash&\vdash&\vdash\\
\hline
\end{array}\:\mbox{ or }\:
\begin{array}{|c||c|c|c|c|}
\hline \triangleright&\dashv&\vdash\\
\hline\hline \dashv&\vdash&\dashv\\
\hline \vdash&\vdash&\vdash\\
\hline
\end{array}
\end{align*}
These EAS are  isomorphic to $\mathbf{C3}$ and $\mathbf{C6}$. 
\item $(A,\dashv,\vdash)$ is an $\Omega$-associative algebra, with the EAS laws:
\begin{align*}
&\begin{array}{|c||c|c|c|c|}
\hline \rightarrow&\dashv&\vdash\\
\hline\hline \dashv&\dashv&\dashv\\
\hline \vdash&\dashv&\vdash\\
\hline
\end{array}
&&\begin{array}{|c||c|c|c|c|}
\hline \triangleright&\dashv&\vdash\\
\hline\hline \dashv&\dashv&\dashv\\
\hline \vdash&\vdash&\dashv\\
\hline
\end{array}\:\mbox{ or }\:\begin{array}{|c||c|c|c|c|}
\hline \triangleright&\dashv&\vdash\\
\hline\hline \dashv&\dashv&\dashv\\
\hline \vdash&\vdash&\vdash\\
\hline
\end{array}
\end{align*}
These EAS are  isomorphic to $\mathbf{C6}$ and $\mathbf{C3}$. 
\end{enumerate}
\end{prop}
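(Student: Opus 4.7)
The plan is to handle each of the four claimed structures by the same two-step method: first confirm that the displayed pair $(\rightarrow, \triangleright)$ on $\Omega = \{\dashv, \vdash\}$ is indeed an EAS, then unfold the relevant associativity axiom over the pairs $(\alpha, \beta) \in \Omega^2$ and match the resulting identities against the five diassociative axioms.

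For the first step, rather than verify (\ref{EQ5})--(\ref{EQ7}) from scratch, I would exhibit an explicit bijection with a known two-element EAS from the classification table in Section 2.2. A direct inspection shows that the bijection $\vdash \leftrightarrow a$, $\dashv \leftrightarrow b$ identifies the two structures of part 1 with $\mathbf{C3}$ and $\mathbf{C6}$ respectively, while the bijection $\dashv \leftrightarrow a$, $\vdash \leftrightarrow b$ identifies the two structures of part 2 with $\mathbf{C6}$ and $\mathbf{C3}$. Since $\mathbf{C3}$ and $\mathbf{C6}$ appear in the classification table (certified as EAS there), the three EAS axioms transport along these bijections and no direct computation is needed.

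For the second step, I would expand the $\Omega$-associative axiom $x *_\alpha(y *_\beta z) = (x *_{\alpha \triangleright \beta} y) *_{\alpha \rightarrow \beta} z$ (respectively, the opposite axiom $x *_{\alpha \rightarrow \beta}(y *_{\alpha \triangleright \beta} z) = (x *_\beta y) *_\alpha z$) for each of the four pairs $(\alpha, \beta) \in \{\dashv, \vdash\}^2$, read off $\alpha \rightarrow \beta$ and $\alpha \triangleright \beta$ from the displayed table, and identify each resulting equation as one of (\ref{eq2as1})--(\ref{eq2as4}) or (\ref{eq2as1bis}). For example, in part 1 with the first $\triangleright$ table, the pairs $(\dashv,\dashv)$, $(\dashv,\vdash)$, $(\vdash,\dashv)$, $(\vdash,\vdash)$ reproduce (\ref{eq2as1}), (\ref{eq2as2}), (\ref{eq2as3}), (\ref{eq2as4}) respectively, with (\ref{eq2as1bis}) unused; the second EAS of part 1 instead uses (\ref{eq2as1bis}), (\ref{eq2as2}), (\ref{eq2as3}), (\ref{eq2as4}); in part 2 the two tables yield respectively (\ref{eq2as1}), (\ref{eq2as1bis}), (\ref{eq2as2}), (\ref{eq2as3}) and (\ref{eq2as1}), (\ref{eq2as1bis}), (\ref{eq2as2}), (\ref{eq2as4}). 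Each of the four structures therefore follows from four of the five diassociative axioms.

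The argument is really finite bookkeeping with no conceptual obstacle. The only care needed is to track which of the five diassociative axioms is unused in each of the four structures — this differs across the four cases and explains why two distinct $\triangleright$-tables appear in each part.
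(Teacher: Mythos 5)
Your proposal is correct and matches the paper's own (much terser) proof: the paper simply states that each structure is a reformulation of the same four subsets of the diassociative axioms that you identify, namely (\ref{eq2as1})--(\ref{eq2as4}), then (\ref{eq2as1bis}), (\ref{eq2as2})--(\ref{eq2as4}) for part 1, and (\ref{eq2as1}), (\ref{eq2as1bis}), (\ref{eq2as2}), (\ref{eq2as3}), then (\ref{eq2as1}), (\ref{eq2as1bis}), (\ref{eq2as2}), (\ref{eq2as4}) for part 2. Your explicit bijections $\vdash\leftrightarrow a$, $\dashv\leftrightarrow b$ (part 1) and $\dashv\leftrightarrow a$, $\vdash\leftrightarrow b$ (part 2) onto $\mathbf{C3}$ and $\mathbf{C6}$ correctly handle the isomorphism claim, which the paper leaves implicit.
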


\begin{proof}
\begin{enumerate}
\item This is a reformulation of axioms (\ref{eq2as1}), (\ref{eq2as2}), (\ref{eq2as3}) and (\ref{eq2as4}),
and of axioms (\ref{eq2as1bis}), (\ref{eq2as2}), (\ref{eq2as3}) and (\ref{eq2as4}).
\item This is a reformulation of axioms (\ref{eq2as1}), (\ref{eq2as1bis}), (\ref{eq2as2}) and (\ref{eq2as3}),
and of axioms (\ref{eq2as1}), (\ref{eq2as1bis}), (\ref{eq2as2}) and (\ref{eq2as4}). \qedhere
\end{enumerate}
\end{proof}

 Using Koszul duality, we obtain dendriform algebras: recall that a dendriform algebra is a family $(A,\prec,\succ)$ 
 where $A$ is a vector space and $\prec$ and $\succ$ are bilinear products on $A$ such that for any $x,y,z\in A$:
\begin{align*}
(x\prec y)\prec z&=x\prec(y\prec z+y\succ z),\\
(x\succ y)\prec z&=x\succ (y\prec z),\\
x\succ (y\succ z)&=(x\prec y+x\succ y)\succ z.
\end{align*}

\begin{cor}
\begin{enumerate}
\item Let $(V,\Phi)$ be one of the two following 2-dimensional $\ell$-EAS, 
where the matrix of $\Phi$ is expressed in the basis $(e_1\otimes e_1,e_1\otimes e_2,e_2\otimes e_1,e_2\otimes e_2)$:
\begin{align*}
&\begin{pmatrix}
1&0&0&0\\
0&0&0&0\\
0&1&0&0\\
0&0&1&1
\end{pmatrix},&&
\begin{pmatrix}
0&0&0&0\\
1&0&0&0\\
0&1&0&0\\
0&0&1&1
\end{pmatrix}.
\end{align*}
Then  any  $\Phi$-associative algebra is a dendriform algebra, with $\prec=*_1$ and $\succ=*_2$. 
\item Let $(V,\Phi)$ is one of the two following 2-dimensional $\ell$-EAS, 
where the matrix of $\Phi$ is expressed in the basis $(e_1\otimes e_1,e_1\otimes e_2,e_2\otimes e_1,e_2\otimes e_2)$:
\begin{align*}
&\begin{pmatrix}
1&0&0&0\\
1&0&0&0\\
0&1&0&0\\
0&0&1&0
\end{pmatrix},&&
\begin{pmatrix}
1&0&0&0\\
1&0&0&0\\
0&1&0&0\\
0&0&0&1
\end{pmatrix}.
\end{align*}
Then  any  opposite $\Phi$-associative algebra is a dendriform algebra, with $\prec=*_1$ and $\succ=*_2$. 
\end{enumerate}
\end{cor}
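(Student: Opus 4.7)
My plan is to deduce this corollary from the preceding proposition by the same Koszul duality mechanism that produced the post-Lie corollary in the previous subsection. The preceding proposition provides, for $\Omega$ isomorphic to $\mathbf{C3}$ or $\mathbf{C6}$, four operadic morphisms: its part~1 gives two morphisms $\as_\Phi' \to \mathbf{Dias}$ (realizing a diassociative algebra as an opposite $\Phi$-associative algebra for the linearization $\Phi$ of the EAS), and its part~2 gives two morphisms $\as_\Phi \to \mathbf{Dias}$.

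Next I would apply the Koszul duality functor to each of these four operadic morphisms. By Proposition~\ref{prop3.6} one has $(\as_\Phi')^! \cong \as_{\Phi^*}$ and $(\as_\Phi)^! \cong \as_{\Phi^*}'$, and the identification $\mathbf{Dias}^! \cong \mathbf{Dend}$ is classical. Since the Koszul duality functor is contravariant, the dualization produces four morphisms with source $\mathbf{Dend}$: two morphisms $\mathbf{Dend} \to \as_{\Phi^*}$ from part~1 of the previous proposition, and two morphisms $\mathbf{Dend} \to \as_{\Phi^*}'$ from its part~2. These say precisely that every $\Phi^*$-associative algebra (respectively every opposite $\Phi^*$-associative algebra) is dendriform, where the two canonical basis elements of $A^*$ correspond to $\prec$ and $\succ$.

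It remains to identify the four displayed matrices as the four relevant $\Phi^*$'s. Concretely, for each of the two EAS tables in each of the two parts of the previous proposition I would write out the matrix of the linearization~$\Phi$ and take its transpose, with the labeling of $e_1, e_2$ on the dual side chosen so that the Koszul dual products indeed match the identification $\prec = *_1$ and $\succ = *_2$. The main obstacle is really just this bookkeeping: keeping Koszul duality straight (it swaps $\Phi$-associative with opposite $\Phi$-associative via Proposition~\ref{prop3.6}) and selecting consistent basis orderings on the $A$ and $A^*$ sides so that the four matrices appear in the correct order between parts~1 and~2 of the corollary. A more down-to-earth alternative, which bypasses Koszul duality at the cost of tedium, is to verify directly that for each of the four displayed matrices the four induced (opposite) $\Phi$-associative axioms together imply the three dendriform axioms.
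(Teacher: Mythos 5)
Your proposal is correct and is essentially the paper's own (implicit) argument: the corollary is obtained by applying the contravariant Koszul duality functor to the four operad morphisms of the preceding proposition, using Proposition~\ref{prop3.6} (which swaps $\as_\Phi$ with $\as'_{\Phi^*}$) together with the classical identification $\mathbf{Dias}^!\cong\mathbf{Dend}$ with $\dashv^*=\prec$, $\vdash^*=\succ$, and then writing out the dual matrices. The transpose/basis bookkeeping you flag is indeed the only delicate point, and your fallback of verifying directly that each displayed $\Phi$'s four relations imply the three dendriform relations is a sound way to settle it.
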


\subsection{Triassociative and tridendriform algebras}

\begin{defi} \cite{Loday2004}
A triassociative algebra is a family $(A,\dashv,\vdash,\perp)$ where $A$ is a vector space and $\dashv$, $\vdash$ and $\perp$ are bilinear products on $A$ such that for any $x,y,z\in A$:
\begin{align}
\label{eq3as1} (x\dashv y)\dashv z&=x\dashv (y\dashv z),\\
\label{eq3as1bis} (x\dashv y)\dashv z&=x\dashv (y\vdash z),\\
\label{eq3as1ter}(x\dashv y)\dashv z&=x\dashv (y\perp z),\\
\label{eq3as2} (x\vdash y)\dashv z&=x\vdash (y\dashv z),\\
\label{eq3as3} (x\perp y)\dashv z&=x\perp (y\dashv z),\\
\label{eq3as4} (x\dashv y)\perp z&=x\perp (y\vdash z),\\ 
\label{eq3as5} (x\vdash y)\perp z&=x\vdash (y\perp z),\\
\label{eq3as6} (x\dashv y)\vdash z&=x\vdash (y\vdash z),\\
\label{eq3as6bis} (x\perp y)\vdash z&=x\vdash (y\vdash z),\\
\label{eq3as6ter} (x\vdash y)\vdash z&=x\vdash (y\vdash z).
\end{align}
\end{defi}

\begin{prop}
Let $(A,\dashv,\vdash,\perp)$ be a triassociative algebra.
\begin{enumerate}
\item $(A,\dashv,\vdash,\perp)$ is an opposite $\Omega$-associative algebra, with the EAS laws:
\begin{align*}
&\begin{array}{|c||c|c|c|c|c|}
\hline \rightarrow&\dashv&\vdash&\perp\\
\hline\hline \dashv&\dashv&\vdash&\perp\\
\hline \vdash&\vdash&\vdash&\vdash\\
\hline \perp&\perp&\vdash&\perp\\
\hline \end{array}
&&\begin{array}{|c||c|c|c|c|c|}
\hline \triangleright&\dashv&\vdash&\perp\\
\hline\hline \dashv&\dashv&\dashv&\dashv\\
\hline \vdash&\vdash&\vdash&\vdash\\
\hline \perp&\vdash&\perp&\perp\\
\hline \end{array}\:\mbox{ or }\:
\begin{array}{|c||c|c|c|c|c|}
\hline \triangleright&\dashv&\vdash&\perp\\
\hline\hline \dashv&\vdash&\dashv&\dashv\\
\hline \vdash&\vdash&\vdash&\vdash\\
\hline \perp&\vdash&\perp&\perp\\
\hline \end{array}\:\mbox{ or }\:
\begin{array}{|c||c|c|c|c|c|}
\hline \triangleright&\dashv&\vdash&\perp\\
\hline\hline \dashv&\perp&\dashv&\dashv\\
\hline \vdash&\vdash&\vdash&\vdash\\
\hline \perp&\vdash&\perp&\perp\\
\hline \end{array}
\end{align*}
\item $(A,\dashv,\vdash,\perp)$ is an $\Omega$-associative algebra, with the EAS laws:
\begin{align*}
&\begin{array}{|c||c|c|c|c|c|}
\hline \rightarrow&\vdash&\dashv&\perp\\
\hline\hline \vdash&\vdash&\dashv&\perp\\
\hline \dashv&\dashv&\dashv&\dashv\\
\hline \perp&\perp&\dashv&\perp\\
\hline \end{array}
&&\begin{array}{|c||c|c|c|c|c|}
\hline \triangleright&\vdash&\dashv&\perp\\
\hline\hline \vdash&\vdash&\vdash&\vdash\\
\hline \dashv&\dashv&\dashv&\dashv\\
\hline \perp&\dashv&\perp&\perp\\
\hline \end{array}\:\mbox{ or }\:
\begin{array}{|c||c|c|c|c|c|}
\hline \triangleright&\vdash&\dashv&\perp\\
\hline\hline \vdash&\dashv&\vdash&\vdash\\
\hline \dashv&\dashv&\dashv&\dashv\\
\hline \perp&\dashv&\perp&\perp\\
\hline \end{array}\:\mbox{ or }\:
\begin{array}{|c||c|c|c|c|c|}
\hline \triangleright&\vdash&\dashv&\perp\\
\hline\hline \vdash&\perp&\vdash&\vdash\\
\hline \dashv&\dashv&\dashv&\dashv\\
\hline \perp&\dashv&\perp&\perp\\
\hline \end{array}
\end{align*}
\end{enumerate}
\end{prop}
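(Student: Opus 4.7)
The approach is a direct finite verification for each of the six EAS structures proposed on $\Omega=\{\dashv,\vdash,\perp\}$. For each structure I proceed in two steps: first check that the triple $(\Omega,\rightarrow,\triangleright)$ indeed defines an EAS, then read off, for every pair $(\alpha,\beta)\in \Omega^2$, the identity to which the (opposite) $\Omega$-associative axiom reduces, and match it with one of the axioms of a triassociative algebra.

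For the EAS verification, axiom (\ref{EQ5}) is immediate once one observes that the three $\rightarrow$-tables of part 1 all encode the maximum for the total order $\dashv<\perp<\vdash$, while the three $\rightarrow$-tables of part 2 all encode the minimum for the same order; both operations are obviously associative. For (\ref{EQ6}) and (\ref{EQ7}) I would tabulate $\phi=(\rightarrow,\triangleright)$ on the nine pairs of $\Omega^2$ and then unfold the double and triple iterations appearing in each axiom. The key simplification is that, within each part of the statement, the three $\triangleright$-tables differ in a single row---the row of $\dashv$ in part 1 and the row of $\vdash$ in part 2---so the bulk of the $27$-triple check is common to the three options and only the varying row requires extra bookkeeping.

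For the matching step in part 1 I use the opposite axiom $x*_{\alpha\rightarrow\beta}(y*_{\alpha\triangleright\beta}z)=(x*_\beta y)*_\alpha z$. With the first $\triangleright$-table, the nine pairs taken in the order $(\dashv,\dashv),(\dashv,\vdash),(\dashv,\perp),(\vdash,\dashv),(\vdash,\vdash),(\vdash,\perp),(\perp,\dashv),(\perp,\vdash),(\perp,\perp)$ yield respectively (\ref{eq3as1}), (\ref{eq3as2}), (\ref{eq3as3}), (\ref{eq3as6}), (\ref{eq3as6ter}), (\ref{eq3as6bis}), (\ref{eq3as4}), (\ref{eq3as5}), and the associativity of $\perp$ (which is implicit in the definition of a triassociative algebra). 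Passing to the second or third $\triangleright$-table changes only the identity produced at $(\dashv,\dashv)$, which becomes respectively (\ref{eq3as1bis}) or (\ref{eq3as1ter}); all other slots give the same eight identities. Part 2 is handled analogously using the direct axiom $x*_\alpha(y*_\beta z)=(x*_{\alpha\triangleright\beta}y)*_{\alpha\rightarrow\beta}z$: the slot $(\vdash,\vdash)$ is the only one affected by the choice of $\triangleright$-table, and successively encodes (\ref{eq3as6ter}), (\ref{eq3as6}), and (\ref{eq3as6bis}), while the remaining eight slots produce the same identities across the three options and together exhaust the other triassociative axioms.

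The main obstacle will be the check of (\ref{EQ7}), which involves three iterations of $\triangleright$ and does not visibly inherit the max/min pattern enjoyed by $\rightarrow$. I expect it to account for the bulk of the mechanical work, but because the three $\triangleright$-tables within each part differ minimally and many of their rows are constant, no conceptual difficulty should arise beyond careful casework.
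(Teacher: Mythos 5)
Your plan is correct and is essentially the paper's own argument: the paper's proof consists of the single sentence that each case is ``a reformulation'' of the appropriate subset of axioms (\ref{eq3as1})--(\ref{eq3as6ter}), and your slot-by-slot matching of the nine pairs $(\alpha,\beta)$ to those axioms checks out exactly, including the observation that only the $(\dashv,\dashv)$ slot (part 1) resp.\ the $(\vdash,\vdash)$ slot (part 2) varies across the three $\triangleright$-tables; the extra verification that the six tables satisfy (\ref{EQ5})--(\ref{EQ7}) is not in the paper but is a reasonable addition, and your max/min observation for $\rightarrow$ under the order $\dashv<\perp<\vdash$ is accurate. The one point to state more carefully is the $(\perp,\perp)$ slot: it requires $(x\perp y)\perp z=x\perp(y\perp z)$, which belongs to the standard (Loday--Ronco) list of eleven triassociative axioms but is absent from the ten axioms printed in the paper's definition, so it is not ``implicit'' in the definition as written --- you should either cite it as the omitted eleventh axiom or note the omission explicitly, since without it the correspondence does not close.
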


\begin{proof}
\begin{enumerate}
\item This is a reformulation of axioms((\ref{eq3as1})  or (\ref{eq3as1bis}) or  (\ref{eq3as1ter}))
and (\ref{eq3as2}) -- (\ref{eq3as6ter}).  
 \item This is a reformulation of axioms (\ref{eq3as1}) -- (\ref{eq3as5}),
and ((\ref{eq3as6}) or   (\ref{eq3as6bis}) or  (\ref{eq3as6ter})).  \qedhere
\end{enumerate}
\end{proof}

Using Koszul duality, we obtain tridendriform algebras \cite{Loday2001,Chapoton2000,Novelli2006},
that is to say families $(A,\prec,\succ,\cdot)$ where $A$ is a vector space and $\prec$, $\succ$ and $\cdot$ 
are bilinear products on $A$ such that for any $x,y,z\in A$:
\begin{align*}
(x\prec y)\prec z&=x\prec (y\prec z+y\succ z+y\cdot z),\\
(x\succ y)\prec z&=x\succ (y\prec z),\\
 x\succ (y\succ z)&=(x\prec  y+x\succ y+x\cdot y)\succ z,\\
(x\succ y)\cdot z&=x\succ (y\cdot z),\\
(x\prec y)\cdot z&=x\cdot (y\succ z),\\
(x\cdot y)\prec z&=x\cdot (y\prec z),\\
(x\cdot y)\cdot z&=x\cdot (y\cdot z).
\end{align*}

\begin{cor}
Let $(V,\Phi)$ is one of the three following 3-dimensional $\ell$-EAS, 
where the matrix of $\Phi$ is expressed in the basis $(e_1\otimes e_1,e_1\otimes e_2,e_1\otimes e_3, e_2\otimes e_1,e_2\otimes e_2,
e_2\otimes e_3,e_3\otimes e_1,e_3\otimes e_2,e_3\otimes e_3)$:
\begin{align*}
&\begin{pmatrix}
1&0&0&0&0&0&0&0&0\\
0&0&0&0&0&0&1&0&0\\
0&0&0&0&1&0&0&0&0\\
0&0&0&0&1&0&0&0&0\\
0&0&0&0&1&0&0&0&0\\
0&0&0&0&0&0&0&1&0\\
0&0&0&0&0&1&0&0&0\\
0&0&0&0&0&0&0&0&1
\end{pmatrix},&&
\begin{pmatrix}
0&1&0&0&0&0&0&0&0\\
0&0&0&0&0&0&1&0&0\\
0&0&0&0&1&0&0&0&0\\
0&0&0&0&1&0&0&0&0\\
0&0&0&0&1&0&0&0&0\\
0&0&0&0&0&0&0&1&0\\
0&0&0&0&0&1&0&0&0\\
0&0&0&0&0&0&0&0&1
\end{pmatrix},&&
\begin{pmatrix}
0&0&1&0&0&0&0&0&0\\
0&0&0&0&0&0&1&0&0\\
0&0&0&0&1&0&0&0&0\\
0&0&0&0&1&0&0&0&0\\
0&0&0&0&1&0&0&0&0\\
0&0&0&0&0&0&0&1&0\\
0&0&0&0&0&1&0&0&0\\
0&0&0&0&0&0&0&0&1
\end{pmatrix}.
\end{align*}
Then  any  $\Phi$-associative algebra is a tridendriform algebra, with $\prec=*_1$, $\succ=*_2$ and $\perp=*_3$. 
Any opposite $\Phi$-associative algebra is a tridendriform algebra, with $\prec=*_2$, $\succ=*_1$ and $\perp=*_3$. 
\end{cor}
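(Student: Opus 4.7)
The plan is to deduce the corollary directly from the previous proposition via the Koszul duality machinery of Proposition \ref{prop3.6} and Theorem \ref{theo3.7}, together with the classical fact (Loday--Ronco) that the operad of tridendriform algebras is the Koszul dual of the operad of triassociative algebras. The starting observation is that the previous proposition provides, for each of the three EAS structures in its first part, an operad morphism from the operad of triassociative algebras into the operad $\as'_{\Phi_i}$ of opposite $\Phi_i$-associative algebras, where $\Phi_i$ is the linearization of the $i$-th EAS. Dualizing these three morphisms, and invoking Proposition \ref{prop3.6} which identifies the Koszul dual of $\as'_{\Phi_i}$ with $\as_{\Phi_i^*}$, one obtains morphisms from $\as_{\Phi_i^*}$ into the operad of tridendriform algebras. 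This already establishes the first assertion of the corollary at the conceptual level: any $\Phi_i^*$-associative algebra carries a tridendriform structure, and under the duality $\dashv \leftrightarrow \prec$, $\vdash \leftrightarrow \succ$, $\perp \leftrightarrow \perp$ between the triassociative basis and its dual tridendriform basis, the $\Phi_i^*$-product $*_j$ becomes the $j$-th tridendriform operation in the order $(\prec,\succ,\perp)$.

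The second assertion follows by the same argument applied to the second family of three EAS from the previous proposition (the $\Omega$-associative interpretation of triassociative algebras), using the other half of Proposition \ref{prop3.6}, namely that the Koszul dual of $\as_\Phi$ is $\as'_{\Phi^*}$. Alternatively, one may invoke Remark \ref{remark5}: passing from $*$ to $*^{op}$ exchanges $\Phi$-associative and opposite $\Phi$-associative algebras, and on the tridendriform side this exchange corresponds precisely to swapping the roles of $\prec$ and $\succ$ (while preserving $\perp$), explaining why the same three matrices $\Phi_i^*$ govern both the $\Phi_i^*$-associative and the opposite $\Phi_i^*$-associative interpretations, with the index conventions of the statement.

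The only remaining task is the verification step: computing explicitly the matrix of $\Phi_i^*$ in the ordered basis $(e_1\otimes e_1,\ldots,e_3\otimes e_3)$ of $A\otimes A$ for each of the three EAS and comparing with the three matrices in the statement. This is carried out by reading off, for each EAS, the values of $\rightarrow$ and $\triangleright$ on the basis $\{\dashv,\vdash,\perp\}$ identified with $\{e_1,e_2,e_3\}$, writing down the $9\times 9$ matrix of $\Phi_i$ whose $(\alpha,\beta)$-column is the indicator of $\phi(\alpha,\beta)\in \Omega^2$, and transposing. The three matrices obtained differ only in their first column, which reflects precisely the three possible values of $\dashv\triangleright\dashv\in\{\dashv,\vdash,\perp\}$ among the three EAS.

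The main (and essentially the only) obstacle is the bookkeeping in this matrix verification, since one must carefully align the index convention $\{\prec,\succ,\perp\}\leftrightarrow \{*_1,*_2,*_3\}$ on the tridendriform side with the dual basis convention on the triassociative side in order to match the statement exactly. Once the matrices are matched, no further argument is needed: Koszul duality, already established for $\as_\Phi$ and $\as'_\Phi$ in Theorem \ref{theo3.7}, takes care of producing the desired tridendriform structure from the $\Phi_i^*$-associativity axiom.
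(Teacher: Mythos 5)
Your proposal follows exactly the paper's intended route: the paper gives no separate argument for this corollary beyond the sentence ``Using Koszul duality, we obtain tridendriform algebras'', i.e.\ the corollary is obtained by dualizing the preceding proposition through Proposition~\ref{prop3.6} (and Theorem~\ref{theo3.7}) together with the classical identification of the tridendriform operad as the Koszul dual of the triassociative operad, which is precisely what you do, including the correct observation that the three matrices differ only in the image of $e_1\otimes e_1$ because the three EAS differ only in the value of $\dashv\triangleright\dashv$. The one piece of substantive content, namely transposing the three linearized EAS matrices and matching them to the displayed ones, is described but not carried out in your write-up; this matches the level of detail of the paper itself (and, for what it is worth, the displayed $9\times 9$ matrices have only eight printed rows, so a row is missing in the source and a literal match is impossible as printed).
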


\subsection{Dual duplicial and duplicial algebras}

\begin{defi}\cite{LodayEncyclopedia}
A dual duplicial algebra is a family $(A,\prec,\succ)$ where $A$ is a vector space and $\prec$ and $\succ$ are bilinear products
on $A$ such that for any $x,y,z\in A$:
\begin{align}
\label{eqdup1}(x\prec y)\prec z&=x\prec (y\prec z),\\
\label{eqdup2} (x\prec x)\succ z&=0,\\
\label{eqdup3} (x\succ y)\prec z&=x\succ (y\prec z),\\
\label{eqdup4} 0&=x\prec (y\succ z),\\
\label{eqdup5} (x\succ y)\succ z&=x\succ (y\succ z).
\end{align}
\end{defi}

\begin{prop}
Let $(V,\Phi)$be the following 2-dimensional $\ell$-EAS, 
where the matrix of $\Phi$ is expressed in the basis $(e_1\otimes e_1,e_1\otimes e_2,e_2\otimes e_1,e_2\otimes e_2)$:
\begin{align*}
&\begin{pmatrix}
1&0&0&0\\
0&0&0&0\\
0&1&0&0\\
0&0&0&1
\end{pmatrix}.
\end{align*}
Then any dual duplicial algebra $(A,\prec,\succ)$ is an opposite $\Phi$-associative algebra, with $*_1=\prec$ and $*_2=\succ$,
and a$\Phi$-associative algebra, with $*_1=\succ$ and $*_2=\prec$.
\end{prop}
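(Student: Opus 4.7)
The plan is to verify the two claims by decoding $\Phi$ on each basis tensor of $V\otimes V$ and then checking the defining axiom of (opposite) $\Phi$-associative algebra separately for every one of the four pairs $(a,b)\in\{e_1,e_2\}^2$. Reading off the matrix gives
\[
\Phi(e_1\otimes e_1)=e_1\otimes e_1,\quad \Phi(e_1\otimes e_2)=e_2\otimes e_1,\quad \Phi(e_2\otimes e_1)=0,\quad \Phi(e_2\otimes e_2)=e_2\otimes e_2,
\]
so in Sweedler notation the only nonzero contributions to $\sum a'\rightarrow b'\otimes a''\triangleright b''$ come from the three pairs $(e_1,e_1)$, $(e_1,e_2)$ and $(e_2,e_2)$. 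The pair $\Phi(e_2\otimes e_1)=0$ will be what produces the two ``killing'' relations (\ref{eqdup2}) and (\ref{eqdup4}). Since this $(V,\Phi)$ already appears in the list of $\ell$EAS examples in Section~3.1, no separate verification of the hexagon identity is needed.

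First I would check the opposite $\Phi$-associative axiom
\[
\sum x*_{a'\rightarrow b'}\bigl(y*_{a''\triangleright b''}z\bigr)=(x*_b y)*_a z,
\]
with $*_1=\prec$ and $*_2=\succ$. Running through the four cases and using the values of $\Phi$ just listed, the identities obtained are, respectively, $x\prec(y\prec z)=(x\prec y)\prec z$, $x\succ(y\prec z)=(x\succ y)\prec z$, $0=(x\prec y)\succ z$, and $x\succ(y\succ z)=(x\succ y)\succ z$; these are exactly (\ref{eqdup1}), (\ref{eqdup3}), (\ref{eqdup2}) and (\ref{eqdup5}). Axiom (\ref{eqdup4}) is not needed here, so any dual duplicial algebra automatically satisfies the opposite $\Phi$-associative relations.

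Then I would repeat the exercise for the $\Phi$-associative axiom
\[
x*_a(y*_b z)=\sum(x*_{a''\triangleright b''}y)*_{a'\rightarrow b'}z,
\]
this time with $*_1=\succ$ and $*_2=\prec$. The same four cases now yield $x\succ(y\succ z)=(x\succ y)\succ z$, $x\succ(y\prec z)=(x\succ y)\prec z$, $x\prec(y\succ z)=0$, and $x\prec(y\prec z)=(x\prec y)\prec z$, i.e.\ (\ref{eqdup5}), (\ref{eqdup3}), (\ref{eqdup4}) and (\ref{eqdup1}); axiom (\ref{eqdup2}) is unused. Between the two verifications, all five dual duplicial relations are consumed, which is precisely why both structures coexist on the same data.

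The argument is a pure case check; the only point requiring attention is the bookkeeping of which factor in $\Phi(a\otimes b)$ indexes the outer product and which indexes the inner product, since this convention flips between the $\Phi$-associative and opposite $\Phi$-associative axioms, and flipping $(*_1,*_2)$ between $(\prec,\succ)$ and $(\succ,\prec)$ compensates for exactly this swap.
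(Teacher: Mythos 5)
Your proposal is correct and follows essentially the same route as the paper: the paper's one-line proof ("this is a reformulation of axioms (\ref{eqdup1})--(\ref{eqdup3}) and (\ref{eqdup5}), and of axioms (\ref{eqdup1}) and (\ref{eqdup3})--(\ref{eqdup5})") is exactly your case-by-case match-up, with the vanishing $\Phi(e_2\otimes e_1)=0$ accounting for the two zero relations. Your explicit bookkeeping of which of the four pairs $(a,b)$ produces which axiom, and which axiom is unused in each of the two verifications, agrees with the paper's tallies.
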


\begin{proof}
This is a reformulation of axioms (\ref{eqdup1}) --  (\ref{eqdup3}) and (\ref{eqdup5}),
and of axioms  (\ref{eqdup1}) and (\ref{eqdup3})  -- (\ref{eqdup5}).
\end{proof}

Using Koszul duality, we recover duplicial algebra \cite{Loday2008}, that is to say families $(A,\prec,\succ)$ 
 where $A$ is a vector space and $\prec$ and $\succ$ are bilinear products on $A$ such that for any $x,y,z\in A$:
\begin{align*}
(x\prec y)\prec z&=x\prec(y\prec z),\\
(x\succ y)\prec z&=x\succ (y\prec z),\\
x\succ (y\succ z)&=(x\succ y)\succ z.
\end{align*}

\begin{cor}
Let $(V,\Phi)$be the following 2-dimensional $\ell$-EAS, 
where the matrix of $\Phi$ is expressed in the basis $(e_1\otimes e_1,e_1\otimes e_2,e_2\otimes e_1,e_2\otimes e_2)$:
\begin{align*}
&\begin{pmatrix}
1&0&0&0\\
0&0&1&0\\
0&0&0&0\\
0&0&0&1
\end{pmatrix}.
\end{align*}
Then any $\Phi$-associative algebra is a duplicial algebra, with  $*_1=\prec$ and $*_2=\succ$,
and any opposite $\Phi$-associative algebra is a duplicial algebra, with $*_1=\succ$ and $*_2=\prec$.
\end{cor}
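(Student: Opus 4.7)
The plan is to verify the three duplicial axioms directly by unpacking the $\Phi$-associative axiom (respectively its opposite) on each of the four pairs of basis elements of $A\otimes A$; this is tractable since $A$ is only two-dimensional.

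First I would read off $\Phi$ column by column from the given matrix, obtaining
\[\Phi(e_1\otimes e_1)=e_1\otimes e_1,\quad \Phi(e_1\otimes e_2)=0,\quad \Phi(e_2\otimes e_1)=e_1\otimes e_2,\quad \Phi(e_2\otimes e_2)=e_2\otimes e_2.\]
For the first assertion, setting $*_1=\prec$ and $*_2=\succ$, I would substitute each pair $(a,b)\in\{e_1,e_2\}^2$ into the defining identity $x*_a(y*_bz)=\sum(x*_{a''\triangleright b''}y)*_{a'\rightarrow b'}z$. The pair $(e_1,e_1)$ yields $x\prec(y\prec z)=(x\prec y)\prec z$; the pair $(e_2,e_1)$ yields $x\succ(y\prec z)=(x\succ y)\prec z$; the pair $(e_2,e_2)$ yields $x\succ(y\succ z)=(x\succ y)\succ z$. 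These are precisely the three defining relations of a duplicial algebra. The remaining pair $(e_1,e_2)$ produces the extra relation $x\prec(y\succ z)=0$, which is not a duplicial axiom but is manifestly compatible with them.

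For the opposite assertion, with $*_1=\succ$ and $*_2=\prec$, I would carry out the analogous substitutions in the opposite axiom $\sum x*_{a'\rightarrow b'}(y*_{a''\triangleright b''}z)=(x*_by)*_az$. The pair $(e_1,e_1)$ now produces $x\succ(y\succ z)=(x\succ y)\succ z$; the pair $(e_2,e_1)$ produces $x\succ(y\prec z)=(x\succ y)\prec z$; the pair $(e_2,e_2)$ produces $x\prec(y\prec z)=(x\prec y)\prec z$; so the three duplicial axioms are recovered once more, together with the extra relation $(x\prec y)\succ z=0$ coming from $(e_1,e_2)$.

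There is no substantive obstacle: in both parts the argument is a purely mechanical case check over four pairs. The only care required is reading the matrix column by column and keeping straight which of $*_1,*_2$ is identified with $\prec$ in each part. As a sanity check, the displayed matrix is the transpose of the matrix of the dual duplicial $\ell$EAS of the preceding proposition, in agreement with the Koszul duality $\as_\Phi^!\cong\as_{\Phi^*}'$ of Proposition~\ref{prop3.6}.
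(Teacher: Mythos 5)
Your verification is correct: reading the matrix column by column gives $\Phi(e_1\otimes e_1)=e_1\otimes e_1$, $\Phi(e_1\otimes e_2)=0$, $\Phi(e_2\otimes e_1)=e_1\otimes e_2$, $\Phi(e_2\otimes e_2)=e_2\otimes e_2$, and substituting the four pairs into the $\Phi$-associative axiom (resp.\ its opposite) does produce exactly the three duplicial relations plus the additional relation $x\prec(y\succ z)=0$ (resp.\ $(x\prec y)\succ z=0$), so every ($\Phi$- or opposite $\Phi$-)associative algebra is in particular duplicial. The paper itself gives no written proof of this corollary; it is obtained by Koszul duality from the preceding proposition on dual duplicial algebras, via Proposition \ref{prop3.6}: the operad morphism $\as_{\Phi'}'\to \mathbf{DualDup}$ dualizes to $\mathbf{Dup}=\mathbf{DualDup}^!\to \as_{(\Phi')^*}$, and $(\Phi')^*$ is precisely the transposed matrix displayed here --- the observation you relegate to a sanity check. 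So your route is genuinely different (a direct four-case computation rather than dualization), and it buys something the duality argument hides: it exhibits the extra relation explicitly, making it transparent that $\as_\Phi(3)$ has dimension $4$ while the duplicial operad has dimension $5$ in arity $3$, i.e.\ that the morphism $\mathbf{Dup}\to\as_\Phi$ is a strict quotient. The only point needing care, which you handle correctly, is the orientation of the axiom: in $x*_a(y*_bz)=\sum(x*_{a''\triangleright b''}y)*_{a'\rightarrow b'}z$ the \emph{first} tensor factor of $\Phi(a\otimes b)$ labels the outer product on the right-hand side.
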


\bibliographystyle{amsplain}
\addcontentsline{toc}{section}{References}
\bibliography{biblio}

\end{document}